\def\thm@space@setup{%
  \thm@preskip=\parskip \thm@postskip=0pt
}
\theoremstyle{definition}
\newtheorem{de}{Definition}
\theoremstyle{plain}
\newtheorem{theo}[de]{Theorem}
\newtheorem{pro}[de]{Proposition}
\newtheorem{co}[de]{Corollary}
\newtheorem{re}[de]{Remark}
\theoremstyle:=definition,remark,plain\do{%
        \expandafter\g@addto@macro\csname th@\theoremstyle\endcsname{%
            \addtolength\thm@preskip\parskip
            }%
        }
\newcommand{\vungoc}{V\~u Ng\d{o}c }
\newcommand{\nff}{{n_{\text{FF}}}}
\newcommand{\dee}{\mathrm{d}}
\newcommand{\R}{\mathbb{R}}
\newcommand{\mbS}{\mathbb{S}}
\newcommand{\T}{\mathbb{T}} % torus
\newcommand{\Z}{\mathbb{Z}}
\newcommand{\al}{\alpha}
\newcommand{\be}{\beta}
\newcommand{\symbga}{\gamma}
\newcommand{\ze}{\zeta}
\newcommand{\lam}{\lambda}
\newcommand{\om}{\omega}
\newcommand{\De}{\Delta}
\newcommand{\Lam}{\Lambda}
\newcommand{\Om}{\Omega}
\newcommand{\pti}{{\tilde{p}}}
\newcommand{\qti}{{\widetilde{q}}}
\newcommand{\mcA}{\mathcal A}
\newcommand{\mcB}{\mathcal B}
\newcommand{\mcF}{\mathcal F}
\newcommand{\mcH}{\mathcal H}
\newcommand{\mcL}{\mathcal L}
\newcommand{\mcN}{\mathcal N}
\newcommand{\mcP}{\mathcal P}
\newcommand{\mcX}{\mathcal X}
\title{\textbf{The height invariant of a four-parameter semitoric system with two focus-focus singularities}}
\author{Jaume Alonso \and Sonja Hohloch}
\date{\small 27th June 2020}
\begin{document}

\maketitle

\begin{abstract}
Semitoric systems are a special class of completely integrable systems with two degrees of freedom that have been symplectically classified by Pelayo and \vungoc about a decade ago in terms of five symplectic invariants. If a semitoric system has several focus-focus singularities, then some of these invariants have multiple components, one for each focus-focus singularity. Their computation is not at all evident, especially in multi-parameter families. In this paper, we consider a {\em four-parameter} family of semitoric systems with {\em two} focus-focus singularities. In particular, apart from the polygon invariant, we compute the so-called {\em height invariant}. Moreover, we show that the two components of this invariant encode the symmetries of the system in an intricate way.
\end{abstract}

%%%%%%%%%%%%%%%%%%%%%%%%%%%%%%%%%%%%%%%%%%%%%%%%%%%%%%%%%%%%%%%%%%%%%%%%%
%%%%%%%%%%%%%%  new section  %%%%%%%%%%%%%%%%%%%%%%%%%%%%%%%%%%%%%%%%%%%%
%%%%%%%%%%%%%%%%%%%%%%%%%%%%%%%%%%%%%%%%%%%%%%%%%%%%%%%%%%%%%%%%%%%%%%%%%%%

\section{Introduction}

In the last decades, various efforts have been made towards the construction of classifications within the theory of completely integrable dynamical systems. These classifications are based on invariants that capture various aspects of a system with respect to different notions of equivalence. They are useful for two main reasons: they give an overview of all possible systems within a certain class and allow us to distinguish between non-equivalent systems. If we restrict ourselves to classifications of {\em symplectic} type, important accomplishments are the classification of toric systems, due to Delzant \cite{Del}, Atiyah \cite{At} and Guillemin $\&$ Sternberg \cite{GS} and the classification of semitoric systems, due to Pelayo $\&$ \vungoc \cite{PV1, PV4} and recently extended by Palmer \emph{et al.}\ \cite{PPT}. Another significant result in this line is the symplectic classification of completely integrable systems using characteristic classes, introduced by Zung \cite{Z2}.

Semitoric systems are a class of dynamical systems defined on connected four-dimensional symplectic manifolds, introduced by \vungoc \cite{Vu2}. They are integrable systems, so they have two conserved quantities, one of which is a proper map that induces an effective circle action. Moreover, all singularities are required to be non-degenerate and must not have hyperbolic components. From a {\em topological} point of view, these systems can be described using the theory of singular Lagrangian fibrations, cf.\ Bolsinov $\&$ Fomenko \cite{BF}.

From the {\em symplectic} point of view, one of the motivations to study semitoric systems comes from the analysis of systems with monodromy in the quantum physics and chemistry literature, see for example Child \emph{et.\ al.\ }\cite{CWT}, Sadovksii $\&$ Zhilinskii \cite{SZ} for a theoretical approach and Ass\'emat \emph{et.\ al.\ }\cite{AEJ}, Fitch \emph{et.\ al.\ }\cite{FWP}, Winnewisser \emph{et.\ al.\ }\cite{WWM} for experimental studies.

In this setting, one has the joint spectrum of a set of unknown quantum operators and wants to recover information about the system. An overview of the possible candidate systems can be obtained by means of a classification. Since classical systems are generally easier to understand, one can make use of Bohr's correspondence principle or \emph{Zauberstab} and focus on constructing a classification for classical systems. However, in order for the results to be valid after quantisation, it is important that this classsification preserves the symplectic structure, cf.\ Pelayo \cite{Pe} for more details on this approach.

Two foundational examples of the semitoric systems theory are the coupled spin-oscillator and the coupled angular momenta. The first one is a particular case of the Jaynes-Cummings \cite{JC} model  from quantum optics and it consists of the coupling of a classical spin on the two-sphere $\mbS^2$ with a harmonic oscillator on the plane $\R^2$, see e.g.\ Pelayo $\&$ \vungoc \cite{PV3}. The second one is the classical version of the addition of two quantum angular momenta, defined on the product of two copies of $\mbS^2$. It models, for example, the reduced Hamiltonian of a hydrogen-like atom in the presence of parallel electric and magnetic fields, cf.\ Sadovskii \emph{et al.}\ \cite{SZM}. In the last years, several other examples of semitoric systems have been discovered: Hohloch $\&$ Palmer \cite{HP} introduced a family with two focus-focus points, Le Floch $\&$ Palmer \cite{LFPa} proved the existence of examples in all Hirzebruch surfaces and De Meulenaere $\&$ Hohloch \cite{DMH2} proposed a system with four focus-focus points that has double pinched focus-focus fibres for a certain value of the parameter.  

The classification of semitoric systems is based on five symplectic invariants: the number of focus-focus points, the polygon invariant, the height invariant, the Taylor series invariant and the twisting index invariant.

The survey article by Alonso $\&$ Hohloch \cite{AH} gives an overview of the state of the art concerning examples and computations of invariants reached in 2019. Note that the computation of these invariants is far from trivial, especially if the aim is to make a general calculation of the invariants for a {\em whole family} of systems {\em depending on several parameters}, instead of for only one explicitly given, concrete system.

So far, the {\em full list} of invariants has only been computed for the {\em two foundational examples}. The computation of the invariants in these two cases is based on the use of the properties of elliptic integrals. In the case of the coupled spin-oscillator, it was initiated by Pelayo $\&$ \vungoc \cite{PV3} and completed by Alonso $\&$ Dullin $\&$ Hohloch \cite{ADH}. In this case, two parameters are taken into account, but the dependence is quite simple. For the coupled angular momenta, it was initiated by Le Floch $\&$ Pelayo \cite{LFPe} and completed by Alonso $\&$ Dullin $\&$ Hohloch \cite{ADH2}. In this case, the dependence is of three parameters and significantly more involved.

Expressing the invariants as a function of the parameters of the system is important because, besides the quantitative results, it also allows for qualitative considerations. For instance, one can compare the roles played by \emph{geometric parameters}, i.e.\ those related to the symplectic manifold, and by \emph{coupling parameters}, i.e.\ those only appearing in the momentum map. In case some parameters also affect the type of singularities, for example making focus-focus singularities appear and disappear, one can also see what happens to the invariants as the critical values of the parameters are approached.

In both foundational examples, the invariants display the symmetries of the systems. Moreover, for the coupled angular momenta, the terms of the Taylor series invariant go to infinity as the coupling parameter approaches the critical values. However, a limitation of these examples is that the number of focus-focus points is at most one. Semitoric systems with more than one focus-focus point are interesting because, in this case, the symplectic invariants have multiple components, one for each focus-focus point. So the different components can (and should) be compared with each other. In particular, it is interesting to see how the different components depend on the parameters of the system and how they reflect the possible symmetries of the system. 

Note that the presence of multiple focus-focus points increases the complexity of the computations significantly. So far, the only results in this direction are the computation of the polygon invariant and the height invariant of two families of systems with a relatively simple dependence on two parameters, cf.\ Le Floch $\&$ Palmer \cite{LFPa}. There is, in general, a certain trade-off between, on the one hand, the qualitative richness of having the invariants expressed as functions of several parameters and, on the other hand, the feasability of their computations.

In the present paper we choose the former option, i.e., focusing on dependence on multiple parameters. We managed to compute the number of focus-focus points invariant, the polygon invariant, and the height invariant, but, given the number of parameters, the computational complexity of the Taylor series invariant and the twisting index invariant is beyond current computational methods and resources.

Let $(M,\om)$ be the symplectic manifold $M=\mbS^2 \times \mbS^2$ with symplectic form $\omega = - (R_1\, \omega_{\mbS^2}  \oplus  R_2\, \omega_{\mbS^2})$, where $\omega_{\mbS^2}$ is the standard symplectic form of the unit sphere and $0<R_1 <R_2$ or $0< R_2< R_1$. 
Given $(x_1,y_1,z_1, x_2,y_2,z_2)$ Cartesian coordinates in $M$ and $s_1, s_2 \in [0,1]$, we consider the integrable system $(M,\om,F)$, where $F:=(L,H)$ is defined by
\begin{equation}
\begin{cases}
L(x_1,y_1,z_1,x_2,y_2,z_2)\;:=R_1 z_1  + R_2 z_2, \\
H(x_1,y_1,z_1,x_2,y_2,z_2):=(1-2s_1)(1-s_2) z_1 + (1-2s_1)s_2 z_2 
\\ \hspace{4.4cm}+ 2(s_1+s_2-{s_1}^2 - {s_2}^2) (x_1x_2 + y_1y_2).\\
\end{cases}
\label{sysdefintro}
\end{equation}

It is a family of semitoric systems that can have up to two focus-focus singularities and depends on four parameters in total, two geometric parameters $R_1,R_2>0,$ $R_1 \neq R_2$ and two coupling parameters $s_1,s_2 \in [0,1]$. 

Our first result is the computation of the number of focus-focus singularities:

\begin{theo} The number of focus-focus points invariant of system \eqref{sysdefintro} is $\nff =0$ if $E>0$ and $\nff=2$ if $E<0$, where \begin{equation*}
\begin{aligned}
E=& {R_2}^2 (1 - 2 {s_1})^2 (-1 + {s_2})^2 + {R_1}^2 (1 - 2 {s_1})^2 {s_2}^2 - 
 2 {R_1} {R_2} (8 (-1 + {s_1})^2 {s_1}^2  \\&+ {s_2} - 
    12 (-1 + {s_1}) {s_1} {s_2} + (7 + 12 (-1 + {s_1}) {s_1}) {s_2}^2 - 16 {s_2}^3 + 
    8 {s_2}^4).
\end{aligned}
\end{equation*} If $E=0$, the system fails to be semitoric. 
\label{th:nffintro}
\end{theo}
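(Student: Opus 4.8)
The plan is to reduce the classification of the rank-zero singularities to a $2\times2$ eigenvalue problem and then to recognise the sign of $E$ as the sign of the discriminant of that problem. First I would locate the rank-zero points. A point is rank-zero for $F=(L,H)$ precisely when $\dee L$ and $\dee H$ both vanish, and since $L=R_1z_1+R_2z_2$ has $\dee L=R_1\,\dee z_1+R_2\,\dee z_2$, which vanishes only where $z_i\in\{\pm1\}$ on each factor, the only candidates are the four pole pairs $(\eps_1,\eps_2)$ with $\eps_i\in\{\pm1\}$. At each such pair the coupling term $x_1x_2+y_1y_2$ and its differential vanish (all $x_i,y_i$ are zero there), so $\dee H=0$ as well; hence the rank-zero set is exactly these four points, and the focus-focus points must be sought among them.

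Next I would linearise at each pole. Near $(\eps_1,\eps_2)$ I would use $(x_i,y_i)$ as local coordinates via $z_i\approx\eps_i-\tfrac{\eps_i}{2}(x_i^2+y_i^2)$, with $\om\approx-R_1\eps_1\,\dee x_1\wedge\dee y_1-R_2\eps_2\,\dee x_2\wedge\dee y_2$; the orientation flip at the south poles (the $\eps_i$ in $\om$) is the decisive bookkeeping. Since $L$ generates the simultaneous rotation of both spheres, passing to $w_j=x_j+iy_j$ diagonalises this circle action and turns the linearised flow of $\lambda L+\mu H$ into a $2\times2$ complex system $\dot w=iM(\lambda,\mu)w$ with $M(\lambda,\mu)=\left(\begin{smallmatrix}-\lambda-\mu a_0/R_1 & \mu c_0\eps_1/R_1\\[2pt]\mu c_0\eps_2/R_2 & -\lambda-\mu b_0/R_2\end{smallmatrix}\right)$, where $a_0=(1-2s_1)(1-s_2)$, $b_0=(1-2s_1)s_2$ and $c_0=2(s_1+s_2-s_1^2-s_2^2)$. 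A short computation gives the discriminant of the characteristic polynomial of $M(\lambda,\mu)$ as $\mu^2\De_0$ with $\De_0=(a_0/R_1-b_0/R_2)^2+4c_0^2\eps_1\eps_2/(R_1R_2)$; note that the individual factors $\eps_i$ cancel in $a_0,b_0$, so only $\eps_1\eps_2$ survives.

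The eigenvalues of the linearised real flow are obtained by multiplying those of $M(\lambda,\mu)$ by $\pm i$: real eigenvalues of $M$ yield a purely imaginary spectrum (elliptic-elliptic), whereas a complex pair $p\pm iq$ yields the quadruple $\pm q\pm ip$, which is focus-focus when $p,q\neq0$, realised by a generic combination when $\De_0<0$; when $\De_0=0$ every combination has a repeated, non-semisimple root and the singularity is degenerate. At the aligned poles $\eps_1\eps_2=+1$ forces $\De_0>0$ (a sum of squares plus a positive term), so they are always elliptic-elliptic; the two mixed poles share the same value of $\De_0$, so they are focus-focus or elliptic-elliptic simultaneously. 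Hence $\nff\in\{0,2\}$, governed entirely by the sign of $\De_0$ at the mixed poles ($\eps_1\eps_2=-1$).

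Finally I would identify $E$ with this discriminant. At a mixed pole, clearing denominators gives $R_1^2R_2^2\,\De_0=(a_0R_2-b_0R_1)^2-4c_0^2R_1R_2$, and expanding matches this term by term with the stated $E$; since $R_1^2R_2^2>0$, the sign of $E$ equals that of $\De_0$. This yields $\nff=2$ for $E<0$, $\nff=0$ for $E>0$, and a degenerate rank-zero singularity — hence a non-semitoric system — for $E=0$. The main obstacle is twofold: keeping the orientation signs $\eps_i$ correct at the south poles, since it is exactly the surviving product $\eps_1\eps_2$ that produces the aligned/mixed dichotomy and thus the possibility of focus-focus behaviour; and carrying out the lengthy but routine polynomial expansion that identifies $(a_0R_2-b_0R_1)^2-4c_0^2R_1R_2$ with the explicit expression for $E$.
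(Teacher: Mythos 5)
Your rank-zero analysis is correct, and it follows a genuinely different route from the paper. The paper does not linearise by hand: it imports from Hohloch--Palmer the location of the rank-zero points, the characteristic polynomial of $A_H=\Om^{-1}D^2H$, and the criterion that the sign of the discriminant $D$ of that quartic (viewed as a quadratic in $Y=X^2$) decides elliptic-elliptic versus focus-focus. That criterion uses $H$ alone, and in fact $D=(\mathrm{tr}\,M)^2\,\De_0$ in your notation; since $\mathrm{tr}\,M$ vanishes exactly when $s_1=\tfrac{1}{2}$, the paper is forced into a separate case there (passing to the characteristic polynomial of $A_L+A_H$), and for $N\times N$ and $S\times S$ it proves positivity of the relevant factor of $D$ by a lengthy critical-point analysis over the vertices, edges and faces of the parameter region. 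Your two-by-two pencil $M(\lam,\mu)$ buys both points cheaply: the discriminant $\mu^2\De_0$ is independent of $\lam$, so no value of $s_1$ is exceptional, and positivity at the aligned poles is essentially immediate --- the only care needed is at the four corners $(s_1,s_2)\in\{0,1\}^2$, where $c_0=0$, so your phrase ``a sum of squares plus a positive term'' needs the one-line check that $a_0/R_1\neq b_0/R_2$ there. Your identification $R_1^2R_2^2\De_0=(a_0R_2-b_0R_1)^2-4c_0^2R_1R_2=E$ at the mixed poles is correct (I verified the expansion), and it also explains structurally why the paper's criterion degenerates at $s_1=\tfrac{1}{2}$ while yours does not.

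There is, however, a genuine gap: the theorem concerns the semitoric \emph{invariant} $\nff$, and your argument classifies only the rank-zero singularities. To conclude that $\nff$ equals $0$ or $2$ whenever $E\neq 0$, you need the system to actually be semitoric there, which requires in addition: complete integrability for all parameter values, including the corners where $c_0=0$ (the paper's Proposition \ref{p:integrab}); non-degeneracy of \emph{all rank-one} singularities and absence of hyperbolic components (the paper's Proposition \ref{pro:rank1}, which needs its own computation via the rank-one criterion of Hohloch--Palmer); and properness of $L$ together with effectiveness of the induced circle action. Your proof of the last clause of the statement --- a degenerate rank-zero point at $E=0$, hence not semitoric --- is fine, since every pencil element then has a repeated eigenvalue and the span of $A_L,A_H$ cannot be a Cartan subalgebra. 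But the rank-one analysis is absent from your proposal and cannot be waved through: a hyperbolic or degenerate rank-one singularity elsewhere in phase space would equally destroy semitoricity, and with it the claim that $\nff$ is a well-defined invariant taking the stated values.
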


Theorem \ref{th:nffintro} is a reformulation of Theorem \ref{th:semit} and Corollary \ref{co:nff} stated later in the paper. The number of focus-focus points invariant is illustrated in Figure \ref{fig:4X2nff}.
The image of the momentum map of system \eqref{sysdefintro} is plotted in Figure \ref{ff:arrayOfPics} which is the starting point for the computation of the polygon invariant.

\begin{theo}
 The polygon invariant is computed in Theorem \ref{th:poly} and plotted in Figure \ref{ff:2FFpolygons}.
\end{theo}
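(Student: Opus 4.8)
The plan is to carry out the Pelayo--\vungoc construction of the semitoric polygon explicitly for the family \eqref{sysdefintro}, splitting according to the dichotomy already established in Theorem \ref{th:nffintro}. First I would determine the momentum image $F(M)\subset\R^2$ by locating all critical values. Since $L=R_1z_1+R_2z_2$ generates a global Hamiltonian $\mbS^1$ action, the image is swept out by the vertical segments $\{L=\mathrm{const}\}\cap F(M)$, whose endpoints trace the images of the rank-one transversally elliptic critical circles and thus form the boundary. The rank-zero critical points sit at the four ``pole'' configurations $z_1,z_2\in\{\pm1\}$, $x_i=y_i=0$, where the coupling term $x_1x_2+y_1y_2$ vanishes; these give the candidate corners, whose $H$-values are $\pm(1-2s_1)$ at $z_1=z_2=\pm1$ and $\pm(1-2s_1)(1-2s_2)$ at $z_1=-z_2=\pm1$. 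The focus-focus values, present precisely when $E<0$, lie in the interior.

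In the regime $E>0$ one has $\nff=0$, the system is of toric type, and the image is already a Delzant polygon: here it suffices to read off the primitive edge vectors from the isotropy weights of the $\mbS^1\times\mbS^1$ action at each elliptic-elliptic vertex (equivalently, from the linearisation of $(L,H)$ there) and to verify the smoothness condition at every corner. In the regime $E<0$ one has $\nff=2$ and the genuine semitoric machinery is needed. I would fix a choice of cut signs $\epsilon_1,\epsilon_2\in\{-1,+1\}$, cut the image vertically upward or downward from each of the two focus-focus values, and on the resulting simply connected region straighten the integral affine structure induced by the action coordinates. Taking $L$ as the first action, so that the straightening preserves the vertical direction, the developing map produces a rational convex polygon, well defined up to the group $\mcG=\{-1,+1\}^2\rtimes\langle T\rangle$ with $T=\left(\begin{smallmatrix}1&0\\1&1\end{smallmatrix}\right)$.

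The computational heart is the determination of the edge slopes of this polygon. Rather than evaluating the full action integral $\oint p\,\dee q$, which involves elliptic integrals and is only truly unavoidable for the Taylor series and twisting-index invariants, I would exploit that the polygon requires merely the affine structure: the slopes of the two edges meeting at each elliptic-elliptic corner are fixed by the local toric weights there, while crossing each vertical focus-focus cut shears the affine structure by the monodromy matrix $T^{\epsilon_i}$, as guaranteed by the focus-focus monodromy theorem. Propagating these shears along the boundary from one corner to the next then pins down every slope, and I would finally check that the outcome is a bona fide convex polygon meeting the rationality condition at each vertex.

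The main obstacle I anticipate is bookkeeping the accumulated monodromy consistently through both cuts while tracking the ordering of the two focus-focus values in $L$, which itself varies over the four-parameter region, together with the $2^2$ possible cut directions, so as to exhibit one representative and describe its full $\mcG$-orbit. A secondary difficulty is confirming convexity and the smoothness of corners uniformly over the whole region $E<0$ rather than at sampled parameter values, and understanding the degenerate transition as $E\to0$. Finally, I would record how the evident parameter symmetries of \eqref{sysdefintro} act on the resulting family of polygons, anticipating the symmetry phenomena that reappear more intricately in the height invariant.
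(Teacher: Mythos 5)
Your proposal is correct in substance and would produce the polygon invariant, but it is organised quite differently from the paper's argument. You carry out the Pelayo--\vungoc construction from scratch: locate the rank-zero points, compute the isotropy weights at the elliptic--elliptic corners from the linearisation of $(L,H)$, and propagate the monodromy shear $T^{\epsilon_i}$ across each vertical cut to pin down every edge slope. The paper instead exploits a shortcut specific to this family: since the symplectic manifold $(M,\om)$ and the first component $L$ coincide with those of the coupled angular momenta, the isotropic weights already computed by Le Floch and Pelayo are reused verbatim, and the slope changes are then read off from \vungoc's Duistermaat--Heckman theorem (Theorem \ref{duist}), which packages exactly the weight and monodromy data you propose to propagate (the jump of $\rho_L'$ at a critical value of $L$ is $-c-e^+-e^-$). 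Concretely, for $\nff=2$ the paper observes that the polygons agree with the known coupled-angular-momenta polygons except for the additional $\Z_2$ sign choice at the second focus-focus value, which changes the slope by one to the right of that value; for $\nff=0$ it identifies the correct $\Z$-orbit by comparing the momentum images of Figure \ref{ff:arrayOfPics} with the representatives in Figure \ref{ff:2FFpolygons}. Your route buys self-containedness and would work for systems with no prior weight computation; the paper's buys brevity and avoids recomputing weights. Two small inaccuracies in your write-up: first, when $E>0$ the image $F(M)$ is \emph{not} already a Delzant polygon --- its boundary is curved (cf.\ Figure \ref{ff:arrayOfPics}); toric type only gives a polygon after straightening by the cartographic map, so that step cannot be skipped. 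Second, the $L$-values of the two focus-focus points are $\pm(R_2-R_1)$, independent of $s_1,s_2$, so their ordering does not vary over the four-parameter region and the bookkeeping you anticipate there is lighter than you fear.
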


Whenever $\nff=2$, the height invariant is defined and has two components. Their explicit computation is our main result:

\begin{theo}
For the values of $(s_1,s_2)$ for which the system \eqref{sysdefintro} has two focus-focus singularities, the height invariant $h :=(h_1,h_2)$ is given by
\begin{align*}
h_1 &= -\dfrac{1}{2\pi} \mcF(s_1,s_2,R) + 2 u\left( (s_1-\tfrac{1}{2} )(s_2 - \tfrac{R}{R+1}) \right), \\[0.2cm]
h_2&= \hspace{0.3cm} \dfrac{1}{2\pi} \mcF(s_1,s_2,R)  +2 u\left( -(s_1-\tfrac{1}{2} )(s_2 - \tfrac{R}{R+1}) \right) = 2-h_1
\end{align*}
where $R:=\tfrac{R_2}{R_1}$, $u$ is the Heaviside step function,
\begin{align*}
\mcF&(s_1,s_2,R) := 2 R \arctan \left(\frac{\symbga_C}{\sqrt{\symbga_A} (2 {s_1}-1) (R ({s_2}-1)+{s_2})}\right) \\
&+ 2 \arctan\left(\frac{\symbga_D}{\sqrt{\symbga_A} (2 {s_1}-1) (R ({s_2}-1)+{s_2})}\right) \\
       &+ \frac{(2 {s_1}-1) (R ({s_2}-1)+{s_2}) }{2 \left({s_1}^2-{s_1}+{s_2}^2-{s_2}\right)}\log \left(\frac{-\sqrt{\symbga_B}}{2 (R+1) \left({s_1}^2-{s_1}+{s_2}^2-{s_2}\right)+\sqrt{\symbga_A}}\right).
\end{align*}
The height invariant is plotted in Figure \ref{plot:intro} and the coefficients $\symbga_A$ $\symbga_B$, $\symbga_C$ and $\symbga_D$ are explicitly stated in Proposition \ref{prop:coeff} below.
\label{th:intro}
\end{theo}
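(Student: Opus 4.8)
The plan is to compute the height invariant directly from its definition as the (normalized) symplectic volume of the portion of the reduced space lying below the focus-focus value, in the spirit of the computations for the coupled spin-oscillator and the coupled angular momenta in \cite{ADH, ADH2}. Throughout I take as given the location of the two focus-focus points $m_1, m_2$ and their images $F(m_i) = (L(m_i), H(m_i))$, since this data is produced in the course of proving Theorem \ref{th:semit} and Corollary \ref{co:nff}.

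The function $L = R_1 z_1 + R_2 z_2$ generates the effective $\mbS^1$-action rotating both spheres simultaneously about their $z$-axes. Writing $(z_i, \phi_i)$ for the standard symplectic coordinates on each $\mbS^2$ and passing to $\psi := \phi_1 - \phi_2$, $\sigma := \phi_2$, one finds
\[
\omega = R_1\, d\psi \wedge dz_1 + d\sigma \wedge dL .
\]
Restricting to a regular level $L = \ell$ and taking the Marsden--Weinstein quotient by the $\sigma$-orbits yields a reduced sphere with symplectic form $\omega_{\mathrm{red}} = R_1\, d\psi \wedge dz_1$, coordinatised by $(z_1, \psi)$ subject to $z_2 = (\ell - R_1 z_1)/R_2 \in [-1,1]$. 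Using the invariant $x_1 x_2 + y_1 y_2 = \sqrt{(1-z_1^2)(1-z_2^2)}\,\cos\psi$, the reduced Hamiltonian becomes $\bar H = A(z_1) + B(z_1)\cos\psi$ with $A, B$ explicit functions of $z_1$ and the parameters. The height invariant of $m_i$ is then
\[
h_i = \frac{1}{2\pi}\int_{\{\bar H < H(m_i)\}} \omega_{\mathrm{red}} = \frac{R_1}{2\pi}\int \big|\{\psi : A(z_1) + B(z_1)\cos\psi < H(m_i)\}\big|\, dz_1 ,
\]
where the inner angular measure equals $2\arccos\!\big((H(m_i)-A(z_1))/B(z_1)\big)$ on the subinterval of $z_1$ where the level set $\{\bar H = H(m_i)\}$ meets the orbit circle, and is $0$ or $2\pi$ on the complementary polar caps.

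The core of the computation is the evaluation of $\int 2\arccos(\cdots)\,dz_1$. Integrating by parts turns it into an integral of a rational function of $z_1$ times $1/\sqrt{\mathrm{quartic}(z_1)}$, the quartic arising from $B(z_1)^2$ together with the discriminant of the $\cos\psi$-threshold; carrying this out and recognising the antiderivatives produces exactly the two $\arctan$ terms and the single $\log$ term that define $\mcF$, with $\symbga_A, \symbga_B, \symbga_C, \symbga_D$ emerging under the radicals and inside the arguments as recorded in Proposition \ref{prop:coeff}. This is also where I expect the main obstacle to lie: the partial-fraction bookkeeping is heavy, the correct antiderivative branch must be selected, and the limits of integration are the turning points where $\{\bar H = H(m_i)\}$ becomes tangent to the $\mbS^1$-orbits, which have to be identified and substituted with care.

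The remaining, more delicate conceptual point is the origin of the discrete contributions $2u(\pm(s_1 - \tfrac12)(s_2 - \tfrac{R}{R+1}))$. As the parameters vary, the sign of $(s_1-\tfrac12)(s_2-\tfrac{R}{R+1})$ governs whether an entire polar cap of the reduced sphere is swallowed by or excluded from the region $\{\bar H < H(m_i)\}$, shifting the measured volume by a full normalized cap. I would pin this down by tracking whether $H(m_i)$ lies above or below the extrema of $\bar H$ attained on the boundary circles $z_1 = \mathrm{const}$, and reading off the corresponding jump of the Heaviside term. Finally, I would obtain $h_2$ with little additional effort by invoking the symmetry of \eqref{sysdefintro} that interchanges the two focus-focus points; this accounts simultaneously for the sign reversal of $\mcF$ between $h_1$ and $h_2$ and for the complementarity relation $h_1 + h_2 = 2$.
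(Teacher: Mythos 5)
Your strategy for $h_1$ is, in essence, exactly the paper's: singular symplectic reduction by the $\mbS^1$-action of $L$ on the level through the focus-focus point, a reduced Hamiltonian of the form $\mcA + \sqrt{\mcB}\cos q_2$ on the reduced space, the height as $\tfrac{1}{2\pi}$ times the area of the sublevel region written as an $\arccos$ integral, integration by parts, and a case analysis for whether the sublevel region contains a whole cap, which is what the Heaviside terms encode (the paper's cases I--V in the proof of Theorem \ref{th:height}). The one genuinely different element is your treatment of $h_2$: you deduce it from a symmetry interchanging the two focus-focus points, whereas the paper constructs a second reduced model adapted to $S \times N$ (\S \ref{reducedSN}) and repeats the computation with the cases exchanged. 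Your shortcut is legitimate and more economical: composing $\Psi_2$ and $\Psi_4$ from Proposition \ref{proptrans4x2} gives a symplectomorphism of the fixed system with pullback $(L,H) \mapsto (-L,-H)$ that exchanges $N \times S$ and $S \times N$, hence maps $Y_1^-$ onto $Y_2^+$ and yields $h_2 = 2 - h_1$ at once; the price is that you lose the independent check that the parallel computation provides.

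One step of your plan needs correction before it is rigorous. You assert that integration by parts leaves a rational function times $1/\sqrt{\mathrm{quartic}}$ and that ``recognising the antiderivatives'' produces the two $\arctan$ terms and the $\log$ term. For a genuine quartic radicand with simple roots this is false: the integral would be elliptic and no such elementary antiderivatives exist. What makes the computation work --- and what the paper singles out as the decisive observation --- is that at the focus-focus level ($l=0$, $h=0$) the quartic $\mcP_0^{NS}$ degenerates: it has a double root at $p_2 = 0$ (the roots are $\zeta_1=\zeta_2=0$, $\zeta_3$, $\zeta_4$), so the radical factors as $p_2\sqrt{Q(p_2)}$ with $Q$ of degree two, and the integrand becomes rational times $1/\sqrt{Q}$, which integrates to the stated $\arctan$ and $\log$ expressions via the model integrals $\mcN_A$ and $\mcN_B$. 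Carrying out your computation would of course reveal this degeneration, but it must be identified explicitly, since the entire closed form of $\mcF$ --- and indeed the reason the height invariant is computable in elementary terms while the Taylor series invariant of this family is not --- rests on it.
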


\begin{figure}[ht]
 \centering
  	\begin{subfigure}[b]{6cm}
        \includegraphics[width=\textwidth]{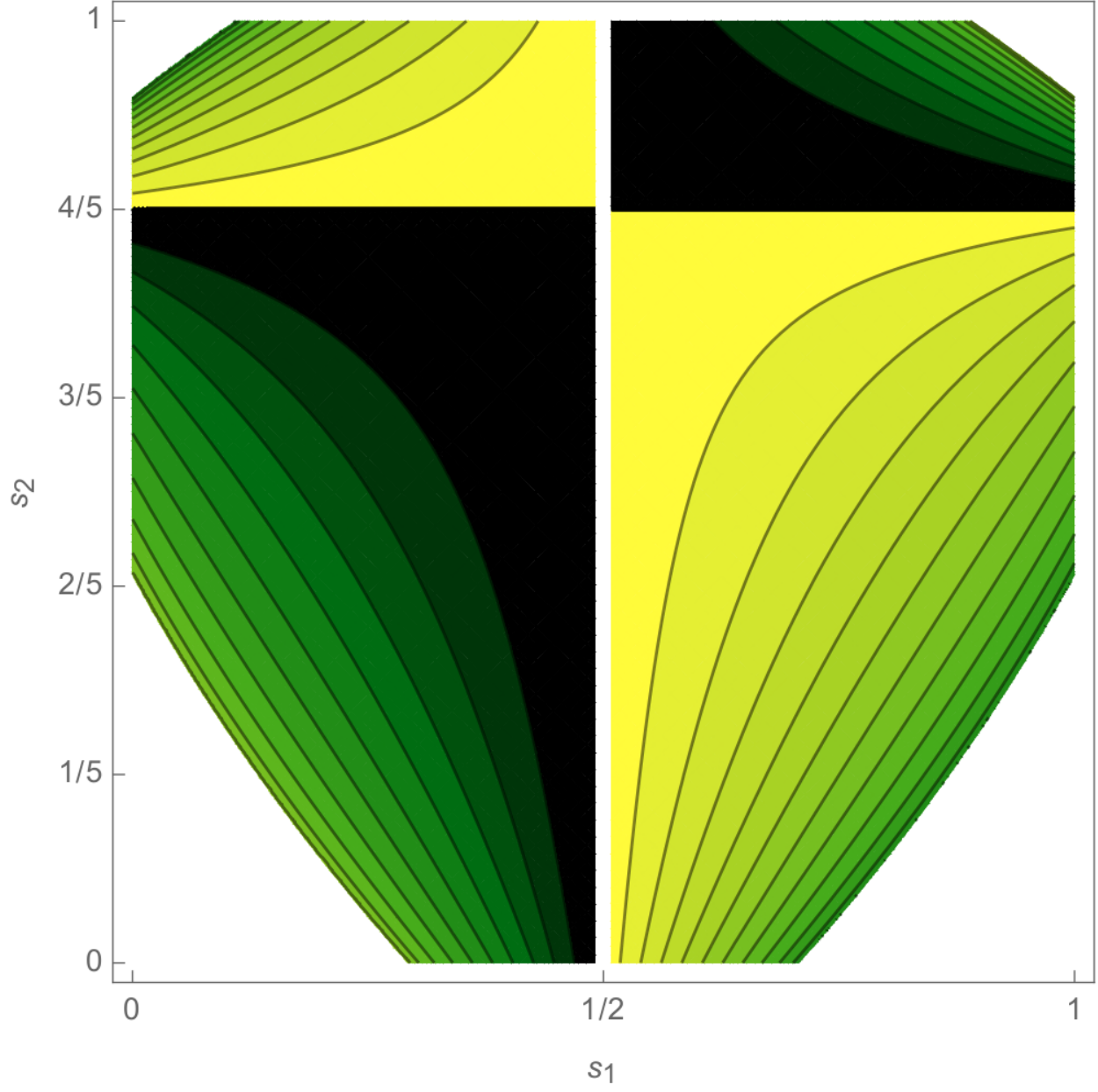}
        \caption{\small \textit{Singularity $N \times S$}}
    \end{subfigure}
    \hspace{1.5cm}
        \begin{subfigure}[b]{6cm}
        \includegraphics[width=\textwidth]{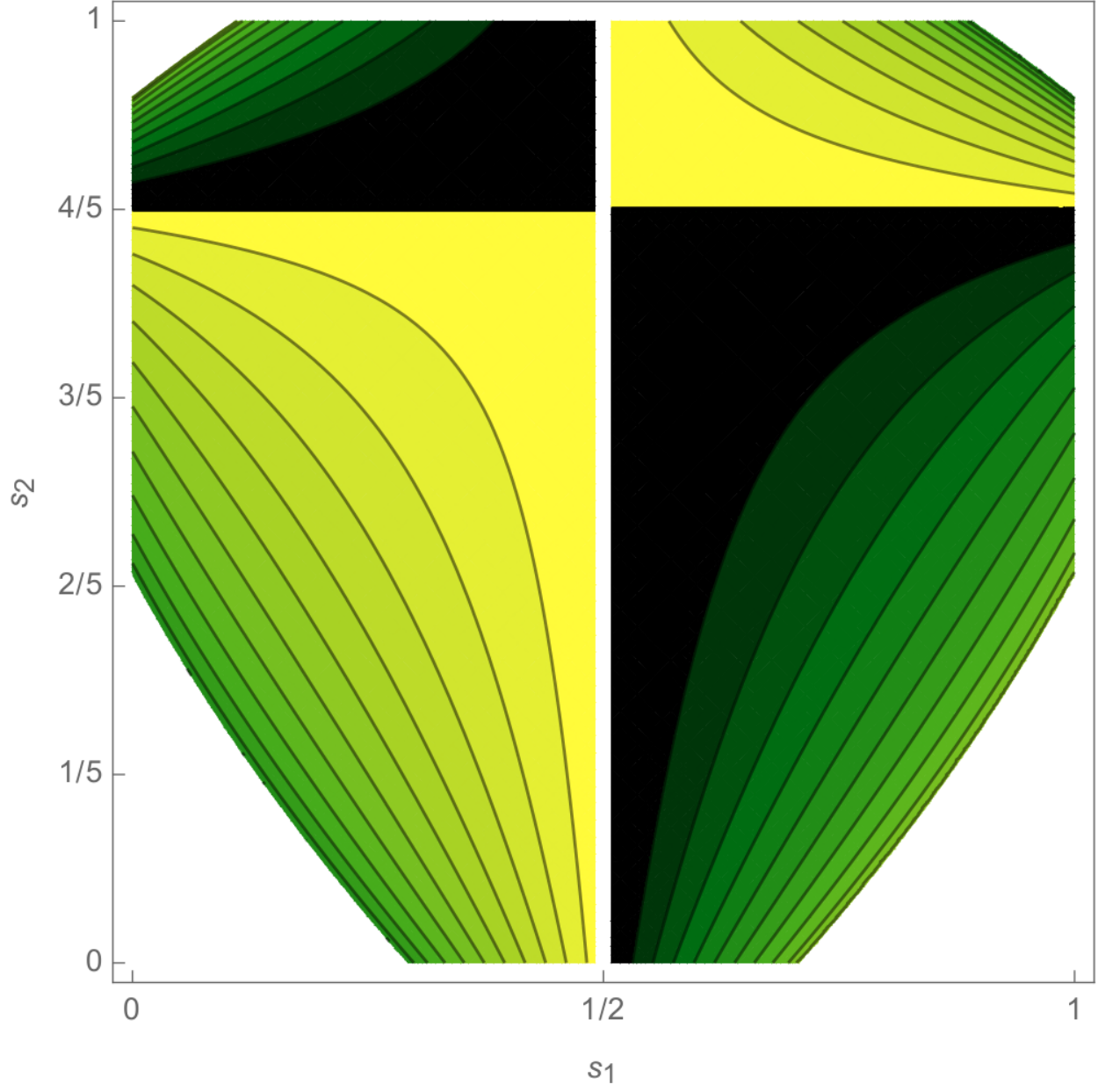}
        \caption{\small \textit{Singularity $S \times N$}}
    \end{subfigure}
\caption{\small \textit{Representation of the height invariant for $R_1=1$, $R_2=4$. Lighter colours represent higher values than darker colours.}}
 \label{plot:intro}
\end{figure}

The coefficients encode the dependence of the height invariant on the various parameters. This dependence is polynomial, except for some radicals. 

\begin{pro}
\label{prop:coeff}
 The coefficients $\symbga_A$ $\symbga_B$, $\symbga_C$ and $\symbga_D$ of Theorem \ref{th:intro} are given by
\begin{align*}
 \symbga_A := & -R^2 (1-2 {s_1})^2 ({s_2}-1)^2+2 R \left(8 {s_1}^4-16 {s_1}^3+4 {s_1}^2 \left(3 {s_2}^2-3 {s_2}+2\right) \right.\\
 &\left.-12 {s_1} ({s_2}-1) {s_2}+{s_2} \left(8 {s_2}^3-16 {s_2}^2+7 {s_2}+1\right)\right)-(1-2 {s_1})^2 {s_2}^2 \\[0.2cm]
 \symbga_B := &  R^2 \left(4 {s_1}^4-8 {s_1}^3+4 {s_1}^2 \left(3 {s_2}^2-4 {s_2}+2\right)-4 {s_1} \left(3 {s_2}^2-4 {s_2}+1\right)\right.\\
 &\left.+({s_2}-1)^2 \left(4 {s_2}^2+1\right)\right)-2 R \left(4 {s_1}^4-8 {s_1}^3+4 {s_1}^2 \left({s_2}^2-{s_2}+1\right) \right.\\
 &\left.-4 {s_1} ({s_2}-1) {s_2}+{s_2} \left(4 {s_2}^3-8 {s_2}^2+3 {s_2}+1\right)\right)+4 {s_1}^4-8 {s_1}^3\\
 &+4 {s_1}^2 \left(3 {s_2}^2-2 {s_2}+1\right)+4 {s_1} {s_2} (2-3 {s_2})+{s_2}^2 \left(4 {s_2}^2-8 {s_2}+5\right) \\[0.2cm]
 \symbga_C := & -4 R^2 {s_1}^2 {s_2}^2+8 R^2 {s_1}^2 {s_2}-4 R^2 {s_1}^2+4 R^2 {s_1} {s_2}^2-8 R^2 {s_1} {s_2}+4 R^2 {s_1}\\
 &-R^2 {s_2}^2+2 R^2 {s_2}-R^2+8 R {s_1}^4-16 R {s_1}^3+8 R {s_1}^2 {s_2}^2-8 R {s_1}^2 {s_2}\\&+8 R {s_1}^2-8 R {s_1} {s_2}^2+8 R {s_1} {s_2}+8 R {s_2}^4-16 R {s_2}^3+6 R {s_2}^2+2 R {s_2}\\
 &+4 \sqrt{{\symbga_B}} \left(-{s_1}^2+{s_1}-{s_2}^2+{s_2}\right)-8 {s_1}^4+16 {s_1}^3-20 {s_1}^2 {s_2}^2+16 {s_1}^2 {s_2}\\
 &-8 {s_1}^2+20 {s_1} {s_2}^2-16 {s_1} {s_2}-8 {s_2}^4+16 {s_2}^3-9 {s_2}^2 \\[0.2cm]
 \symbga_D  :=& -8 R^2 {s_1}^4+16 R^2 {s_1}^3-20 R^2 {s_1}^2 {s_2}^2+24 R^2 {s_1}^2 {s_2}-12 R^2 {s_1}^2+20 R^2 {s_1} {s_2}^2\\&-24 R^2 {s_1} {s_2}+4 R^2 {s_1}-8 R^2 {s_2}^4+16 R^2 {s_2}^3-9 R^2 {s_2}^2+2 R^2 {s_2}-R^2\\
 &+4 R \sqrt{{\symbga_B}} \left(-{s_1}^2+{s_1}-{s_2}^2+{s_2}\right)+8 R {s_1}^4-16 R {s_1}^3+8 R {s_1}^2 {s_2}^2-8 R {s_1}^2 {s_2}\\
 &+8 R {s_1}^2-8 R {s_1} {s_2}^2+8 R {s_1} {s_2}+8 R {s_2}^4-16 R {s_2}^3+6 R {s_2}^2+2 R {s_2}-4 {s_1}^2 {s_2}^2\\
 &+4 {s_1} {s_2}^2-{s_2}^2.
\end{align*}
\end{pro}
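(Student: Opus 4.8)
The plan is to obtain the four coefficients as an explicit by-product of the closed-form evaluation of the integral defining $\mcF$ in Theorem~\ref{th:intro}: they are precisely the algebraic expressions that end up inside the two $\arctan$-arguments and inside the single $\log$-argument once the height-invariant integral has been carried out. Recall that the height invariant $h_i$ of a focus-focus point $m_i$ is, up to normalisation, the symplectic area of the part of the reduced space $L^{-1}(L(m_i))/S^1$ lying below the critical value $H(m_i)$. Hence the first step is to locate the two focus-focus points, which occupy the pole configurations $N\times S$ and $S\times N$ at $(z_1,z_2)=(1,-1)$ and $(-1,1)$, with corresponding $L$-values $\pm(R_1-R_2)$, and then to write down the reduced Hamiltonian on the reduced sphere at each of these levels.

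The second step is to turn the sub-level area into a one-dimensional definite integral. After symplectic reduction the area element becomes an algebraic function of a single remaining sphere coordinate, whose integrand carries the square root of a polynomial built from the two sphere constraints together with the level set $H=\mathrm{const}$. The radicands are fixed \emph{before} any integration is performed: one checks directly that the quadratic radicand equals $\symbga_A=-E/R_1^2$ in the notation of Theorem~\ref{th:nffintro}, so that $\sqrt{\symbga_A}$ is real exactly on the two-focus-focus region $E<0$, while $\symbga_B$ is the analogous discriminant attached to the quartic that encodes the reduced level set. This already pins down $\symbga_A$ and $\symbga_B$ and explains the square roots that then propagate into $\symbga_C$ and $\symbga_D$.

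The third step is the integration itself. A partial-fraction / substitution reduction of the algebraic integrand splits it into pieces that integrate to $\arctan$ — with $\sqrt{\symbga_A}$ entering through the discriminant of the relevant quadratic, hence sitting in the denominators of the $\arctan$-arguments — and pieces that integrate to $\log$, carrying $\sqrt{\symbga_B}$. Evaluating the resulting antiderivatives at the turning points that bound the admissible range of the sphere coordinate, and simplifying, collects the polynomial data exactly into the numerators $\symbga_C$ and $\symbga_D$ of the two $\arctan$-arguments and into the $\log$-argument; reading these off yields the expressions claimed in Proposition~\ref{prop:coeff}.

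The main obstacle is not conceptual but the sheer algebraic bulk together with the nested radicals: since $\sqrt{\symbga_B}$ appears inside both $\symbga_C$ and $\symbga_D$, turning the boundary evaluation into a single clean closed form requires careful rationalisation and a check that the several square roots combine consistently. In parallel one must fix the branches of $\arctan$ and $\log$ so that $\mcF$ is the correct continuous representative of the area — this is where the sign conventions and the Heaviside correction $2u(\cdot)$ of Theorem~\ref{th:intro} come in — and verify the degeneration of the radicals as $E\to 0$, where the system ceases to be semitoric. I expect this bookkeeping to be the crux, and it is most safely handled with computer algebra, supplemented by numerical comparison of the resulting closed form against direct quadrature of the original area integral.
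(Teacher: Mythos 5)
Your plan coincides with the paper's own proof (given inside Theorem \ref{th:height}, where these coefficients are derived): symplectic reduction at each focus-focus level, the height written as a sub-level area reduced to a one-dimensional integral, the observation that the level-set quartic has a double root at the reduced singular point so that only a quadratic radicand $Q(p_2)=\al p_2^2+\be p_2+\symbga$ survives (hence no elliptic integrals), a partial-fraction split with poles at $p_2=2$ and $p_2=2R$ into $\arctan$- and $\log$-type antiderivatives (the paper's $\mcN_A$, $\mcN_B$) evaluated between the turning points, and CAS-assisted simplification from which $\symbga_C$, $\symbga_D$ and the $\log$-argument are read off --- and your identity $\symbga_A=-E/R_1^2$ is indeed true, a nice connection the paper never states explicitly. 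The only slip is in your third step: in the paper's notation $\symbga_A=\symbga=Q(0)$ is the \emph{constant term} of the quadratic (its value at the reduced singular point $p_2=0$), while it is the \emph{discriminant} of $Q$ that carries $\sqrt{\symbga_B}$ (one has $\be^2-4\al\symbga=4\al\,\symbga_B$), so the mechanisms you assign to the two radicals are swapped; this mislabelling does not affect the execution or the outcome of the computation.
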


Theorem \ref{th:intro} implies the following relation between the height invariant and the parameters of system \eqref{sysdefintro}:

\begin{co}
\label{co:intro}
The two components $(h_1, h_2)$ of the height invariant have an intricate dependence on the four parameters $s_1, s_1, R_1, R_2$ of the system but a very simple relation between each other, namely $h_2=2-h_1$.
\end{co}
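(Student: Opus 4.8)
The plan is to read the relation $h_2 = 2 - h_1$ off the closed forms in Theorem \ref{th:intro}, and to treat the qualitative claim of intricacy as an inspection of those same formulas together with Proposition \ref{prop:coeff}. Write $X := (s_1 - \tfrac{1}{2})(s_2 - \tfrac{R}{R+1})$ for the common argument of the Heaviside terms. The first step is simply to add the two displayed expressions: the transcendental part $\tfrac{1}{2\pi}\mcF(s_1,s_2,R)$ enters $h_1$ with a minus sign and $h_2$ with a plus sign, so it cancels identically, leaving $h_1 + h_2 = 2\big(u(X) + u(-X)\big)$. Thus the entire content of the relation reduces to the single scalar identity $u(X) + u(-X) = 1$.

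The second step is to establish that identity and to deal with its one delicate point. For every $X \neq 0$ exactly one of $X$ and $-X$ is positive, so $u(X) + u(-X) = 1$ and hence $h_1 + h_2 = 2$ throughout the open set where $X \neq 0$. The locus $X = 0$ — that is, $s_1 = \tfrac{1}{2}$ or $s_2 = \tfrac{R}{R+1}$ — is precisely where the formula for $\mcF$ degenerates: the factor $(2 s_1 - 1)(R(s_2 - 1) + s_2)$ that appears in the denominators of both $\arctan$ terms and as the numerator of the prefactor of the $\log$ term equals, up to the constant $2(R+1)$, exactly $X$, so as $X \to 0$ the $\arctan$ arguments blow up while the $\log$ prefactor vanishes. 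There $\mcF$ and the step functions must be read as one-sided limits, and the main (indeed the only) obstacle of the proof is to verify that these limits are compatible with $h_1 + h_2 = 2$ — equivalently, that the symmetric convention $u(0) = \tfrac{1}{2}$ together with the limiting value of $\mcF$ preserves the identity. I would control this locus using the sign of $E$ from Theorem \ref{th:nffintro} to locate the portions of $s_1 = \tfrac{1}{2}$ and $s_2 = \tfrac{R}{R+1}$ inside the two-focus-focus region and check the limits there; away from $X = 0$ the corollary is the immediate $\mcF$-cancellation above.

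Finally, the intricacy claim requires no computation, only inspection: $\mcF$ is a sum of two $\arctan$ terms and one $\log$ term whose arguments are rational in $s_1, s_2, R$ and involve the radicals $\sqrt{\symbga_A}$ and $\sqrt{\symbga_B}$, while by Proposition \ref{prop:coeff} the coefficients $\symbga_A, \symbga_B, \symbga_C, \symbga_D$ are high-degree polynomials in $s_1, s_2, R$ with further nested occurrences of $\sqrt{\symbga_B}$ inside $\symbga_C$ and $\symbga_D$. This transcendental-and-radical dependence on all four parameters stands in deliberate contrast to the purely algebraic relation $h_2 = 2 - h_1$ that the first two steps produce, which is exactly the dichotomy asserted by the corollary.
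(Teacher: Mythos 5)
Your proposal is correct and takes essentially the same route as the paper: Corollary \ref{co:intro} (restated as Corollary \ref{co:rels}) is simply read off the closed forms of Theorem \ref{th:intro}/\ref{th:height}, where adding the two expressions cancels the $\mcF$ terms and the two Heaviside terms sum to $1$. The only difference is at your one delicate point, the locus $(s_1-\tfrac{1}{2})(s_2-\tfrac{R}{R+1})=0$: the paper's proof of Theorem \ref{th:height} disposes of it by direct computation (its ``Case III'', giving $h_1=h_2=1$) rather than by the one-sided limiting argument you sketch, so the relation holds there without any further verification.
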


Theorem \ref{th:intro} and Proposition \ref{prop:coeff} are restated and proven as Theorem \ref{th:height} later in the paper. Corollary \ref{co:intro} reappears as Corollary \ref{co:rels} at the very end of the paper.

All computations in this paper were verified with \emph{Mathematica}.

\subsubsection*{Structure of the paper}

This paper is structured as follows. In section 2, we briefly summarise the definition of simple semitoric systems and the classification in terms of symplectic invariants. In section 3, we introduce our family of semitoric systems and compute the number of focus-focus points and the polygon invariant. Section 4 is devoted to the computation of the height invariant associated to both focus-focus singularities. 

\subsubsection*{Figures}

All figures have been made with \emph{Mathematica}. Figure \ref{figfib2} has also been edited with \emph{Inkscape}.

%%%%%%%%%%%%%%%%%%%%%%%%%%%%%%%%%%%%%%%%%%%%%%%%%%%%%%%%%%%%%%%%%%%%%%%%%
%%%%%%%%%%%%%%  new section  %%%%%%%%%%%%%%%%%%%%%%%%%%%%%%%%%%%%%%%%%%%%
%%%%%%%%%%%%%%%%%%%%%%%%%%%%%%%%%%%%%%%%%%%%%%%%%%%%%%%%%%%%%%%%%%%%%%%%%%%

\section{The symplectic invariants of semitoric systems}
\label{ss:sinv}

In this section we briefly summarise the symplectic classification of simple semitoric systems. More details can be found in the original papers by Pelayo $\&$ \vungoc \cite{PV1, PV4}.

Let $(M,\om)$ denote a connected four-dimensional  symplectic manifold. The triplet $(M,\om,F)$ is said to be a \emph{completely integrable system} if $F:=(L,H):M \to \R^2$ is a smooth map such that $dF$ has maximal rank almost everywhere and the components $L,H$ Poisson-commute, i.e.\ $\{L,H\}:= \om(\mcX_L,\mcX_H)=0$, where $\mcX_f$ is the Hamiltonian vector field associated to a smooth map $f:M \to \R$ via $\om(\mcX_f,\cdot) = -\dee f$. This definition of integrability is sometimes referred to as \emph{Liouville integrability} in the literature. The singularities of $(M,\om,F)$ are the points where the differentials $DL$, $DH$ fail to be linearly independent, so the rank of $DF$ is not maximal. Non-degenerate singularities (see Bolsinov $\&$ Fomenko \cite{BF} or Vey \cite{Ve} for a precise definition) can be locally characterised using normal forms, cf.\ Eliasson \cite{El1, El2}, Miranda $\&$ Zung \cite{MZ}, Miranda $\&$ \vungoc \cite{MV}, \vungoc $\&$ Wacheaux \cite{VW}, and others. In particular, they can be decomposed into \emph{regular}, \emph{elliptic}, \emph{hyperbolic} and \emph{focus-focus} components.

\begin{de}
A \emph{semitoric system} is a completely integrable system $(M,\om,F)$ with two degrees of freedom, where $F:=(L,H): M \to \R^2$ is smooth and the following conditions are satisfied:
\begin{enumerate}[\quad 1)]
	\item All singularities are non-degenerate and have no hyperbolic components. 
	\item The map $L$ induces an effective $\mbS^1$-action on $M$ with a $2\pi$-periodic flow.
	\item $L$ is proper, i.e.\ the preimage of a compact set by $L$ is compact again.
\end{enumerate} Moreover, if the following condition is satisfied, the semitoric system is said to be \emph{simple}:
\begin{enumerate}[\quad 1)]
\setcounter{enumi}{3}
	\item In each level set of $L$ there is at most one singularity of focus-focus type.
\end{enumerate}
\label{def:semitoric}
\end{de} 

In the present work we will only consider simple semitoric systems. In the context of semitoric systems, we have two degrees of freedom and we exclude hyperbolic components, so the rank of $DF$ can only be 0 or 1. Singularities of rank 0 can either be of \emph{focus-focus} type or of \emph{elliptic-elliptic} type, i.e.\ having two elliptic components. Singularities of rank 1 must necessarily have a regular and an elliptic component, so they are called \emph{elliptic-regular} singularities.

\subsection{The singular Lagrangian fibration}

The momentum map $F:=(L,H)$ of a semitoric system induces a two-dimensional singular \emph{Lagrangian} fibration on $M$. The base of this fibration is the image $B:=F(M)$, which is a contractible subset of $\R^2$. Since $L$ is proper, we know that all fibres of $F$ are compact. \vungoc \cite{Vu2} showed that the fibres of semitoric systems are connected and he also characterised the structure of the fibration over $B$. The boundary $\partial B \subset B$ consists of all elliptic-elliptic and elliptic-regular critical values. The former are located in the vertices, while the latter always come in one-parameter families and form the edges. The set $B_{FF}$ of focus-focus critical values is finite and lies in the interior of $B$. The regular fibres are thus mapped to $B_{\text{reg}} := \mathring{B} \backslash B_{FF}$. %Figure \ref{figfib} shows the different possible fibres that simple semitoric systems can have.

Singular fibres are those containing a singularity.  Elliptic-elliptic singularities constitute always their own fibre. Elliptic-regular fibres are homeomorphic to a circle. Fibres containing a focus-focus singularity are homeomorphic to a pinched torus, see Figure \ref{figfib2}. If simplicity is not assumed (cf.\ Definition \ref{def:semitoric}), then fibres containing more than one focus-focus singularity, homeomorphic to a multi-pinched torus, are also possible. This situation has been studied by Pelayo $\&$ Tang \cite{PT} and Palmer \emph{et al.}\ \cite{PPT}.

Regular fibres are those containing no singularities. According to the action-angle theorem by Liouville $\&$ Arnold $\&$ Mineur \cite{Ar}, regular fibres are homeomorphic to the two-torus $\T^2$. More precisely, for each regular value $c \in B_\text{reg}$ we can find a neighbourhood $U$ of $c$ and $V \subset \R^2$ of the origin such that $F^{-1}(U) \subset M$ is symplectically equivalent to $V \times \T^2 \subset T^* \T^2$. This defines an integral-affine structure on $B_\text{reg}$. Let $\Lam_c = F^{-1}(c) \simeq \T^2$ be the fibre corresponding to the value $c$ and let $\{\gamma_1(c), \gamma_2(c)\}$ be a basis of $H^1(\Lam_c)$, varying smoothly with $c$. Then the action coordinates $(I_1,I_2)$ on $V$ are given by the expression
\begin{equation}
I_j(c):=\dfrac{1}{2\pi} \oint_{\gamma_j(c)} \varpi,\qquad j=1,2,
\label{actions}
\end{equation}where $\varpi$ is any semiglobal primitive of the symplectic form, $\dee \varpi = \om$.

Since the Hamiltonian flow of $L$ induces a global circular action on $M$, we can take $\gamma_1(c)$ to be the orbit of $L$. This way, we will have $I_1(c) = L(c)$. Different choices of $\gamma_2(c)$ belonging to different homology classes will result in different values of $I_2(c)$, so there is an integer degree of freedom in the definition of this action coordinate.

\begin{figure}[ht]
\centering
\includegraphics[height=7cm]{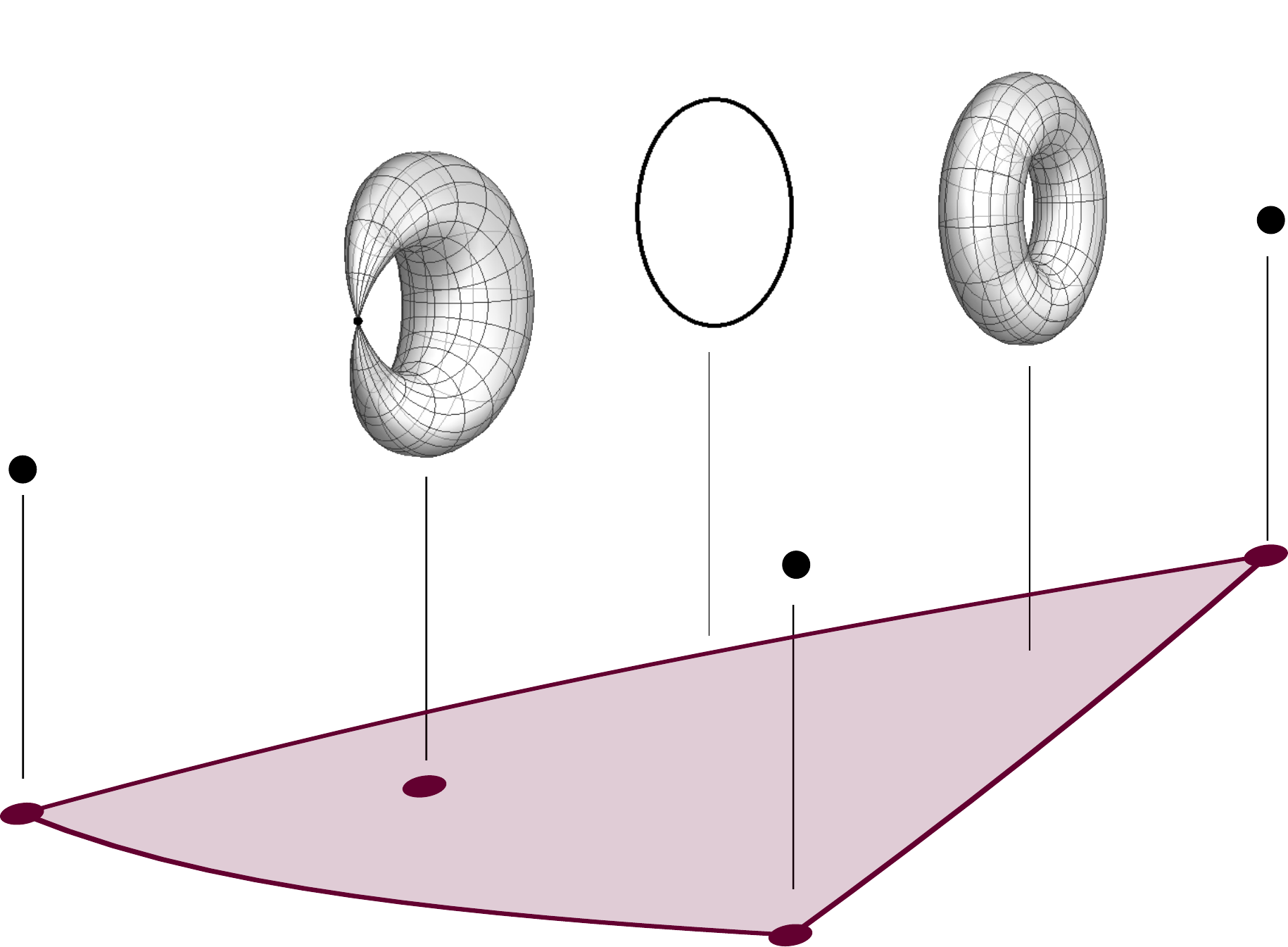}
\caption{\textit{\small{Example fibration of a semitoric system, corresponding to the coupled angular momenta for $t=1/2$ and $R_2>R_1$. The fibration has three elliptic-elliptic fibres, one focus-focus fibre and three 1-parameter families of elliptic-regular fibres. On $B_\text{reg}$ the fibres are regular 2-tori.}}}
\label{figfib2}
\end{figure}

\subsection{The polygon and height invariants}

\vungoc \cite{Vu2} used the action coordinates to define the so-called \emph{cartographic homeomorphism} as follows. Let $\nff \in \Z$ be the number of focus-focus points and $c_1,...,c_\nff \in B$ their critical values. For each $r =1,...,\nff$, pick a sign choice $\epsilon_r \in \{-1,1\} \simeq \Z_2$  and consider the half-line $b_r^{\epsilon_r} \subset B$ that starts in $c_r$ and extends upwards if $\epsilon_r = +1$ and downwards if $\epsilon_r = -1$. Let $b^\epsilon = \cup_r b_r^{\epsilon_r}$.  Then for any set of choices $\epsilon = (\epsilon_1,...,\epsilon_\nff)$ there exists a map $f:=f^{\epsilon}:B \to \R^2$ that is a homeomorphism onto its image $\Delta := f(M)$, it preserves the first coordinate, i.e.\ $f(l,h) = (l,f^{(2)}(l,h))$ and $f|_{B\backslash b^\epsilon}$ is a diffeomorphism onto its image. This process is illustrated in Figure \ref{fig:straight}.

\begin{figure}[ht]
\centering
  	\begin{subfigure}[b]{6cm}
  	\centering
        \includegraphics[width=0.8\textwidth]{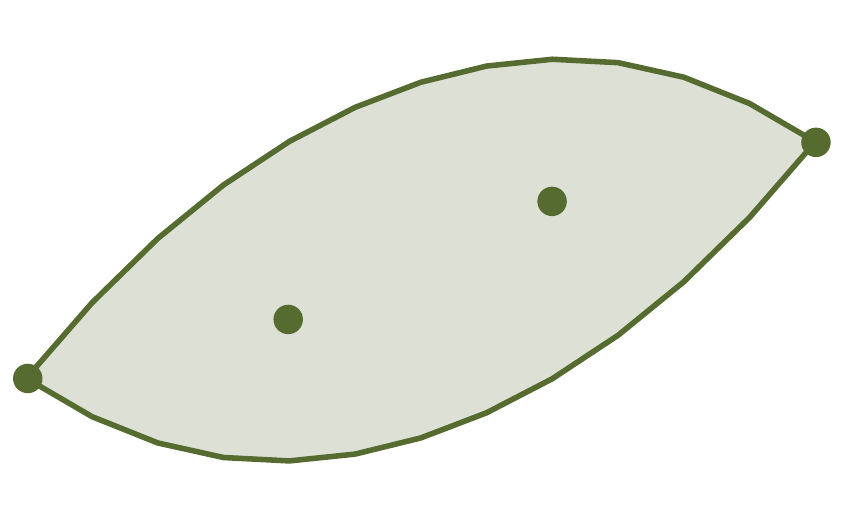}
        \caption{\small \textit{Image $B=F(M)$}}
    \end{subfigure}
    \begin{subfigure}[b]{1.5cm}
    \centering
    \vspace{-2cm}$\Longrightarrow$ \vspace{2cm}
    \end{subfigure}
        \begin{subfigure}[b]{6.5cm}
        \includegraphics[width=\textwidth]{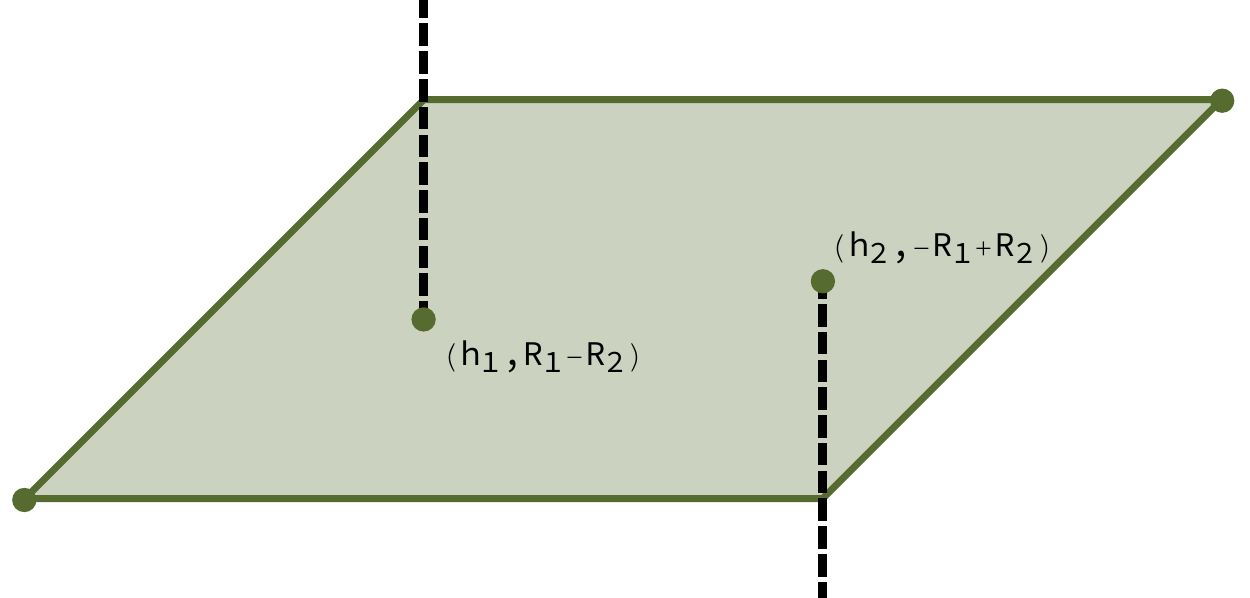}
        \caption{\small \textit{Polygon $\De = f(B)$}}
    \end{subfigure}

\caption{\textit{\small{The cartographic homeomorphism $f$ brings the image of the momentum map $F(M)=:B$ to a polygon $f(B)=:\Delta $. It preserves the first coordinate and adds corners at the focus-focus values following the cutting directions $\epsilon = (\epsilon_1, \epsilon_2)$. In this case, we have $\epsilon = (1,-1)$.}}}
\label{fig:straight}
\end{figure}

The map $f$ is constructed by extending the coordinates $(L,I_2)$ defined by equation \eqref{actions}. The non-smoothness along the segments $b^\epsilon_r$ is a consequence of the monodromy induced by the presence of focus-focus singularities, an obstruction to globally-defined action-angle coordinates studied, among others, by Nekhoroshev \cite{Ne} and Duistermaat \cite{Du1}. 

The image $\Delta \subset \R^2$ of the cartographic homeomorphism is a convex rational polygon, which is compact if and only if $M$ is compact. Since the definition of the action $I_2$ is not unique, neither is $\Delta$. There is a $\Z$-action that relates all possible choices of $I_2$. Besides that, there is a $(\Z_2)^\nff$-action of sign choices $\epsilon$ that also acts on $f$.

\begin{de}
The \emph{polygon invariant} associated with the simple semitoric system $(M,\om,F)$ is the equivalence class $[\De]$ of the polygon $\Delta = f(B)$ by the $(\Z_2)^\nff \times \Z$-action that relates all possible choices of $f$.
\label{df:polinv}
\end{de}

The image of the focus-focus critical values under the cartographic homeomorphism is the intuition for next invariant: for each focus-focus critical value $c_r$, $r=1,...,\nff$, we can compute the vertical distance between the image of the critical value under $f$ and the edge of the polygon $\Delta$,
\begin{equation}
	h_r := f^{(2)}(c_r) - \min_{c \in \Delta \cap b^{-1}_r} \pi_2(c),
\end{equation} where $\pi_2:\R^2 \to \R$ is the canonical projection onto the second coordinate. This quantity is independent of the choice of map $f$. The  height $h_r$ can also be interpreted as the symplectic volume of the submanifold $Y_r^- := \{ p \in M$ $|$ $L(p) = L(m_r)$ and $H(p) < H(m_r)\}$, that is, the real volume of $Y_r^-$ divided by $2\pi$, where $m_r \in M$ is the focus-focus singularity corresponding to $c_r$.  

\begin{de}
The \emph{height invariant} associated to the simple semitoric system $(M,\om,F)$ is the $\nff$-tuple $h = (h_1,...,h_\nff)$. It is independent of the choice of cartographic homeomorphism $f$.
\label{df:heiinv}
\end{de}

\subsection{The other invariants and the symplectic classification}

The remaining two invariants are related to the structure of the action $I_2$ as we approach focus-focus singularities. Fix $r \in \{1,...,\nff\}$. In this paper, we do not work with these two invariants, but for sake of completeness, we review them quickly. More details can be found in Pelayo $\&$ \vungoc \cite{PV1, PV4}. In a neighbourhood of the singular fibre containing $m_r$, \vungoc \cite{Vu1} proved that the action $I_2$ can be written as
\begin{equation*}
2\pi I_2(w) = 2\pi I_2(0) - \text{Im}(w\log w -w) + S_r(w),
\end{equation*} where $w:= l + ij$, $i$ is the imaginary unit, $l$ is the value of $L-L(m_r)$, and $j$ is the value of the second Eliasson function around $m_r$. The function $S_r(w)$ is a smooth function that can be understood as a desingularised action. Different choices of $I_2$ change $S_r$ by a multiple of $2\pi l$, so we can fix a choice $I_{2,r}$ of $I_2$ in this neighbourhood by imposing $0 \leq \partial_l S_r(0) < 2\pi$. If we denote its Taylor series by $S_r^\infty$, then the $\nff$-tuple $S^\infty = (S_1^\infty, ..., S_\nff^\infty)$ is the \emph{Taylor series invariant}. In \S \ref{ss:comphei} we show that $h_r$ can also be related to $I_{2,r}(0)$.

Fix now a polygon $\De$ and its corresponding homeomorphism $f$. Then, for each $r=1,...,\nff$, the values of $f^{(2)}$ around $c_r$ will differ from those of $I_{2,r}$ by a multiple $\kappa_r \in \Z$ of $2\pi l$. The $\nff$-tuple $\kappa = (\kappa_1,...,\kappa_\nff)$ depends on the choice of $f$. The equivalence class of $\kappa$ under the $(\Z_2)^\nff \times \Z$-action that acts on $f$ determines the \emph{twisting index invariant}.

Consider now the following definition of isomorphism between semitoric systems:

\begin{de}
 Two semitoric systems $(M_1,\om_1,F_1)$ and $(M_2,\om_2,F_2)$ are said to be isomorphic if there exists a pair $(\varphi, \varrho)$, such that $\varphi:(M_1,\om_1) \to (M_2, \om_2)$ is a symplectomorphism and $\varrho: B_1 \to B_2$ is a diffeomorphism between $B_1 := F_1(M_1)$ and $B_2 := F_2(M_2)$ that satisfies $\varrho \circ F_1 = F_2 \circ \varphi$ and is of the form 
 $$\varrho(l,h) = (l,\varrho^{(2)}(l,h)),\quad \dfrac{\partial \varrho^{(2)}}{\partial h} >0.$$ The pair $(\varphi, \varrho)$ is called \emph{semitoric isomorphism}. 
\end{de}

Pelayo $\&$ \vungoc \cite{PV1, PV4} give a classification of simple semitoric systems up to isomorphism using the number of focus-focus points $\nff$ and the other four invariants introduced in this section.

\begin{theo}[Pelayo $\&$ \vungoc \cite{PV1, PV4}] There exists a symplectic classifications of simple semitoric systems in the following sense:
\begin{enumerate}[1)]
 \item 
 To each simple semitoric system we can associate the following five symplectic invariants, namely the number of focus-focus points $\nff$, the polygon invariant $[\Delta]$, the height invariant $h$, the Taylor series invariant $S^\infty$, and the twisting index invariant $[\kappa]$.
 \item
 Two semitoric systems are isomorphic if and only if their list of invariants coincide.
 \item
 Given a list of admissible invariants, there exists a simple semitoric system $(M, \om, F)$ that has that list as its list of invariants.
\end{enumerate}
\label{th:class}
\end{theo}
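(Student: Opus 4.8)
The plan is to prove the three assertions separately, as they are logically distinct: well-definedness of the five invariants, faithfulness, and realization. For the first, I would verify that each quantity is independent of the auxiliary choices entering its construction and is preserved by any semitoric isomorphism $(\varphi,\varrho)$. The number $\nff$ is intrinsic, being the count of non-degenerate rank-$0$ singularities of focus-focus type. For the polygon, one checks that the $(\Z_2)^\nff \times \Z$-action exhausts the ambiguity coming from the sign choices $\epsilon$ and the integer freedom in $I_2$, and that $\varrho$, preserving the first coordinate and the integral-affine structure on $B_{\text{reg}}$, carries one polygon to another in the same class. The height $h$ is choice-free since each $h_r$ is the intrinsic symplectic volume of $Y_r^-$; the Taylor series $S^\infty$ is pinned down by the normalization $0\le \partial_l S_r(0) <2\pi$; and the class $[\kappa]$ is defined so as to transform compatibly with $f$.

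The substance of the theorem is the non-trivial direction of the second assertion: equal invariants must force a semitoric isomorphism. I would build $(\varphi,\varrho)$ in stages. First, equality of the polygon invariants yields matching cartographic homeomorphisms $f_1,f_2$, hence an integral-affine identification $\varrho$ of the two bases that preserves $L$ and sends $c_r^{(1)}$ to $c_r^{(2)}$, with the matching heights guaranteeing that the focus-focus values sit at the same vertical position. Over $B_{\text{reg}}$ the action-angle theorem then supplies a fibre-preserving symplectomorphism $\varphi_{\text{reg}}$ lifting $\varrho$, determined up to the integer freedom in the second action; equality of $[\kappa]$ lets one fix this freedom consistently near every $c_r$. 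The decisive step is extending $\varphi_{\text{reg}}$ across the singular fibres: here I would invoke the semi-global normal form of \vungoc for a neighbourhood of a pinched torus, which asserts that such a neighbourhood is determined up to fibre-preserving symplectomorphism precisely by the germ $S_r^\infty$. Equal Taylor invariants then produce symplectomorphisms of the semi-global models agreeing with $\varphi_{\text{reg}}$ on the overlaps, and a Moser-type interpolation patches these local maps with $\varphi_{\text{reg}}$ into a global $\varphi$ intertwining the momentum maps.

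For the realization assertion I would run the construction backwards. Starting from an admissible polygon $\Delta$, its edges define a toric model over the part of $\Delta$ away from the cut lines; at each prescribed focus-focus value I would perform a nodal trade, inserting a pinched-torus fibre whose semi-global germ realizes the given $S_r^\infty$, with the vertical placement of the node chosen so that the volume of $Y_r^-$ reproduces $h_r$, and with the gluing data between the node charts and the toric action $I_2$ chosen to realize $\kappa_r$. It then remains to check that these pieces assemble into an honest semitoric system (compact exactly when $\Delta$ is compact) and that reading off its invariants returns the prescribed list.

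The main obstacle, common to the last two assertions, is everything that happens at the focus-focus fibres: the toric and action-angle machinery only controls the regular locus, while the entire content of the classification is concentrated in the semi-global germ near the pinched torus, whose complete symplectic invariant is exactly $S_r^\infty$. Thus the argument rests on the semi-global uniqueness and existence theorems of \vungoc for focus-focus singularities, together with the affine bookkeeping via $h$ and $[\kappa]$ needed to glue these germs onto the toric skeleton compatibly with the monodromy encoded in $[\Delta]$.
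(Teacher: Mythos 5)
You should first be aware that the paper does \emph{not} prove Theorem \ref{th:class}: it is stated purely as background and attributed to Pelayo \& \vungoc \cite{PV1, PV4}, whose two papers (uniqueness in \cite{PV1}, existence in \cite{PV4}) constitute the proof. So there is no proof in this paper to compare yours against; your attempt can only be measured against the original argument, of which it is a reasonable roadmap --- the division into well-definedness, uniqueness via gluing of semi-global focus-focus models onto the regular part, and existence via an inverse gluing construction is indeed the architecture of \cite{PV1, PV4}.

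Judged as a proof rather than a roadmap, however, it has genuine gaps, and they sit exactly where those two papers spend their effort. First, over $B_{\text{reg}}$ the action-angle theorem is only semi-local; to produce your global fibre-preserving symplectomorphism $\varphi_{\text{reg}}$ over the punctured base you need Duistermaat's global theory, in which the fibration is classified by its monodromy, its integral-affine structure, \emph{and} a Lagrangian Chern class. Matching polygons handle the affine structure and monodromy, but you never address why the Chern-class obstruction vanishes or matches (for semitoric systems this uses the global $\mbS^1$-action); without this, $\varphi_{\text{reg}}$ need not exist. Second, the ``Moser-type interpolation'' is not a routine patching step: the semi-global uniqueness theorem of \vungoc gives \emph{some} fibre-preserving symplectomorphism near each pinched torus, but it will generally disagree with $\varphi_{\text{reg}}$ on the overlap, and absorbing the discrepancy requires understanding the group of fibre-preserving symplectomorphisms of the overlap region; this is precisely where the twisting index enters as a genuine invariant rather than the ``bookkeeping'' role you assign it. Third, your existence sketch implicitly uses a semi-global \emph{existence} theorem --- that every admissible Taylor series is realized by some focus-focus germ --- which you never state, and it omits the verification that the glued object is smooth, non-degenerate, proper in $L$, and has the prescribed invariants. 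These are not cosmetic omissions: each is a substantial theorem, which is why the statement is cited in this paper rather than proved.
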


%%%%%%%%%%%%%%%%%%%%%%%%%%%%%%%%%%%%%%%%%%%%%%%%%%%%%%%%%%%%%%%%%%%%%%%%%
%%%%%%%%%%%%%%  new section  %%%%%%%%%%%%%%%%%%%%%%%%%%%%%%%%%%%%%%%%%%%%
%%%%%%%%%%%%%%%%%%%%%%%%%%%%%%%%%%%%%%%%%%%%%%%%%%%%%%%%%%%%%%%%%%%%%%%%%%%

\section{A symmetric family with two focus-focus points}

Consider $M=\mbS^2 \times \mbS^2$, together with the symplectic form $\omega = - (R_1\, \omega_{\mbS^2}  \oplus  R_2\, \omega_{\mbS^2})$, where $\omega_{\mbS^2}$ is the standard symplectic form of the unit sphere $\mbS^2$ and $R_1,R_2$ are two positive real numbers. Consider Cartesian coordinates $(x_1,y_1,z_1, x_2,y_2,z_2)$ on $M$, where $(x_i,y_i,z_i),$ $i=1,2$, are Cartesian coordinates on the unit sphere $\mbS^2 \subset \R^3$. We define a 4-parameter family of integrable systems $(M,\om,(L,H))$, where $L,H:M \to \mathbb{R}$ are the smooth functions given by
\begin{equation}
\begin{cases}
L(x_1,y_1,z_1,x_2,y_2,z_2)\;:=R_1 z_1  + R_2 z_2, \\
H(x_1,y_1,z_1,x_2,y_2,z_2):=(1-2s_1)(1-s_2) z_1 + (1-2s_1)s_2 z_2 
\\ \hspace{4.3cm}+ 2(s_1+s_2-{s_1}^2 - {s_2}^2) (x_1x_2 + y_1y_2).\\
\end{cases}
\label{sysdef}
\end{equation}

The parameters $R_1,R_2$ are called \emph{geometric parameters}, because they are related to the symplectic manifold. The parameters $s_1,s_2 \in [0,1]$ are the \emph{coupling parameters} of the system. For now, we will assume that $R_2 > R_1$. The function $L$ represents the sum of the height functions on both spheres and its Hamiltonian vector field corresponds to a simultaneous rotation of both spheres around the vertical axis. The function $H$ corresponds to an interpolation among rotations around the vertical axis on the first sphere, the second sphere and the relative polar angle between the two position vectors, see Figure \ref{sysphase}. 

\begin{figure}[ht]
\centering
\includegraphics[width=12cm]{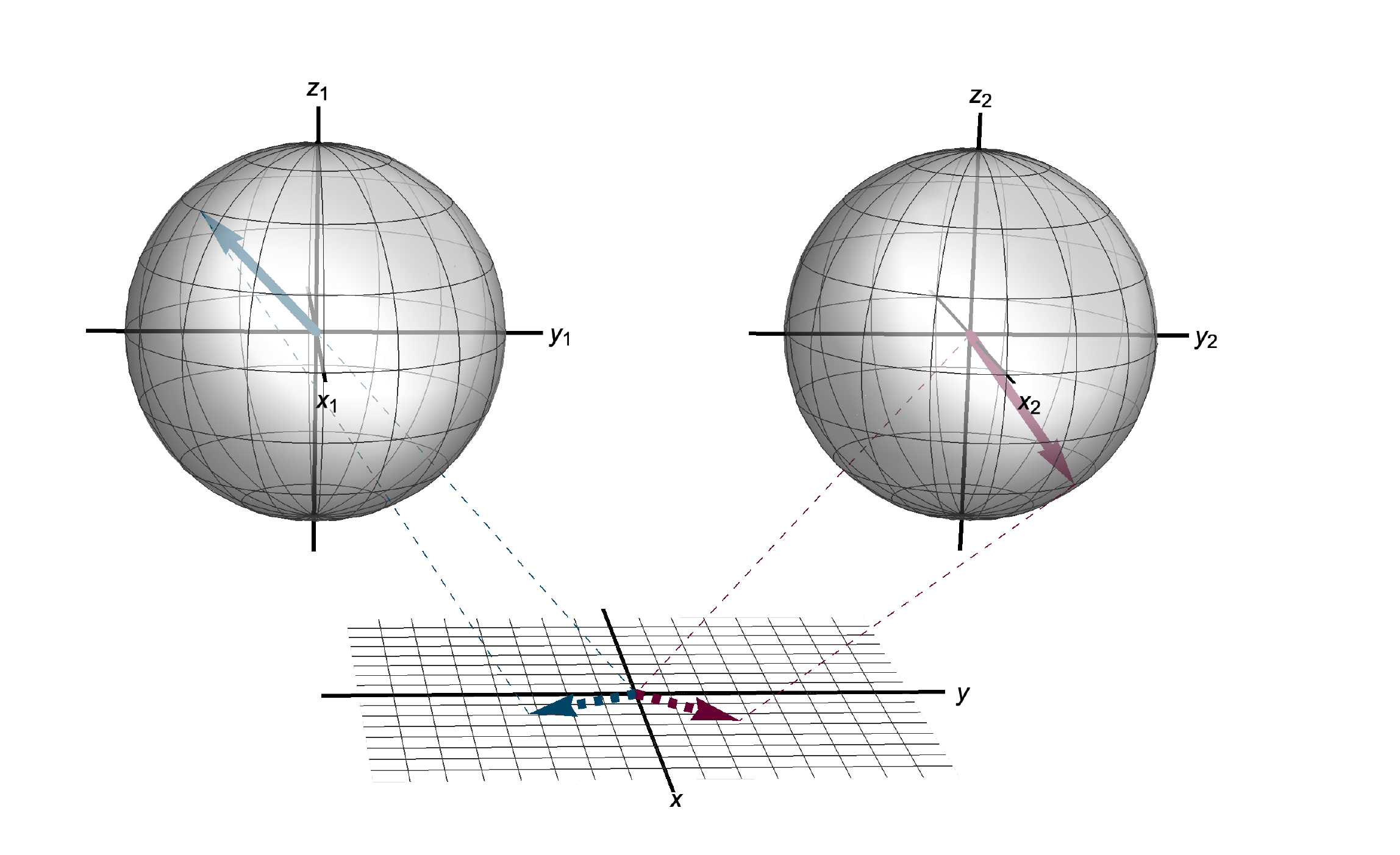}
\caption{\small{\textit{Representation of the system \eqref{sysdef}. The phase space is $M=\mbS^2 \times \mbS^2$, with position vectors $(x_1,y_1,z_1)$ in blue and $(x_2,y_2,z_2)$ in red. These vectors determine the value of the functions $L,H$.}}}
\label{sysphase}
\end{figure}

\begin{re}
The system \eqref{sysdef} is a particular case of the general family of systems
\begin{equation*}
\begin{cases}
L(x_1,y_1,z_1,x_2,y_2,z_2)\;:=R_1 z_1  + R_2 z_2, \\
H(x_1,y_1,z_1,x_2,y_2,z_2):=t_1 z_1 + t_2 z_2 + t_3 (x_1x_2 + y_1y_2) + t_4 z_1 z_2
\end{cases}
\end{equation*}
defined in Hohloch $\&$ Palmer \cite{HP} obtained by setting
$$ \begin{aligned}
&t_1 = (1-2s_1)(1-s_2) &\hspace{1.5cm}& t_3 = 2(s_1+s_2-{s_1}^2 - {s_2}^2) \\
&t_2 = (1-2s_1)s_2  && t_4 =0.
\end{aligned} $$
\label{Tparams}
\end{re}
\begin{pro}
The system $(M,\om,(L,H))$ defined by equation \eqref{sysdef} is completely integrable for all choices of radii $0<R_1<R_2$ and coupling parameters $s_1,s_2 \in [0,1]$.
\label{p:integrab}
\end{pro}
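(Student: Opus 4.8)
The plan is to verify, in turn, the three defining properties of Liouville integrability for $F=(L,H)$: that $F$ is smooth, that $\{L,H\}=0$, and that $dF$ has maximal rank almost everywhere. Smoothness is immediate, since $L$ and $H$ are restrictions to $M$ of polynomials in the ambient Cartesian coordinates. The only genuine computation is the Poisson bracket, so that is where I would spend the effort.

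For the bracket I would first record the Poisson structure on $M$. Because $M=\mbS^2\times\mbS^2$ is a product, functions pulled back from different factors Poisson-commute, so it suffices to know the bracket on each sphere separately. With the area form scaled as $-R_i\,\om_{\mbS^2}$ on the $i$-th factor, the linear coordinate functions satisfy the $\mathfrak{so}(3)$-relations scaled by $1/R_i$; in particular $\{z_i,x_i\}=-\tfrac{1}{R_i}y_i$ and $\{z_i,y_i\}=\tfrac{1}{R_i}x_i$ (and cyclically). Then $\{L,z_1\}=\{L,z_2\}=0$ is immediate, so the only term of $H$ that could contribute to $\{L,H\}$ is the coupling term $x_1x_2+y_1y_2$. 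Expanding $\{R_1z_1+R_2z_2,\,x_1x_2+y_1y_2\}$ by the Leibniz rule and the factor-wise brackets, each prefactor $R_i$ cancels against the $1/R_i$ coming from the corresponding bracket, and the four surviving monomials cancel in pairs: $-y_1x_2+x_1y_2-x_1y_2+y_1x_2=0$. Hence $\{L,H\}=0$. Conceptually, this cancellation is not accidental: $\mcX_L$ generates the simultaneous rotation of both spheres about the vertical axis, and $x_1x_2+y_1y_2$ — a function of the relative horizontal angle of the two position vectors only — is invariant under that rotation; this is the structural reason behind the algebraic cancellation, and it is exactly why the scaling of $\om$ by $R_i$ is chosen as it is.

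For the rank condition it suffices to show that $dL$ and $dH$ are linearly independent at a single point: since $L$ and $H$ are real-analytic (indeed polynomial), the locus $\{dL\wedge dH=0\}$ is an analytic subset of the connected manifold $M$, so it is either all of $M$ or a proper subset of measure zero. Thus I would exhibit one explicit regular point — any configuration away from the poles and the aligned/degenerate positions works — and check that the $2\times 4$ Jacobian of $F$ has rank two there, which rules out the first alternative. Alternatively, by Remark \ref{Tparams} the system is the $t_4=0$ specialisation of the family of Hohloch $\&$ Palmer \cite{HP}, so one may simply invoke the integrability established there.

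I do not expect a serious obstacle: this is a preparatory statement whose content is a short computation. The only place requiring genuine care is the Poisson bracket, where one must fix the sign and scaling conventions consistently so that the $R_1,R_2$ prefactors cancel cleanly; keeping those conventions straight is effectively the whole of the verification.
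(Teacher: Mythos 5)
Your proposal is correct, but it follows a genuinely different route from the paper. The paper computes almost nothing directly: via Remark \ref{Tparams} it invokes Theorem 3.1 of Hohloch $\&$ Palmer \cite{HP}, which gives complete integrability whenever $t_3 = 2(s_1+s_2-s_1^2-s_2^2) \neq 0$, and then disposes by hand of the four corner cases $(s_1,s_2)\in\{0,1\}^2$, where $t_3=0$ and $H$ degenerates to $\pm z_1$ or $\pm z_2$, so that $\{L,H\}=0$ and the rank condition are immediate. Your self-contained verification (the factor-wise brackets scaled by $1/R_i$, the pairwise cancellation in $\{L,\,x_1x_2+y_1y_2\}$, and the analyticity argument reducing the almost-everywhere rank condition to a single point) is uniform in the parameters and needs no case split; the rotational-invariance explanation of the cancellation is exactly the right structural reason, and the overall sign convention for the bracket is irrelevant to the conclusion. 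What your route costs is that you must actually produce the point where $dL\wedge dH\neq 0$, which your sketch leaves implicit, and a little care is needed: the most obvious choice, both spheres on the equator with \emph{aligned} horizontal positions, fails when $s_1=\tfrac{1}{2}$, since there $dH$ vanishes at that point. A configuration such as $(1,0,0)\times(0,1,0)$ works for \emph{all} $(s_1,s_2)\in[0,1]^2$: restricted to the tangent space one gets $dH=(1-2s_1)(1-s_2)\,dz_1+(1-2s_1)s_2\,dz_2+t_3(dx_2+dy_1)$, which is independent of $dL=R_1dz_1+R_2dz_2$ when $t_3\neq0$, and reduces to $\pm dz_i$ at the corners, again independent of $dL$. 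Finally, your one-line alternative (``simply invoke the integrability established in \cite{HP}'') is not complete as stated: that theorem assumes $t_3\neq 0$, which fails precisely at the four corners, and handling those corners is in fact the entire content of the paper's own proof.
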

\begin{proof}
Remark \ref{Tparams} allows us to use  \cite[Theorem 3.1]{HP}, which states that the system is completely integrable if $t_3 \neq 0$. We have that $t_3 = 2(s_1+s_2-{s_1}^2 - {s_2}^2)>0$ for all values $s_1,s_2 \in [0,1]$ except for the points $(s_1,s_2) \in \{0,1\}^2$. We investigate these four particular cases separately:
\begin{multicols}{2}
\underline{Case $(s_1,s_2)=(0,0)$:}
$$ \begin{cases}
L\;=R_1 z_1  + R_2 z_2, \\
H=z_1, \\
\end{cases}$$ 

\underline{Case $(s_1,s_2)=(0,1)$:}
$$ \begin{cases}
L\;=R_1 z_1  + R_2 z_2, \\
H=z_2,
\end{cases}$$ 
\underline{Case $(s_1,s_2)=(1,0)$:}
$$ \begin{cases}
L\;=R_1 z_1  + R_2 z_2, \\
H=-z_1, \\
\end{cases}$$ 

\underline{Case $(s_1,s_2)=(1,1)$:}
$$ \begin{cases}
L\;=R_1 z_1  + R_2 z_2, \\
H=-z_2.
\end{cases}$$ 
\end{multicols} Since $\{z_i,z_j\}=0$ for $i,j=1,2$, the functions $L$ and $H$ Poisson-commute: $\{L,H\}=0$. Moreover, since $R_1,R_2>0$, $DL$ is linearly independent of $\pm D z_i$ for $i=1,2$, so these four systems are also completely integrable. 
\end{proof}

The four extreme cases considered in the proof of Proposition \ref{p:integrab} are actually of \emph{toric type}, that is, toric up to a diffeomorphism on the base, cf.\ \vungoc \cite[Definition 2.1]{Vu2}. In particular, all their flows are periodic. This is because the flow of $H = \pm z_i$, $i=1,2$ corresponds to rotations around the vertical axis in the $i$-th sphere.

In Figure \ref{ff:arrayOfPics} we can see the evolution of the image of the momentum map $(L,H)$ as we move the coupling parameters $s_1,s_2$. The extreme cases correspond to the images on the four corners. 

\begin{figure}[ht]
\centering
\includegraphics[height=0.5\textheight]{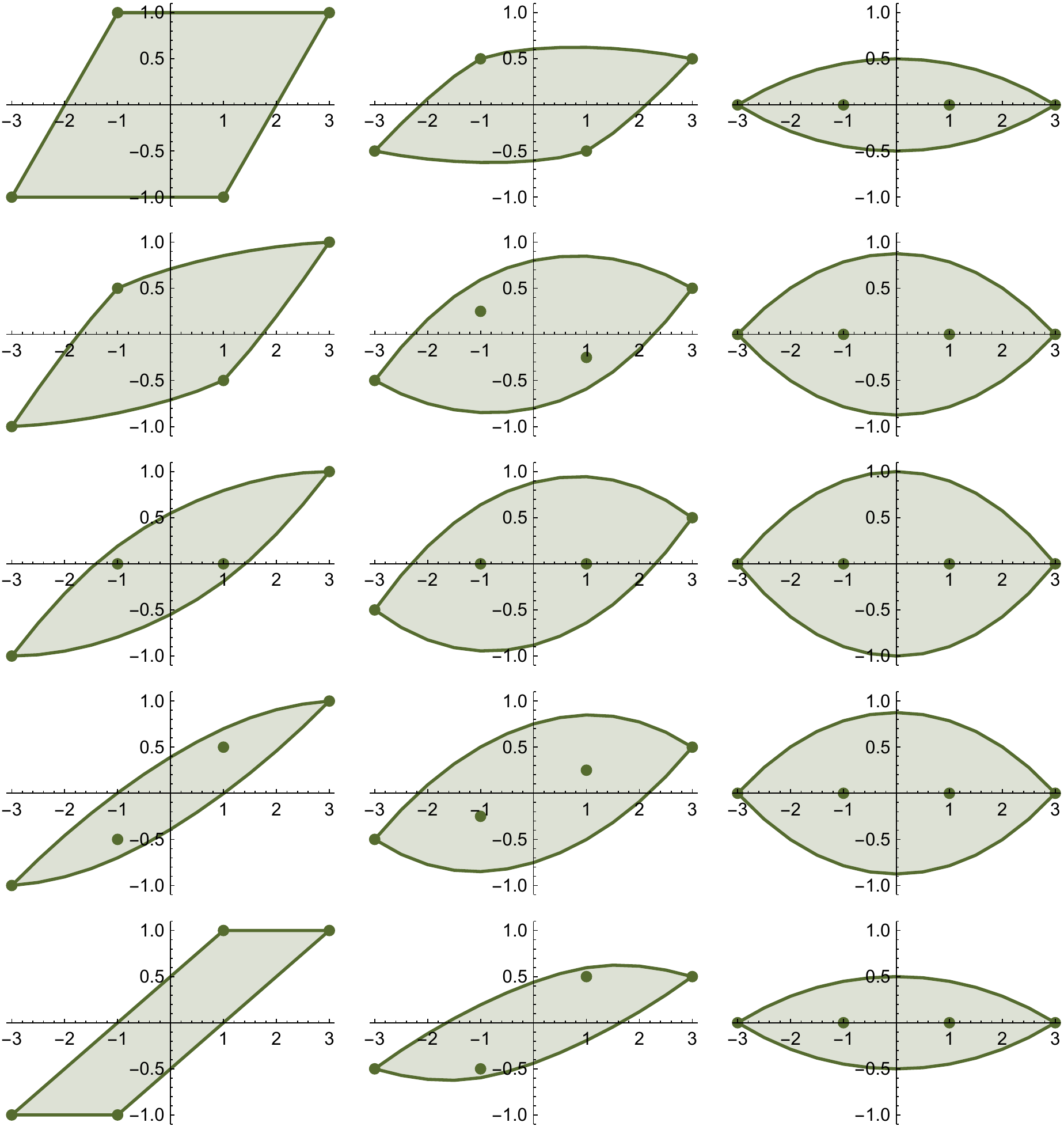}
\includegraphics[height=0.5\textheight]{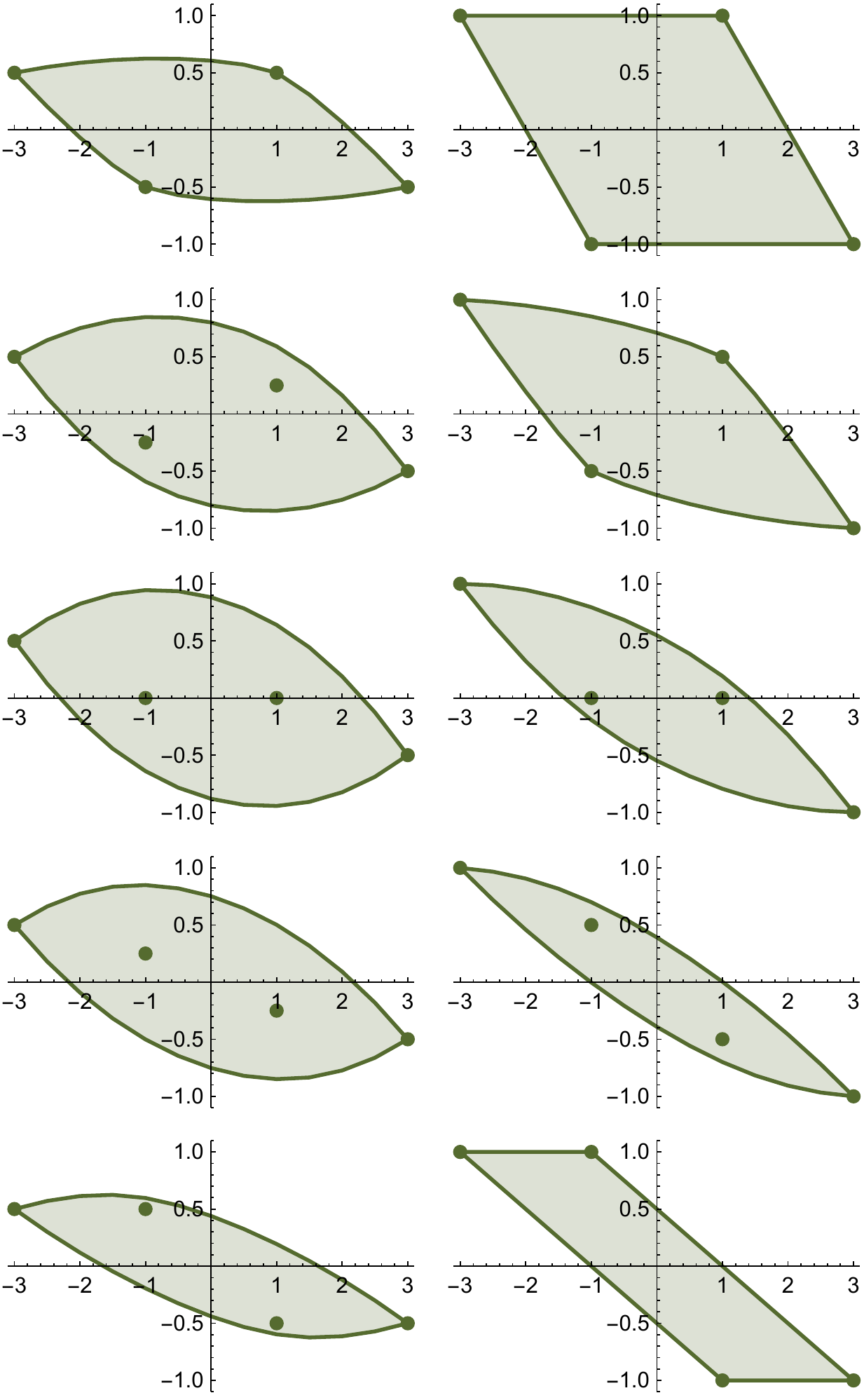}
\caption{\small{\textit{Image of the momentum map $F=(L,H)$ with $R_1=1$, $R_2=2$ for different values of $s_1,s_2 \in [0,1]^2$. The parameter $s_1$ varies horizontally from $0$ (left) to $1$ (right) and $s_2$ varies vertically, from $0$ (top) to $1$ (bottom).}}}
\label{ff:arrayOfPics}
\end{figure} 

\subsection{The number of focus-focus points}
\label{s:4X2nff}

The first symplectic invariant that we compute is the number of focus-focus points. 
\begin{pro}
The rank 0 fixed points of the system \eqref{sysdef} are the four products of poles: $N \times N, N \times S, S \times N$ and $S \times S$. The points $N\times N$ and $S \times S$ are always of elliptic-elliptic type. The points $N\times S$ and $S \times N$ are of elliptic-elliptic type if $E>0$ and of focus-focus type if $E<0$, where
\begin{equation}
\begin{aligned}
E=& {R_2}^2 (1 - 2 {s_1})^2 (-1 + {s_2})^2 + {R_1}^2 (1 - 2 {s_1})^2 {s_2}^2 - 
 2 {R_1} {R_2} (8 (-1 + {s_1})^2 {s_1}^2  \\&+ {s_2} - 
    12 (-1 + {s_1}) {s_1} {s_2} + (7 + 12 (-1 + {s_1}) {s_1}) {s_2}^2 - 16 {s_2}^3 + 
    8 {s_2}^4).
\end{aligned}
\label{eq:E}
\end{equation} The number of focus-focus singularities as a function of the system parameters $s_1,s_2$ is illustrated in Figure \ref{fig:4X2nff}.
\label{pro:nff}
\end{pro}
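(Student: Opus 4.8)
The plan is to separate the statement into two tasks: locating the rank-$0$ points, and classifying each of them via the linearisation of the Hamiltonian flow.

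First I would locate the rank-$0$ fixed points. A point is rank $0$ exactly when both $DL$ and $DH$ vanish. Since $L=R_1 z_1 + R_2 z_2$ generates the simultaneous rotation of the two spheres about the vertical axis, $\mcX_L$, and hence $DL$, vanishes precisely at the four products of poles $N\times N$, $N\times S$, $S\times N$, $S\times S$. At each of these, every summand of $H$ has vanishing differential: the height functions $z_1,z_2$ are critical at the poles, and the coupling term $x_1x_2+y_1y_2$ has differential $x_2\,\dee x_1 + x_1\,\dee x_2 + y_2\,\dee y_1 + y_1\,\dee y_2$, which vanishes when all four equatorial coordinates are zero. Thus $DH=0$ there as well, so these four points are exactly the rank-$0$ points.

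Next I would classify each point by linearising $\mcX_H$. Near a pole of the $i$-th sphere I would use $(x_i,y_i)$ as local coordinates, with $z_i = \pm\bigl(1-\tfrac12(x_i^2+y_i^2)\bigr)+\cdots$, the sign depending on north/south, so that the symplectic form is to leading order $\mp R_i\,\dee x_i\wedge \dee y_i$, the orientation sign again depending on the pole. Writing $a,b$ for the diagonal Hessian coefficients of $H$ coming from $z_1,z_2$ and $c=2(s_1+s_2-s_1^2-s_2^2)$ for the off-diagonal coefficient from the coupling term, the linearisation $A=\Omega^{-1}\mathrm{Hess}(H)$ takes, after reordering the coordinates as $(x_1,x_2,y_1,y_2)$, the block form $\begin{pmatrix} 0 & P \\ -P & 0\end{pmatrix}$ for a $2\times2$ matrix $P$. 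Its characteristic polynomial is $\det(\lambda^2 I + P^2)$, so the eigenvalues of $A$ are $\pm i\nu_1,\pm i\nu_2$, where $\nu_1,\nu_2$ are the eigenvalues of $P$. Hence the point is of elliptic-elliptic type when $P$ has real eigenvalues (all four $\lambda$ purely imaginary) and of focus-focus type when $P$ has a complex-conjugate pair (the $\lambda$ forming a focus-focus quadruplet $\pm\beta\pm i\alpha$); no hyperbolic case can occur, consistent with the semitoric hypothesis. The type is therefore decided by the sign of the discriminant $D=(\operatorname{tr}P)^2-4\det P$.

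The decisive feature is that the \emph{relative} orientation of the two symplectic factors controls how the coupling coefficient $c$ enters $P$. For $N\times N$ and $S\times S$ the two poles carry matching orientation, and one finds $D=(a/R_1-b/R_2)^2+4c^2/(R_1R_2)$, a sum of a square and a non-negative term, so $D\ge 0$ always and these points are always elliptic-elliptic. For $N\times S$ and $S\times N$ the orientations are opposite, one block of $\Omega^{-1}$ flips sign, and the analogous computation gives $D=(a/R_1+b/R_2)^2-4c^2/(R_1R_2)$, which may be negative. Substituting the explicit values ($a=-(1-2s_1)(1-s_2)$, $b=(1-2s_1)s_2$ at $N\times S$, and the orientation-swapped counterparts at $S\times N$, which give the same trace and determinant) and clearing denominators, I would verify the polynomial identity $R_1^2R_2^2\,D=E$. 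This yields $E<0\Leftrightarrow$ focus-focus, $E>0\Leftrightarrow$ elliptic-elliptic, and $E=0\Leftrightarrow$ the linearisation is degenerate, i.e.\ the singularity fails to be non-degenerate and the system fails to be semitoric. I expect the main obstacle to be the careful bookkeeping of the orientation signs of $\omega_{\mbS^2}$ at the north versus south poles, since it is exactly this that produces the $+4c^2$ versus $-4c^2$ dichotomy distinguishing the always-elliptic pairs from the orientation-reversed ones, together with the routine but lengthy expansion matching $R_1^2R_2^2\,D$ with the stated polynomial $E$.
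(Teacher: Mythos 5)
Your proposal is essentially a self-contained re-derivation of the linearisation machinery that the paper simply cites from Hohloch--Palmer: the paper takes the characteristic polynomial of $A_H=\Omega^{-1}D^2H$ and its discriminant criterion as a black box (\cite[Cor.~3.6, Prop.~3.7]{HP}) and spends its effort on sign analysis, whereas you reconstruct the block structure $\bigl(\begin{smallmatrix} 0 & P\\ -P & 0\end{smallmatrix}\bigr)$ by hand. Your structural claims are correct: the eigenvalues of $A_H$ are $\pm i\nu$ for $\nu$ an eigenvalue of $P$; matching orientations give $D=(a/R_1-b/R_2)^2+4c^2/(R_1R_2)\ge 0$ at $N\times N$, $S\times S$, while opposite orientations give $D=(a/R_1+b/R_2)^2-4c^2/(R_1R_2)$ at $N\times S$, $S\times N$; and indeed $R_1^2R_2^2\,D=E$, since the paper's discriminant equals your $D$ multiplied by the nonnegative factor $(\operatorname{tr}P)^2$, so the sign conditions agree wherever $\operatorname{tr}P\neq 0$. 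For $N\times N$ and $S\times S$ your sum-of-squares identity is in fact slicker than the paper's treatment, which establishes positivity of the corresponding factor by a lengthy critical-point analysis over the whole parameter box.

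The genuine gap is that your dictionary (``$P$ has real eigenvalues $\Rightarrow$ elliptic-elliptic, complex pair $\Rightarrow$ focus-focus'') is valid only when the four eigenvalues of $A_H$ are distinct and nonzero, and in this family that fails at interior parameter values, most importantly along the entire line $s_1=\tfrac12$. There $t_1=t_2=0$, so $a=b=0$ and $\operatorname{tr}P=0$: at $N\times S$ and $S\times N$ the eigenvalues of $P$ are purely imaginary, hence the four eigenvalues of $A_H$ are \emph{real and doubled} --- not a focus-focus quadruplet, so your parenthetical claim is false there, and your aside that ``no hyperbolic case can occur, consistent with the semitoric hypothesis'' is both false for $A_H$ at this locus and circular, since semitoricity is what is being proven. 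Likewise at $N\times N$, $S\times S$ with $s_1=\tfrac12$ the eigenvalues of $A_H$ are doubled imaginary pairs, and along the curve $ab=c^2$ (e.g.\ $(s_1,s_2)=(0,\tfrac12)$) $A_H$ has a doubled zero eigenvalue. In all of these cases neither non-degeneracy nor the Williamson type can be read off $A_H$ alone: they are properties of the pencil spanned by $A_L$ and $A_H$. This is precisely why the paper splits its proof into the cases $s_1\neq\tfrac12$ and $s_1=\tfrac12$, and in the latter computes the characteristic polynomial of $A_L+A_H$ and applies the discriminant criterion to that combination instead. Your argument needs the same supplementary step (or a proof that some combination $A_L+tA_H$ has four distinct eigenvalues) before the classification --- in particular the focus-focus claim at $s_1=\tfrac12$, which lies inside the region $E<0$ --- is actually established.
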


\begin{figure}[ht]
\centering
\includegraphics[width=7cm]{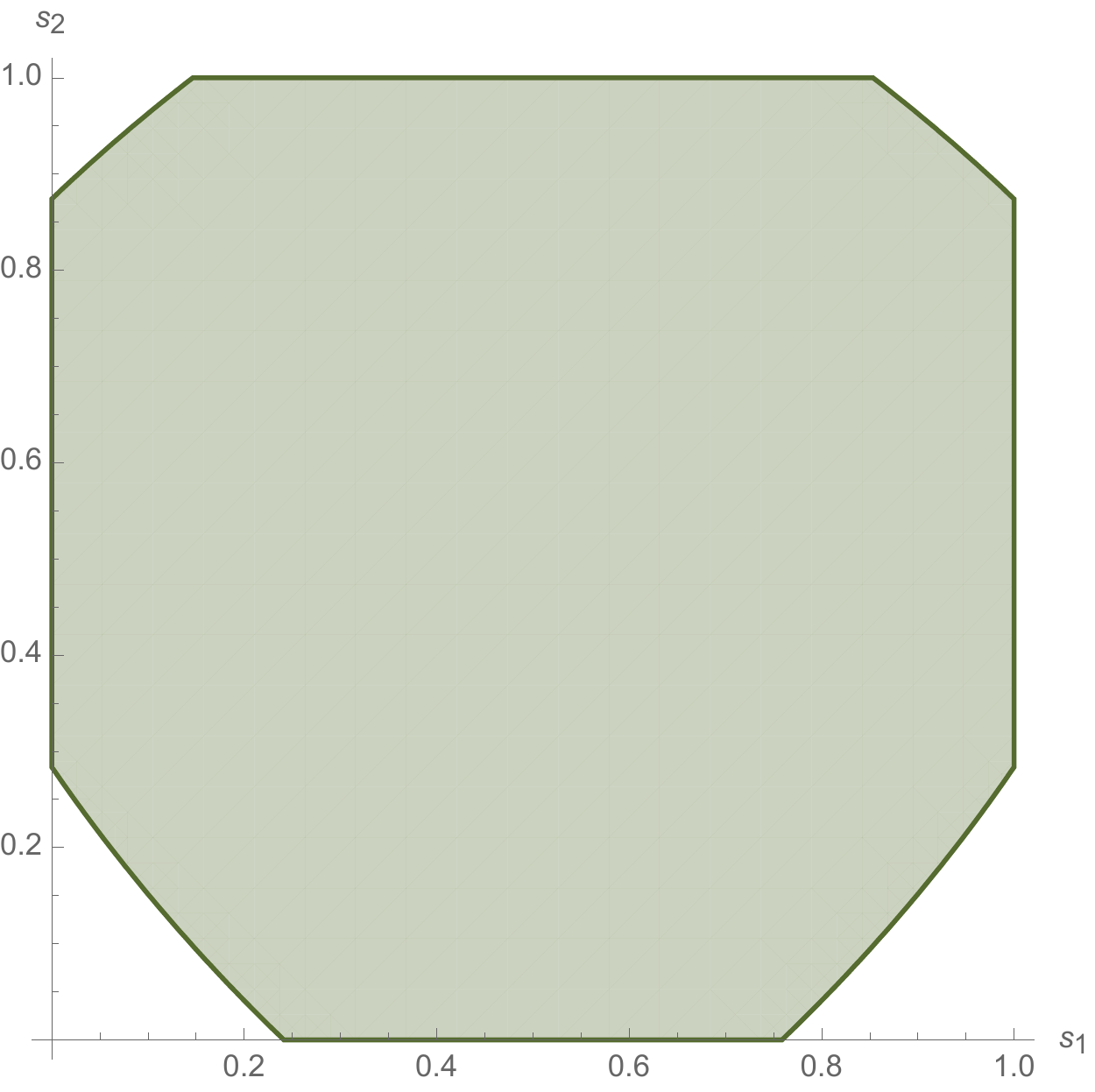}
\caption{\small{\textit{Representation of the number of focus-focus points for $R_1=1$, $R_2=2$ as a function of the coupling parameters $s_1,s_2$. In the green area, we have $\nff=2$ and, in the white one, we have $\nff=0$.}}}
\label{fig:4X2nff}
\end{figure}

\begin{proof}
From \cite[Lemma 3.4]{HP} we know that the product of poles, $N \times N$, $S \times S$, $N \times S$ and $S \times N$, are precisely the rank 0 singularities of the system. From \cite[Corollary 3.6]{HP} we know that the characteristic polynomial of $A_H = \Om^{-1} D^2H$ is
\begin{align*}
\chi(X) =& X^4 + \dfrac{1}{{R_1}^2 {R_2}^2} \left({R_1}^2 ({t_2} + {z_1} {t_4})^2 + 2 {z_1} {z_2} {R_1} {R_2} {t_3}^2 + 
    {R_2}^2 ({t_1} + {z_2} {t_4})^2 \right) X^2 \\&+ 
 \dfrac{1}{{R_1}^2 {R_2}^2} \left(({t_2} + {z_1} {t_4})^2 ({t_1} + {z_2} {t_4})^2 - 
    2 {z_1} {z_2} ({t_2} + {z_1} {t_4}) ({t_1} + {z_2} {t_4}) {t_3}^2 + {t_3}^4 \right) 
\end{align*} which is a quadratic polynomial in $Y := X^2$ with discriminant
\begin{align*}
 D&= \dfrac{1}{{R_1}^2 {R_2}^2} \left({R_1}^2 ({t_2} - {z_1} {t_4})^2 + 2 {z_1} {z_2} {R_1} {R_2} {t_3}^2 + 
      {R_2}^2 ({t_1} + {z_2} {t_4})^2)\right)^2 \\&- 
 \dfrac{4}{{R_1}^2 {R_2}^2} \left(({t_2} + {z_1} {t_4})^2 ({t_1} + {z_2} {t_4})^2 - 
    2 {z_1} {z_2} ({t_2} - {z_1} {t_4}) ({t_1} + {z_2} {t_4}) {t_3}^2 + {t_3}^4 \right).
\end{align*} The rank 0 criterion \cite[Proposition 3.7]{HP} tells us that a rank 0 singularity is non-degenerate of focus-focus type if $D<0$ and non-degenerate of elliptic-elliptic type if $D>0$. We consider four different cases:
\begin{itemize}
\item \underline{\textit{Case $N \times N$ and $S \times S$ for $s_1 \neq \tfrac{1}{2}$:}}

In this case, the discriminant becomes 
\begin{align*}
D=& \dfrac{1}{{R_1}^4 {R_2}^4}(1 - 2 {s_1})^2 ({R_2} + {R_1} {s_2} - 
   {R_2} {s_2})^2  \\& \quad \times \left({R_2}^2 (1 - 2 {s_1})^2 (-1 + {s_2})^2 + {R_1}^2 (1 - 2 {s_1})^2 {s_2}^2 \right. \\& \qquad+ \left. 
   2 {R_1} {R_2} (-16 {s_1}^3 + 8 {s_1}^4 - 20 {s_1} (-1 + {s_2}) {s_2} + 
      4 {s_1}^2 (2 + 5 (-1 + {s_2}) {s_2}) \right. \\& \qquad    \left.+ (-1 + {s_2}) {s_2} (1 + 
         8 (-1 + {s_2}) {s_2}))\right).
\end{align*}

The first three factors are always strictly positive. We divide the fourth factor by ${R_1}^2$ and express it as a function of $s_1, s_2$ and $R := \tfrac{R_2}{R_1}$:
\begin{align*}
\bar{D} =& R^2 (1-2s_1)^2 (1-s_2)^2 + (1-2s_1)^2{s_2}^2 + 2R (-16 {s_1}^3 + 8 {s_1}^4 \\&- 20 {s_1} (-1 + {s_2}) {s_2} + 
      4 {s_1}^2 (2 + 5 (-1 + {s_2}) {s_2}) + (-1 + {s_2}) {s_2} (1 + 8 (-1 + {s_2}) {s_2})).
\end{align*} 

Our goal is to show that this smooth factor is positive in the region defined by $0 \leq s_1,s_2 \leq 1$ and $R>1$. The value of $\bar{D}$ at the vertices $R=1$, $(s_1,s_2) \in \{0,1\}^2$ is 1. Now we look at the eight edges of the region:
\begin{enumerate}[\textbullet]
	\item \underline{$R=1,$ $s_1=0,1$}: There are three critical points: a maximum at $s_2=\tfrac{1}{2}$ with value $1$ and two minima at $s_2=\tfrac{1}{4}(2 \pm \sqrt{2})$ with value $\tfrac{3}{4}$. 
	\item \underline{$R=1,$ $s_2=0,1$}: Same as above, but in $s_1=\tfrac{1}{2}$ and $s_1=\tfrac{1}{4}(2 \pm \sqrt{2})$.
	\item \underline{$s_1=0,1$, $s_2=0,1$}: No critical points. 
\end{enumerate}
Now we consider the five faces of the region:
\begin{enumerate}[\textbullet]
	\item \underline{$R=1$}: There are five critical points: a maximum at $(s_1,s_2)=(\tfrac{1}{2},\tfrac{1}{2})$ with value 4 and four saddle points at $(s_1,s_2) = (\tfrac{1}{10}(5 \pm 2 \sqrt{5}), \tfrac{1}{10}(5 \pm 2 \sqrt{5}))$ with value $\tfrac{4}{5}$. 
	\item \underline{$s_1=0,1$}: There is only one critical point at $(s_2,R) = (\tfrac{1}{12}(7 + \sqrt{13}), \tfrac{1}{9}(5 + 2\sqrt{13}))$, which is a saddle with value $\tfrac{587 + 143\sqrt{13}}{1458} >0$. 
	\item \underline{$s_2=0,1$}: There are no critical points.
\end{enumerate}

Finally, there are no local extrema in the interior of the region, $\bar{D}$ just grows indefinitely as $R \to \infty$. We conclude thus that $\bar{D}$ is positive in all the region. This means that the discriminant $D$ is positive, too, and therefore the singularities are non-degenerate and of elliptic-elliptic type. 
\item \underline{\textit{Case $N \times N$ and $S \times S$ for $s_1 = \tfrac{1}{2}$:}}

In this case the discriminant $D$ vanishes, so we may compute instead the characteristic polynomial of $A_L+A_H = \Om^{-1} (D^2L + D^2H)$, which is
\begin{align*}
\dfrac{(1 + 4 {s_2} - 4 {s_2}^2)^4 + 8 {R_1} {R_2} (1 + 4 {s_2} - 4 {s_2}^2)^2 (-1 + X^2) + 
 16 {R_1}^2 {R_2}^2 (1 + X^2)^2}{16 {R_1}^2 {R_2}^2}.
\end{align*} It is also quadratic in $Y=X^2$ and has discriminant
\begin{align*}
\dfrac{4 (1 + 8 {s_2} + 8 {s_2}^2 - 32 {s_2}^3 + 16 {s_2}^4)}{{R_1} {R_2}}>0.
\end{align*} Therefore, the singularities are non-degenerate and of elliptic-elliptic type.

\item \underline{\textit{Case $N \times S$ and $S \times N$ for $s_1\neq  \tfrac{1}{2}$:}}
In this case the discriminant becomes:
\begin{align*}
D&=\dfrac{1}{{R_1}^4 {R_2}^4} (1 - 2 {s_1})^2 ({R_2} + {R_1} {s_2} - 
   {R_2} {s_2})^2  \\& \quad \times  \left({R_2}^2 (1 - 2 {s_1})^2 (-1 + {s_2})^2 + 
   {R_1}^2 (1 - 2 {s_1})^2 {s_2}^2 \right. \\& \qquad \left.- 
   2 {R_1} {R_2} (8 (-1 + {s_1})^2 {s_1}^2 + {s_2} - 
      12 (-1 + {s_1}) {s_1} {s_2} \right. \\& \qquad \left.+ (7 + 12 (-1 + {s_1}) {s_1}) {s_2}^2 - 16 {s_2}^3 + 
      8 {s_2}^4)\right). 
\end{align*} Since $s_1 \neq \tfrac{1}{2}$, the first three factors are always positive. The fourth factor, which we denote by $E$, determines a closed curve $\symbga$. Outside the curve, the discriminant is positive and therefore the singularities are of elliptic-elliptic type. Inside the curve, the discriminant is negative and therefore the singularities $N \times S$ and $S \times N$ are of focus-focus type.

\item \underline{\textit{Case $N \times S$ and $S \times N$ for $s_1=  \tfrac{1}{2}$:}}

In this case the discriminant $D$ vanishes, so we can compute instead the characteristic polynomial of $A_L+A_H = \Om^{-1} (D^2L + D^2H)$, which is
\begin{align*}
\dfrac{(1 + 4 {s_2} - 4 {s_2}^2)^4 - 8 {R_1} {R_2} (1 + 4 {s_2} - 4 {s_2}^2)^2 (-1 + X^2) + 
 16 {R_1}^2 {R_2}^2 (1 + X^2)^2}{16 {R_1}^2 {R_2}^2},
\end{align*} which is also quadratic in $Y=X^2$ and has discriminant
\begin{align*}
-\dfrac{4 (1 + 8 {s_2} + 8 {s_2}^2 - 32 {s_2}^3 + 16 {s_2}^4)}{{R_1} {R_2}}<0.
\end{align*} Therefore, the singularities are non-degenerate and of focus-focus type.
\end{itemize}
\end{proof}

We now look at the singularities of rank 1. 

\begin{pro}
The system \eqref{sysdef} has only singularities of rank 1 that are non-degenerate and of elliptic-regular type, for any choice $(s_1,s_2) \in [0,1]^2$. 
\label{pro:rank1}
\end{pro}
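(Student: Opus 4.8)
The plan is to analyze the rank 1 singularities directly from the structure of the momentum map $F=(L,H)$ on $M=\mbS^2\times\mbS^2$. A rank 1 singular point $p$ is one where $DL$ and $DH$ are linearly dependent but not both zero; since no rank 0 point (classified in Proposition \ref{pro:nff}) is of rank 1, I may work away from the four products of poles. First I would use the fact that $L=R_1z_1+R_2z_2$ generates an effective $\mbS^1$-action by simultaneous rotation about the vertical axis, so $DL$ never vanishes except at the poles. Hence at any rank 1 point we must have $DH=\lambda\, DL$ for some $\lambda\in\R$, i.e.\ $p$ is a critical point of $H$ restricted to a level set of $L$, or equivalently a critical point of $H-\lambda L$. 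I would parametrize the regular part of each sphere by the cylindrical coordinates $(z_i,\theta_i)$ with $x_i+iy_i=\sqrt{1-z_i^2}\,e^{i\theta_i}$, in which $\omega_{\mbS^2}=d\theta_i\wedge dz_i$, and write $x_1x_2+y_1y_2=\sqrt{(1-z_1^2)(1-z_2^2)}\cos(\theta_1-\theta_2)$.

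The key steps, in order, are: (1) compute the Lagrange multiplier conditions $\partial_{\theta_i}(H-\lambda L)=0$ and $\partial_{z_i}(H-\lambda L)=0$; the angular equations force $\sin(\theta_1-\theta_2)=0$, so $\theta_1-\theta_2\in\{0,\pi\}$, reducing the coupling term to $\pm\sqrt{(1-z_1^2)(1-z_2^2)}$ on the rank 1 stratum. (2) Solve the two radial equations for the critical set; these cut out the one-parameter families of rank 1 critical points that form the edges of the momentum image seen in Figure \ref{ff:arrayOfPics}. (3) At each such point verify non-degeneracy and the elliptic-regular type. For this I would use the same Williamson-type criterion underlying \cite[Proposition 3.7]{HP}: a rank 1 non-degenerate singularity has, in the symplectic transverse to the orbit, a linearization whose eigenvalues are a purely imaginary (elliptic) pair, equivalently the relevant $2\times2$ transverse Hessian of $H-\lambda L$ restricted to the symplectic complement of $\mcX_L$ is sign-definite. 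Concretely, I would compute the Hessian of $H-\lambda L$ on the two-dimensional transverse and show its determinant is strictly positive, ruling out both degeneracy (zero determinant) and hyperbolic components (negative determinant).

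An alternative and cleaner route, which I would prefer to streamline the verification, is to invoke the general structure theory: the only non-degenerate rank 1 singularities allowed in a semitoric system are elliptic-regular, so it suffices to prove that \emph{every} rank 1 singularity is non-degenerate. Since $L$ generates a free $\mbS^1$-action away from the poles, one may symplectically reduce at a regular value of $L$ to a two-dimensional reduced space and observe that the reduced Hamiltonian induced by $H$ is a Morse function there; the critical points of $H$ on the reduced space correspond exactly to the rank 1 singularities, and Morseness gives non-degeneracy, while absence of hyperbolic components follows because a non-degenerate critical point of a function on a \emph{two}-dimensional symplectic reduced space can only be elliptic (a single transverse degree of freedom admits only an elliptic block, never a hyperbolic or focus-focus one). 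This reduces the whole statement to checking that the reduced Hamiltonian has only non-degenerate critical points, which I would confirm by the explicit computation of step (1)--(2).

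The main obstacle I anticipate is the degenerate locus: one must ensure that along the boundary curve $\symbga$ defined by $E=0$ in Proposition \ref{pro:nff}, where rank 0 focus-focus points are born or die, no rank 1 singularity simultaneously becomes degenerate, and similarly that the angular branch $\theta_1-\theta_2=\pi$ does not introduce a spurious hyperbolic transverse block. Handling the case $s_1=\tfrac12$ (where, as in Proposition \ref{pro:nff}, the leading discriminant collapses) will again require passing to the characteristic polynomial of $A_L+A_H$ rather than $A_H$ alone, to certify non-degeneracy. I expect the algebra to be verifiable symbolically, consistent with the paper's remark that all computations were checked in \emph{Mathematica}.
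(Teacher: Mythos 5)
Your preferred ``cleaner route'' rests on a false structural claim, and this is a genuine gap. It is not true that a non-degenerate critical point of a function on a \emph{two}-dimensional symplectic reduced space can only be elliptic: in one degree of freedom, Williamson's classification admits \emph{two} blocks, elliptic ($q^2+p^2$) and hyperbolic ($qp$); only focus-focus requires two degrees of freedom. A saddle point of the reduced Hamiltonian on $L^{-1}(l)/\mbS^1$ is a perfectly non-degenerate critical point, and it corresponds upstairs to a \emph{hyperbolic}-regular singularity of $(L,H)$. Hence Morseness of the reduced Hamiltonian does not exclude hyperbolic components, and your proposed reduction of the whole statement to ``check that the reduced Hamiltonian is Morse'' is invalid --- ruling out saddles is precisely the nontrivial content of the proposition, since semitoricity must be proved here, not assumed. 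The fix is to keep step (3) of your first route: you must show the transverse Hessian is sign-definite (positive $2\times 2$ determinant), not merely that its determinant is nonzero.

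With step (3) retained, your first route is sound in outline, but note that it amounts to re-deriving by hand the criterion the paper simply cites: the paper invokes \cite[Proposition 3.14]{HP} (the rank-1 criterion --- Proposition 3.7 of that paper, which you reference, is the rank-0 criterion), which packages your steps (1)--(3) into the single inequality
\begin{equation*}
\dfrac{2t_4 R_1}{t_3 R_2}\cos(\vartheta) \;>\; \dfrac{2\mcB''(z_1)\mcB(z_1)-(\mcB'(z_1))^2}{4(\mcB(z_1))^{3/2}},
\qquad \mcB(z_1)=(1-z_1^2)(1-z_2^2).
\end{equation*}
Since $t_4=0$ for this family, the left-hand side vanishes identically, and the entire proof collapses to showing the right-hand side is strictly negative; the paper does this by bounding the numerator $R_1^2(1-z_1^2)^2+2z_1z_2R_1R_2(1-z_1^2)(1-z_2^2)+R_2^2(1-z_2^2)^2$ between $(\alpha-\beta)^2$ and $(\alpha+\beta)^2$, where $\alpha=R_1(1-z_1^2)$, $\beta=R_2(1-z_2^2)$, using $-1<z_1z_2<1$. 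If you pursue your route, you must carry out the analogous uniform positivity estimate for the transverse Hessian determinant over all $(s_1,s_2)\in[0,1]^2$, all levels $l$, and both branches $\cos(\theta_1-\theta_2)=\pm 1$; your proposal defers exactly this computation, which is where all the content lies. Finally, your anticipated need for the $A_L+A_H$ trick at $s_1=\tfrac12$ is misplaced: that device is required only for the rank-0 points, where the discriminant of the characteristic polynomial of $A_H$ degenerates; the rank-1 criterion above is uniform in the coupling parameters.
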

\begin{proof}
We make use of \cite[Proposition 3.14]{HP}, that provides a criterion for the singularities of rank 1. More specifically, let $l$ be the fixed value of the function $L$, i.e., $l:=R_1 z_1 + R_2 z_2$ and set $\vartheta:= \theta_1-\theta_2$, where $\theta_1,\theta_2$ are the polar angles on $\mbS^2 \times \mbS^2$. We consider the symplectic reduction of the system \eqref{sysdef} on the level $L^{-1}(l)$. Note that from \cite[Lemma 3.10]{HP}, rank 1 singularities always satisfy $z_1,z_2 \neq \pm 1$. Define $$\mcB(z_1) := (1-{z_1}^2) \left( 1-\left( \dfrac{l-R_1 z_1}{R_2} \right)^2 \right) = (1-{z_1}^2)(1-{z_2}^2),$$ which is the content of the square root after the transformation of $H$ in the reduced coordinates $(\vartheta,z_1)$, cf.\ equation \eqref{eq:cyl} in the next section. Then the fixed points of rank 1 are non-degenerate and of elliptic-regular type if
\begin{equation}
\dfrac{2t_4 R_1}{t_3 R_2} \cos(\vartheta) > \dfrac{2 \mcB''(z_1) \mcB(z_1) - (\mcB'(z_1))^2}{4(\mcB(z_1))^{\frac{3}{2}}}.
\label{eq:rhs}
\end{equation} 
In our case, $t_4=0$, so the left hand side vanishes (cf.\ Remark \ref{Tparams}). The right hand side is
$$ - \dfrac{{R_1}^2(1-{z_1}^2)^2 + 2z_1 z_2 R_1 R_2 (1-{z_1}^2)(1-{z_2}^2) + {R_2}^2(1-{z_2}^2)^2}{{R_2}^2 (1-{z_1}^2)^{\frac{3}{2}}(1-{z_2}^2)^{\frac{3}{2}} }. $$ The numerator lies always between $-(\al + \be)^2$ and $-(\al - \be)^2$, where $\al := R_1 (1-{z_1}^2)$ and $\be := R_2 (1-{z_2}^2)$, because $-1 < z_1 z_2 < 1$, so it is always negative and the denominator is always positive. Thus, the right hand side of \eqref{eq:rhs} is always negative and the criterion \cite[Proposition 3.14]{HP}
can be applied. We conclude that all singularities of rank 1 are non-degenerate.
\end{proof}

\begin{theo}
The system \eqref{sysdef} is semitoric for almost any choice of coupling parameters $(s_1,s_2) \in [0,1]^2$. It only fails to be semitoric in the piecewise smooth curve defined by $E=0$, where $E$ is defined in equation \eqref{eq:E}. At this curve, the singularities $N \times S$ and $S \times N$ become degenerate.
\label{th:semit}
\end{theo}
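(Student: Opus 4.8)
The plan is to verify, for every $(s_1,s_2)\in[0,1]^2$ with $E\neq 0$, the four defining conditions of a simple semitoric system (Definition \ref{def:semitoric}), reusing the propositions already established, and then to check that precisely on $\{E=0\}$ exactly one of those conditions breaks down. Complete integrability is already granted by Proposition \ref{p:integrab} for all parameter values, so it needs no further attention. Properness of $L$ (condition 3) is immediate: since $M=\mbS^2\times\mbS^2$ is compact, the preimage under $L$ of any compact set is a closed subset of a compact space, hence compact. So the only conditions that require work are the non-degeneracy condition 1), the circle action 2), and simplicity 4).

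For the circle action I would argue that it is parameter-independent. Because the coefficient $R_i$ of $z_i$ in $L$ is exactly matched by the factor $R_i$ in the corresponding summand of $\om=-(R_1\,\om_{\mbS^2}\oplus R_2\,\om_{\mbS^2})$, the Hamiltonian vector field $\mcX_L$ restricts on each sphere to the generator of rotation about the vertical axis, with the $R_i$ cancelling. Thus the flow of $\mcX_L$ is the simultaneous rotation of both spheres by a common angle, which is $2\pi$-periodic for every choice of $R_1,R_2$, and it is effective since any point off the poles has trivial stabiliser. This is the circle action inherited from the general family of Hohloch $\&$ Palmer \cite{HP}, so condition 2) holds unconditionally.

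The heart of the argument is condition 1), non-degeneracy with no hyperbolic components, and this is exactly where the dichotomy between $E\neq 0$ and $E=0$ enters. The rank $1$ singularities are handled uniformly: Proposition \ref{pro:rank1} shows they are all non-degenerate of elliptic-regular type for every $(s_1,s_2)\in[0,1]^2$, so they never obstruct semitoricity and carry no hyperbolic block. For the rank $0$ fixed points I would invoke Proposition \ref{pro:nff}: $N\times N$ and $S\times S$ are always elliptic-elliptic, while $N\times S$ and $S\times N$ are elliptic-elliptic when $E>0$ and focus-focus when $E<0$, in both cases non-degenerate and free of hyperbolic components. When $E=0$ with $s_1\neq\tfrac12$, the factorised discriminant $D$ vanishes, so the quadratic in $Y=X^2$ acquires a double root and $N\times S$, $S\times N$ cease to be non-degenerate. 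Note that the locus $s_1=\tfrac12$ never meets $\{E=0\}$, since there $E=-R_1R_2(1+8s_2+8s_2^2-32s_2^3+16s_2^4)<0$ on $[0,1]$, in agreement with the focus-focus conclusion obtained from the auxiliary polynomial of $A_L+A_H$. Hence condition 1) holds precisely off $\{E=0\}$, and fails there through the degeneration of $N\times S$ and $S\times N$.

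Finally I would verify simplicity, condition 4), which is the one place the hypothesis $R_1\neq R_2$ is genuinely used. The only candidates for focus-focus singularities are $N\times S$ and $S\times N$, with $L(N\times S)=R_1-R_2$ and $L(S\times N)=R_2-R_1$; since $R_1\neq R_2$ these two values differ, so the at most two focus-focus points always lie in distinct level sets of $L$. Assembling the four verifications shows that the system is simple semitoric for every $(s_1,s_2)$ with $E\neq 0$, and not semitoric on $\{E=0\}$, where it violates condition 1). The set $\{E=0\}\cap[0,1]^2$ is the real zero locus of the polynomial \eqref{eq:E}, hence a piecewise smooth algebraic curve, smooth away from the finitely many points where the gradient of $E$ vanishes. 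I expect the main obstacle to be conceptual rather than computational: confirming that conditions 2), 3) and 4) hold for \emph{all} parameter values, so that condition 1) is the only one sensitive to the parameters and the failure of semitoricity is concentrated exactly on $\{E=0\}$. The decisive points are the cancellation of $R_i$ that keeps the circle action $2\pi$-periodic and effective uniformly, and the inequality $R_1\neq R_2$ that enforces simplicity.
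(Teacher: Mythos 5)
Your proposal is correct and follows essentially the same route as the paper, whose proof simply declares the theorem ``Immediate from Propositions \ref{p:integrab}, \ref{pro:nff} and \ref{pro:rank1}.'' In fact you are more thorough than the paper: you explicitly verify properness of $L$, the $2\pi$-periodicity and effectiveness of the circle action, and simplicity (which is where $R_1\neq R_2$ enters, via $L(N\times S)=R_1-R_2\neq R_2-R_1=L(S\times N)$), and you check that the locus $s_1=\tfrac12$ never meets $\{E=0\}$ --- all details the paper leaves implicit.
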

\begin{proof}
Immediate from Propositions \ref{p:integrab}, \ref{pro:nff} and \ref{pro:rank1}. The curve $E=0$ is the border of the coloured region in Figure \ref{fig:4X2nff}.
\end{proof}

\begin{co}
The number of focus-focus points invariant is determined by Proposition \ref{pro:nff} as $n_{FF}=2$ or $n_{FF}=0$ and displayed in Figure \ref{fig:4X2nff}.
\label{co:nff}
\end{co}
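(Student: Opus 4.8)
The plan is to read $\nff$ off directly from the case analysis already carried out in Proposition \ref{pro:nff}, since the number of focus-focus points invariant is by definition the total count of rank $0$ singularities of focus-focus type. First I would recall that Proposition \ref{pro:nff} identifies the only rank $0$ fixed points as the four products of poles $N\times N$, $N\times S$, $S\times N$ and $S\times S$, and that $N\times N$ and $S\times S$ are of elliptic-elliptic type for every choice of parameters. These two points therefore never contribute to the count, and the whole invariant is decided by the type of the remaining two points $N\times S$ and $S\times N$.

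Second, I would invoke the dichotomy in the same proposition: both $N\times S$ and $S\times N$ are of focus-focus type exactly when $E<0$ and of elliptic-elliptic type exactly when $E>0$, where $E$ is the quantity in \eqref{eq:E}. Since the sign of $E$ governs both points simultaneously, only the two values $\nff=2$ (when $E<0$) and $\nff=0$ (when $E>0$) can occur, with no intermediate configuration having a single focus-focus point. This produces the stated values and corresponds precisely to the coloured and white regions of Figure \ref{fig:4X2nff}.

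Finally, to be sure that $\nff$ is a genuine symplectic invariant and not merely a count, I would appeal to Theorem \ref{th:semit}, which guarantees that the system is semitoric for every $(s_1,s_2)\in[0,1]^2$ with $E\neq 0$. In the case $E<0$ one should also verify the simplicity hypothesis of Definition \ref{def:semitoric}, by observing that the two focus-focus points lie in distinct level sets of $L$: the $L$-value at $N\times S$ is $R_1-R_2$ and at $S\times N$ it is $R_2-R_1$, and these differ since $R_1\neq R_2$. Hence every level set of $L$ contains at most one focus-focus singularity, the system is simple, and the invariant is well-defined.

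I do not expect a genuine obstacle here. All of the analytic content, namely the identification of the rank $0$ points, the computation of the discriminant $D$, and the extraction of the sign-determining factor $E$, is already contained in Proposition \ref{pro:nff} and Theorem \ref{th:semit}. The remaining work is purely to assemble the two cases, so the corollary follows by bookkeeping; the only point worth stating carefully is the simplicity check above, which is what justifies later treating the height invariant as a genuine two-component object.
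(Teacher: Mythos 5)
Your proposal is correct and takes essentially the same route as the paper, which treats the corollary as immediate bookkeeping from Proposition \ref{pro:nff} (both exceptional points $N\times S$ and $S\times N$ change type simultaneously with the sign of $E$, so only $\nff=0$ or $\nff=2$ can occur) together with Theorem \ref{th:semit} for non-degeneracy away from $E=0$. Your explicit simplicity check --- that the two focus-focus points have distinct $L$-values $R_1-R_2$ and $R_2-R_1$ --- is a sound addition that the paper only handles implicitly, via its closing remark that $R_1=R_2$ would yield a non-simple system.
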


Figures \ref{ff:arrayOfPics} and \ref{fig:4X2nff} suggest that system \eqref{sysdef} has several symmetries. This will have an effect on the symplectic invariants.

\begin{pro}
We define the following transformations, acting on the base manifold $(M,\om)$ and the system parameters $(R_1,R_2,s_1,s_2)$:
\begin{small}
\begin{align*}
\Psi_1: & (x_1,y_1,z_1,x_2,y_2,z_2;R_1,R_2,s_1,s_2) \mapsto  (-x_1,-y_1,z_1,-x_2,-y_2,z_2;R_1,R_2,s_1,s_2) \\
\Psi_2: & (x_1,y_1,z_1,x_2,y_2,z_2;R_1,R_2,s_1,s_2) \mapsto  (x_1,-y_1,-z_1,x_2,-y_2,-z_2;R_1,R_2,1-s_1,s_2) \\
\Psi_3: & (x_1,y_1,z_1,x_2,y_2,z_2;R_1,R_2,s_1,s_2) \mapsto  (x_2,y_2,z_2,x_1,y_1,z_1;R_2,R_1,s_1,1-s_2) \\
\Psi_4: & (x_1,y_1,z_1,x_2,y_2,z_2;R_1,R_2,s_1,s_2) \mapsto  (-x_1,-y_1,z_1,x_2,y_2,z_2;R_1,R_2,1-s_1,s_2) \\
\Psi_5: & (x_1,y_1,z_1,x_2,y_2,z_2;R_1,R_2,\tfrac{1}{2},s_2)\; \mapsto  (x_1,y_1,z_1,x_2,y_2,z_2;R_1,R_2,\tfrac{1}{2},1-s_2)
\end{align*}
\end{small}They preserve the symplectic form, $\Psi_i^*\om = \om$, and act on the system \eqref{sysdef} as 
\begin{align*}
& {\Psi_i}^*(L,H) = (L,H)  && \text{ for } i=1,3,5 \\
& {\Psi_i}^*(L,H) = (-L,H) && \text{ for } i=2 \\
& {\Psi_i}^*(L,H) = (L,-H) && \text{ for } i=4. 
\end{align*} 
\label{proptrans4x2} 
\vspace{-1cm} 
\end{pro}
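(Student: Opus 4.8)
The plan is to verify the two assertions—symplectic invariance and the prescribed action on $(L,H)$—separately, treating each $\Psi_i$ by the same elementary recipe. First I would fix the bookkeeping convention: each $\Psi_i$ acts simultaneously on a point of $M$ and on the parameters $(R_1,R_2,s_1,s_2)$, so that ``$\Psi_i^*\om=\om$'' and ``$\Psi_i^*(L,H)=(\pm L,\pm H)$'' are to be read as statements about the \emph{manifold part} of $\Psi_i$ pulling back the form and functions \emph{assembled from the transformed parameters} and returning those assembled from the original ones. To make the substitutions transparent I would pass to cylindrical coordinates $(z_i,\theta_i)$ on each sphere, with $x_i=\sqrt{1-z_i^2}\cos\theta_i$ and $y_i=\sqrt{1-z_i^2}\sin\theta_i$, in which $\om_{\mbS^2}=\pm\,dz\wedge d\theta$ and hence, up to an overall sign, $\om=R_1\,dz_1\wedge d\theta_1+R_2\,dz_2\wedge d\theta_2$.

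For the symplectic invariance I would note that the manifold part of each $\Psi_i$ is one of four elementary maps. The maps $\Psi_1$ and $\Psi_4$ are rotations by $\pi$ about the vertical axis, on both spheres respectively only on the first, i.e.\ $\theta_i\mapsto\theta_i+\pi$ with $z_i$ fixed, so they leave every $dz_i\wedge d\theta_i$ invariant. The map $\Psi_2$, $(x_i,y_i,z_i)\mapsto(x_i,-y_i,-z_i)$, is $z_i\mapsto-z_i$, $\theta_i\mapsto-\theta_i$, whose two sign changes give a Jacobian factor $(-1)(-1)=+1$ on each $dz_i\wedge d\theta_i$. The map $\Psi_5$ is the identity on $M$. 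The only case that genuinely uses the parameter change is $\Psi_3$, the factor swap $\sigma(p_1,p_2)=(p_2,p_1)$: since $\sigma$ exchanges the pullbacks of the two copies of $\om_{\mbS^2}$, combining it with the simultaneous exchange $R_1\leftrightarrow R_2$ returns $\om$ exactly. Each of these checks can also be done directly on the Cartesian area form $x\,dy\wedge dz+y\,dz\wedge dx+z\,dx\wedge dy$.

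For the action on $(L,H)$ the crux is one algebraic observation: the coupling coefficient $s_1+s_2-s_1^2-s_2^2$ is invariant under $s_1\mapsto1-s_1$ and under $s_2\mapsto1-s_2$ separately, because $s-s^2$ is symmetric about $s=\tfrac12$; meanwhile $1-2s_1\mapsto-(1-2s_1)$ under $s_1\mapsto1-s_1$, and under $s_2\mapsto1-s_2$ the two linear-in-$z$ coefficients of $H$ are exchanged. Since $L=R_1z_1+R_2z_2$ depends only on the $z_i$, its sign is immediate: the $z_i$ are fixed by $\Psi_1,\Psi_4,\Psi_5$ (so $L$ is preserved), negated by $\Psi_2$ (so $L\mapsto-L$), and swapped by $\Psi_3$ while $R_1\leftrightarrow R_2$ (so $L$ is again preserved). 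For $H$ I would first substitute the transformed parameters, invoke the invariance of the coupling coefficient, and then apply the manifold map, tracking the bilinear term through $x_1x_2+y_1y_2\mapsto(\pm x_1)(\pm x_2)+(\pm y_1)(\pm y_2)$: it is unchanged under $\Psi_1$ and $\Psi_2$, symmetric under the $\Psi_3$ swap, and sign-flipped under $\Psi_4$. Combining the flip of $1-2s_1$ with the behaviour of the $z_i$ and of the bilinear term then yields $+H$ for $\Psi_1,\Psi_2,\Psi_3,\Psi_5$ and $-H$ for $\Psi_4$, matching the stated table.

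I do not expect a serious obstacle: every verification is a finite set of elementary substitutions. The only thing requiring care is the dual, point-and-parameter, nature of each $\Psi_i$, namely keeping straight which sign of $L$ or $H$ originates from the manifold action and which from the parameter reflection. The most delicate single case is $\Psi_3$, where both the symplectic form and both components of $F$ must be reconciled with the simultaneous exchange $R_1\leftrightarrow R_2$ and $s_2\mapsto1-s_2$; but once the symmetry of $s-s^2$ and the symmetry of $x_1x_2+y_1y_2$ are in hand, even this reduces to relabelling indices.
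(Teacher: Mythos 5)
Your proposal is correct and follows essentially the same route as the paper, whose entire proof is ``Simple substitution in equation \eqref{sysdef}'': a direct verification by substitution, which you simply carry out in more detail (cylindrical coordinates for the form, the invariance of $s-s^2$ under $s\mapsto 1-s$, and the sign of $1-2s_1$ for the momentum map). All of your individual sign checks are accurate, so the proposal is a valid, fleshed-out version of the paper's argument.
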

\begin{proof}
Simple substitution in equation \eqref{sysdef}.\\
\end{proof}

\begin{co}
The transformations $\Psi_i$ with $i=1,3,5$ induce semitoric isomorphisms. The systems related by one of these transformations have the same symplectic invariants. 
\label{co:trans}
\end{co}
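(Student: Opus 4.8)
The plan is to read off the semitoric isomorphisms directly from Proposition \ref{proptrans4x2} and then to invoke the Pelayo--Vũ Ngọc classification (Theorem \ref{th:class}) to upgrade ``isomorphic'' to ``same invariants''. First I would fix notation: write $\varphi_i\colon M\to M$ for the map on the symplectic manifold induced by $\Psi_i$ (forgetting the action on the parameters), denote by $(M,\om,F)$ the source system with parameters $(R_1,R_2,s_1,s_2)$, and by $(M,\om',F')$ the target system whose parameters are the image of $(R_1,R_2,s_1,s_2)$ under $\Psi_i$. By Proposition \ref{proptrans4x2}, $\varphi_i$ is a symplectomorphism from $(M,\om)$ to $(M,\om')$, and for $i=1,3,5$ the relation $\Psi_i^*(L,H)=(L,H)$ says precisely that the momentum maps are intertwined, $F'\circ\varphi_i=F$.

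Next I would produce the base diffeomorphism $\varrho$. Since $F'\circ\varphi_i=F$ and $\varphi_i$ is a bijection of $M$, the images coincide: $B':=F'(M)=F'(\varphi_i(M))=F(M)=:B$. Hence one may take $\varrho=\mathrm{id}_B$. It satisfies $\varrho\circ F=F=F'\circ\varphi_i$, it has the required form $\varrho(l,h)=(l,h)$, and its second component obeys $\partial\varrho^{(2)}/\partial h=1>0$. Thus $(\varphi_i,\varrho)$ is a semitoric isomorphism for each $i\in\{1,3,5\}$, and by part~2 of Theorem \ref{th:class} isomorphic semitoric systems carry identical lists of invariants, which is exactly the claim.

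The argument is essentially bookkeeping against the definition of semitoric isomorphism, so I expect no genuine obstacle; the only points needing care are the sign conventions in reading $\Psi_i^*(L,H)$ and the reason the remaining two transformations are excluded. For $i=2$ one has $\Psi_2^*(L,H)=(-L,H)$, so the induced map on the base reverses the first coordinate and cannot be written in the form $\varrho(l,h)=(l,\varrho^{(2)}(l,h))$; for $i=4$ one has $\Psi_4^*(L,H)=(L,-H)$, which forces $\partial\varrho^{(2)}/\partial h<0$ and violates the positivity constraint. Both therefore fail the structural conditions of a semitoric isomorphism (they are symmetries only in a weaker sense permitting orientation reversals), which is precisely why the corollary singles out $i=1,3,5$. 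I would close by noting that this is the step to present most carefully, since it explains the scope of the statement rather than following automatically from the computation.
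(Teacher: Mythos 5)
Your proposal is correct and follows the same route as the paper, whose entire proof is the one-line ``Consequence of Theorem \ref{th:class}'': you simply make explicit the routine verification (that $\Psi_i^*(L,H)=(L,H)$ for $i=1,3,5$ yields a semitoric isomorphism with $\varrho=\mathrm{id}$) that the paper leaves implicit. Your additional remarks on why $\Psi_2$ and $\Psi_4$ are excluded are also accurate and correspond to the paper's subsequent Remark on those two transformations.
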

\begin{proof}
Consequence of the Theorem \ref{th:class}.
\end{proof}

\begin{re}
The transformation $\Psi_2$ induces an isomorphism that reverses the sign of $L$, which resembles the isomorphism that relates the standard and reverse cases of the coupled angular momenta studied in Alonso $\&$ Dullin $\&$ Hohloch \cite{ADH2}. Similarly, the transformation $\Psi_4$ induces an isomorphism that reverses the sign of $H$, which resembles the discrete symmetry of the coupled spin oscillator studied in Alonso $\&$ Dullin $\&$ Hohloch \cite{ADH}.
\end{re}

\subsection{The polygon invariant}

The polygon invariant of the system \eqref{sysdef} can be obtained from the isotropic weights of the function $L$ by making use of the following theorem, that relates the slopes of the polygon to the derivative of the Duistermaat-Heckman function $\rho_L(l)$, that is, the symplectic volume of the reduced space $L^{-1}(l)/\mbS^{1}$.

\begin{theo}[\vungoc \cite{Vu2}]
\label{duist}
Let $\De = f(M)$ be a representative of the polygon invariant. Denote by $\al^+(l)$ the slope of the top boundary of $\De$ and $\al^-(l)$ the bottom boundary. Then the derivative of $\rho_L$ is given by
$$ \rho_L'(l) = \alpha^+(l) - \al^-(l)$$ and it is locally constant on $L(M) \backslash $\emph{Crit}$(L)$, where \emph{Crit}$(L)$ is the set of critical points of $L$. If $\lam \in\! $ \emph{Crit}$(L)$,
$$ \rho_L'(\lam + 0) - \rho_L'(\lam-0) = -c -e^+ - e^-,$$ where $c=1$ if there is a focus-focus point on $L^{-1}(\lam)$ and $c=0$ otherwise. The number $e^+$ ($e^-$ resp.) vanishes if there is no elliptic-elliptic point on the top (bottom resp.) border of $L^{-1}(\lam)$ and otherwise is given by
$$ e^\pm := -\dfrac{1}{a^\pm b^\pm} \geq 0, $$ where $a,b$ are the corresponding isotropic weights.
\end{theo}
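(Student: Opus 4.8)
Since this is a theorem of \vungoc \cite{Vu2}, the plan is to reconstruct its proof from the action-coordinate machinery of Section \ref{ss:sinv}. The guiding idea is that $\rho_L(l)$ is, by definition, the symplectic volume of the reduced space $L^{-1}(l)/\mbS^1$, and that with the normalisation \eqref{actions} this volume is exactly the vertical width of $\De$ at $l$; the formula for $\rho_L'$ then comes from differentiating that width, while the jump conditions come from the behaviour of the reduced volume across the singular $L$-levels. First I would fix a regular value $l \in L(M) \setminus \mathrm{Crit}(L)$ and invoke the action-angle theorem \cite{Ar}: over a neighbourhood we have coordinates $(I_1,I_2,\phi_1,\phi_2)$ with $I_1=L$ and $\om = \dee I_1 \wedge \dee\phi_1 + \dee I_2 \wedge \dee\phi_2$, the $I_j$ normalised as in \eqref{actions}. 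Reducing at $L=l$ removes the conjugate pair $(I_1,\phi_1)$ and leaves the reduced form $\dee I_2 \wedge \dee\phi_2$, so $\rho_L(l)$ equals the length of the $I_2$-interval swept out over the fibre, i.e.\ $g^+(l)-g^-(l)$, where $h=g^\pm(l)$ are the top and bottom boundary functions of $\De$ with slopes $\al^\pm$. Since $\De$ is a polygon these functions are piecewise affine, so $\rho_L'(l)=\al^+(l)-\al^-(l)$ and the $\al^\pm$, hence $\rho_L'$, are locally constant; this recovers the piecewise linearity of $\rho_L$ predicted by the Duistermaat--Heckman theorem in the two-dimensional reduced setting.

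The real content is the jump of $\rho_L'$ across a critical value $\lam$, which I would obtain by localising at the rank-$0$ singularities lying over $L^{-1}(\lam)$ and computing how $\rho_L$ fails to be $C^1$. There are two independent sources. An elliptic-elliptic point is a vertex of $\De$: in its toric Eliasson normal form \cite{El1,El2} the $\mbS^1$-action generated by $L$ has isotropic weights $a^\pm,b^\pm$ at the upper/lower vertex, and evaluating the local reduced area of the model near such a corner shows that crossing $\lam$ changes the one-sided slope of the adjacent boundary by $e^\pm=-1/(a^\pm b^\pm)$, with $e^\pm\ge 0$ reflecting the local convexity of $\De$ at the vertex. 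A focus-focus point on $L^{-1}(\lam)$ contributes the extra universal constant $c=1$: this is the affine monodromy around the focus-focus value (Duistermaat \cite{Du1}, \vungoc \cite{Vu1}), which forces the second action $I_2$ to acquire one unit of $l$ as one circles $c_r$, so that the one-sided derivative of the width drops by exactly $1$. Summing the contributions of the finitely many singular points over the level (finiteness is guaranteed by properness of $L$) then yields $\rho_L'(\lam+0)-\rho_L'(\lam-0)=-c-e^+-e^-$.

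The hard part is precisely this local analysis at $\lam$, in two respects. The focus-focus contribution is genuinely non-toric and cannot be read off a polytope: one must use the logarithmic local model $2\pi I_2(w)=\mathrm{const}-\mathrm{Im}(w\log w - w)+S_r(w)$ with $w=l+\mathrm{i}j$ recalled in Section \ref{ss:sinv} and verify that the jump in $\partial_l$ of the width is exactly $-1$, independently of the smooth correction $S_r$. For the elliptic corners, matching $-1/(a^\pm b^\pm)$ to the geometry of $\De$ requires keeping track of the integral-affine (Delzant) normalisation, since the weights $a^\pm,b^\pm$ are only defined up to the $\mathrm{SL}(2,\Z)$ ambiguity of the chart; one must check that the quantity $1/(a^\pm b^\pm)$ is invariant under this ambiguity so that $e^\pm$ is well defined. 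Once both one-sided jumps are pinned down, the global statement follows by additivity over the singular fibre.
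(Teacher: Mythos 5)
The first thing to say is that the paper contains \emph{no proof} of Theorem~\ref{duist}: it is \vungoc 's theorem, imported verbatim from \cite{Vu2} and used purely as a black box in the derivation of the polygon invariant (Theorem~\ref{th:poly}). So there is nothing internal to compare your attempt against; it can only be measured against the argument of the cited source. On that score, your outline is essentially the correct one, and it is in fact the strategy behind \cite{Vu2}: the normalisation \eqref{actions} identifies $\rho_L(l)$, up to universal normalisation constants, with the vertical width $g^+(l)-g^-(l)$ of $\De$, which gives $\rho_L'=\al^+-\al^-$ and its local constancy (using that $\De$ is a polygon is legitimate here, since polygonality is part of the hypothesis ``$\De=f(M)$ is a representative of the polygon invariant''); the jump at $\lam$ is then the sum of local contributions of the $\mbS^1$-fixed points on $L^{-1}(\lam)$. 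Your insistence that the focus-focus jump be checked to be independent of the smooth term $S_r$ is exactly the right crux: $\partial_l$ of $-\mathrm{Im}(w\log w - w)$ equals $-\arg w$, whose branch discontinuity produces the unit jump, while $S_r$, being smooth, contributes nothing.

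Three corrections would be needed before this could stand as a proof. First, a sign slip: since $\De$ is convex, the slope of the \emph{top} boundary can only decrease at a corner, so the elliptic-elliptic point on the top border changes $\al^+$ by $-e^+=1/(a^+b^+)\leq 0$, while the bottom corner changes $\al^-$ by $+e^-\geq 0$; your phrase ``changes the one-sided slope of the adjacent boundary by $e^\pm$'' is wrong for the top boundary, although the combination $\rho_L'(\lam+0)-\rho_L'(\lam-0)=-e^+-e^-$ comes out right because $\rho_L'=\al^+-\al^-$. Second, your claim that the focus-focus contribution ``is genuinely non-toric and cannot be read off a polytope'' is overstated: a focus-focus point is an isolated fixed point of the $\mbS^1$-action with isotropy weights $(1,-1)$, so the very same classical Duistermaat--Heckman jump formula you invoke at the elliptic corners (contribution $1/(ab)$ per isolated fixed point, in the theorem's normalisation) gives its contribution as $1/(1\cdot(-1))=-1=-c$ with no separate argument; the route through the logarithmic model of \cite{Vu1} and the monodromy of \cite{Du1} is valid, and arguably closer to the affine-geometric treatment in \cite{Vu2}, but it is neither the only nor the shortest one, and presenting the two cases as requiring different machinery obscures their common origin. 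Third, your worry about the $\mathrm{SL}(2,\Z)$ ambiguity of the weights dissolves on inspection: $a^\pm, b^\pm$ are weights of the isotropy representation of the $\mbS^1$-action generated by $L$, not of a torus chart, so they are intrinsically defined up to ordering, and the product $a^\pm b^\pm$ needs no invariance check. With these repairs, and with the local model computations actually carried out rather than asserted, your sketch would indeed reconstruct the cited result.
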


Since the phase space $(M,\om)$ and the function $L$ coincide with that of the coupled angular momenta, we can use the isotropic weights computed by Le Floch $\&$ Pelayo \cite{LFPe}. We obtain the following result:

\begin{theo}
\label{th:poly}
The polygon invariant of the system \eqref{sysdef} is determined by the following cases: 
\begin{itemize}
	\item If $\nff=2$, the polygon invariant is the $((\Z_2)^2\times \Z)$-orbit generated by any of the polygons represented in Figure \ref{ff:2FFpolygons}.
	\item If $\nff=0$ and $(s_1,s_2)$ lies in the same connected component as the point $(0,0)$ or $(1,1)$, then the polygon invariant is the $\Z$-orbit generated by the polygon in Figure \ref{ff:2FFpolygons10}.
	\item If $\nff=0$ and $(s_1,s_2)$ lies in the same connected component as the point $(1,0)$ or $(0,1)$, then the polygon invariant is the $\Z$-orbit generated by the polygon in Figure \ref{ff:2FFpolygons01}.
\end{itemize}
\end{theo}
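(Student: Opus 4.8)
The approach is to reconstruct the polygon from the Duistermaat--Heckman derivative $\rho_L'$ by means of Theorem \ref{duist}, following the strategy indicated in the paragraph preceding the statement. Because the symplectic manifold $(M,\om)$ and the circle-generating function $L=R_1 z_1+R_2 z_2$ coincide with those of the coupled angular momenta, I can import verbatim the isotropic weights $a^\pm,b^\pm$ at the rank~$0$ fixed points computed by Le Floch $\&$ Pelayo \cite{LFPe} and feed them into the quantities $e^\pm=-1/(a^\pm b^\pm)$ entering the jump formula. The type (elliptic-elliptic versus focus-focus) of each rank~$0$ point is already supplied by Proposition \ref{pro:nff}, so no new singularity analysis is required.

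First I would order the critical values of $L$. Since $R_2>R_1$, the four rank~$0$ fixed points $S\times S$, $N\times S$, $S\times N$, $N\times N$ have critical values $-(R_1+R_2)<R_1-R_2<R_2-R_1<R_1+R_2$ respectively. The extreme ones, $S\times S$ and $N\times N$, are always elliptic-elliptic and produce the bottom and top vertices of $\De$; the two intermediate ones are focus-focus exactly when $E<0$. On each open interval between consecutive critical values $\rho_L'$ is constant, so the boundary of $\De$ is piecewise linear, and its two slopes $\al^\pm$ on each interval are pinned down by $\rho_L'=\al^+-\al^-$ together with the elliptic normalisation at the extreme vertices.

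Next I would evaluate the jump $\rho_L'(\lam+0)-\rho_L'(\lam-0)=-c-e^+-e^-$ at each of the four critical values. When $\nff=2$ the two interior points contribute $c=1$ with no elliptic term, hence one new corner apiece; the two independent upward/downward cut directions at these corners furnish the $(\Z_2)^2$ part of the symmetry group while the vertical shear furnishes the remaining $\Z$, so integrating the slopes reproduces the polygon of Figure \ref{ff:2FFpolygons}. When $\nff=0$ every rank~$0$ point is elliptic-elliptic, so $c=0$ at all four values and the jumps are dictated solely by the weight products $e^\pm$; substituting the imported weights yields the slopes, and with no focus-focus points only the $\Z$-action remains, producing a quadrilateral.

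Finally, for the $\nff=0$ regime I would use that the polygon invariant is discrete, hence constant on each connected component of $\{E>0\}$. The sign analysis of $E$ shows that $\{E<0\}$ meets all four edges of the square $[0,1]^2$, cutting the complement into four corner components containing $(0,0),(1,0),(0,1),(1,1)$ respectively. It thus suffices to compute the invariant at these four toric representatives, where $H$ reduces to $\pm z_1$ or $\pm z_2$ and the moment polytope can be written down directly; comparing the resulting parallelograms up to the vertical $\Z$-shear shows that $(0,0)$ and $(1,1)$ share the polygon of Figure \ref{ff:2FFpolygons10} while $(1,0)$ and $(0,1)$ share that of Figure \ref{ff:2FFpolygons01}, in agreement with the point symmetry $(s_1,s_2)\mapsto(1-s_1,1-s_2)$ of the system encoded in Proposition \ref{proptrans4x2}. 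The main obstacle is the bookkeeping of the isotropic weights and checking that the accumulated slope data integrate to the claimed polygons up to the correct $(\Z_2)^{\nff}\times\Z$-equivalence rather than merely to an affine-equivalent shape; the discreteness of the invariant is precisely what reduces the whole $\nff=0$ analysis to the four explicit toric corners.
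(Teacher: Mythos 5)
Your proposal is correct and runs on the same engine as the paper's proof, namely Theorem \ref{duist} fed with the isotropic weights imported from Le Floch \& Pelayo \cite{LFPe}, but the two cases are executed differently. For $\nff=2$ the paper does not redo the jump bookkeeping: it observes that the polygons must agree with the two known polygons of the coupled angular momenta (whose $(M,\om,L)$ is identical to the present one), and that the second focus-focus value merely adds a second independent cut choice, producing the four representatives of Figure \ref{ff:2FFpolygons}; your direct evaluation of $\rho_L'(\lam+0)-\rho_L'(\lam-0)$ at the four critical values $\lam=\pm(R_1+R_2),\pm(R_2-R_1)$ re-derives the same polygons in a more self-contained way. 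For $\nff=0$ the routes genuinely diverge: the paper identifies the polygon by visually matching the momentum-map images of Figure \ref{ff:arrayOfPics} against Figure \ref{ff:2FFpolygons}, whereas you deform inside a connected component of $\{E>0\}$ to one of the toric corners $(s_1,s_2)\in\{0,1\}^2$, where $H=\pm z_i$ and the polygon can be written down exactly; this is tidier and figure-independent, and the top/bottom position of the two intermediate elliptic-elliptic vertices (which the vertical $\Z$-shear can never exchange) cleanly separates the $(0,0),(1,1)$ polygon from the $(1,0),(0,1)$ one.

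Two steps in your version should be made precise, although the paper is no more careful on either. First, ``discrete hence locally constant'' is not by itself an argument; the honest justification is that on a connected component of $\{E>0\}$ the side (top or bottom boundary of $B$) on which each elliptic-elliptic vertex sits cannot change, since switching sides would force that vertex through the interior of $B$, i.e.\ through a focus-focus stage, which is exactly what $E>0$ excludes. Second, your claim that $\{E<0\}$ meets all four edges of $[0,1]^2$ and carves the complement into exactly four corner components is read off from the plot for $R_2/R_1=2$; the shape of $\{E<0\}$ genuinely changes with $R$ (for instance, along the edge $s_1=0$ the sign pattern of $E$ in $s_2$ depends on $R$), so for general radii you should either carry out this sign analysis for all $R>1$ or, more economically, invert the logic: local constancy plus the inequivalence of the corner polygons already forbids corners with different polygons from lying in a common component. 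Finally, a small slip: $S\times S$ and $N\times N$ produce the left and right vertices of $\De$ (the extreme values of $L$), not the bottom and top ones.
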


\begin{proof}
We distinguish between two situations:
\begin{itemize}
	\item \textit{\underline{Case $\nff=2$:}} The polygon invariant consists of the quotient of a $((\Z_2)^2\times \Z)$-orbit of a polygon by the $((\Z_2)^2\times \Z)$-action, where the action of $(\Z_2)^2$ comes from the choice of signs  $\epsilon=(\epsilon_1,\epsilon_2)$, one per focus-focus singularity.
	
In this case, the polygons coincide, in principle, with those of the coupled angular momenta, namely Figures \ref{ff:2FFpolygons10} and \ref{ff:2FFpolygons11} but, since we have an additional focus-focus singularity, we also have an additional sign choice, corresponding to the cutting direction on the second singularity. This means that we have to add two more polygons, that is, the ones that coincide with \ref{ff:2FFpolygons10} and \ref{ff:2FFpolygons11} on the left of the image of the second focus-focus singularity and change the slope by one on its right, cf.\ Figures \ref{ff:2FFpolygons00} and \ref{ff:2FFpolygons01}. 
	
	\item \textit{\underline{Case $\nff=0$:}} In this case we do not have any sign choice, so the polygon invariant is the quotient of a $\Z$-orbit by the $\Z$-action. If $\nff=0$ and $(s_1,s_2)$ lies in the same connected component as the point $(0,0)$ or $(1,1)$, by looking at Figure \ref{ff:arrayOfPics} and comparing it with Figure \ref{ff:2FFpolygons}, we identify that the right polygon is Figure \ref{ff:2FFpolygons10}.
	
	If $\nff=0$ and $(s_1,s_2)$ lies in the same connected component as the point $(1,0)$ or $(0,1)$, then the comparison gives the polygon in Figure \ref{ff:2FFpolygons01}.
	\end{itemize}
\end{proof}
\begin{figure}[ht]
\centering
    \begin{subfigure}[b]{0.4\textwidth}
        \includegraphics[width=.9\textwidth]{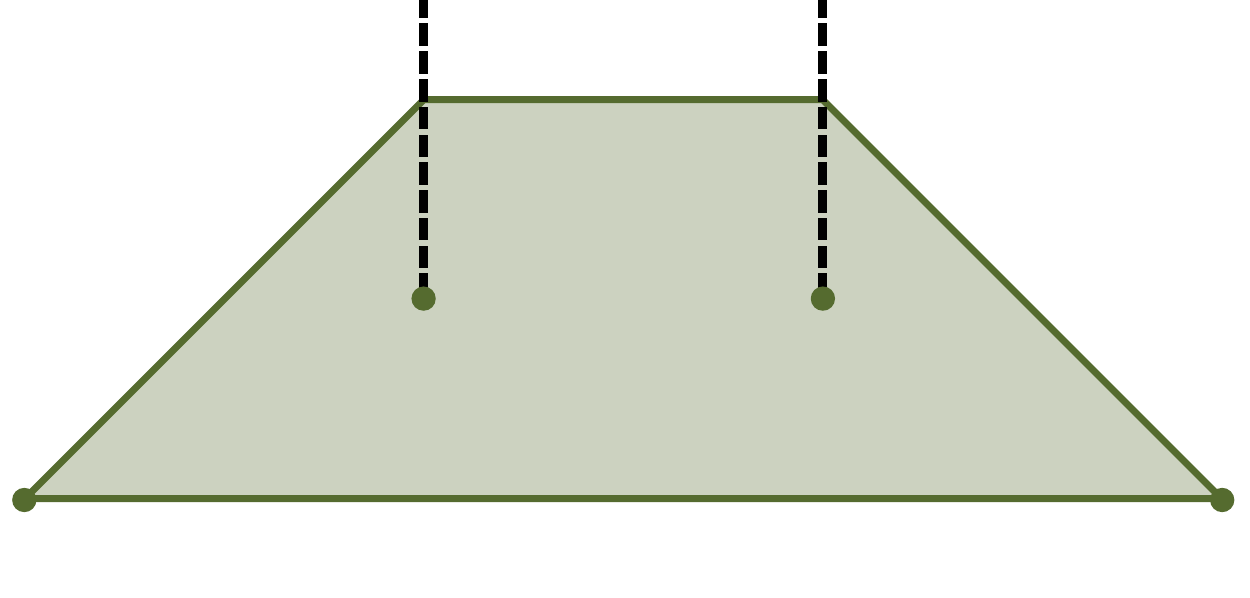}
        \caption{\small \textit{$(\epsilon_1,\epsilon_2)=(+1,+1)$}}
        \label{ff:2FFpolygons00}
    \end{subfigure}
    \hspace{0.1\textwidth}
    \begin{subfigure}[b]{0.4\textwidth}
        \includegraphics[width=.9\textwidth]{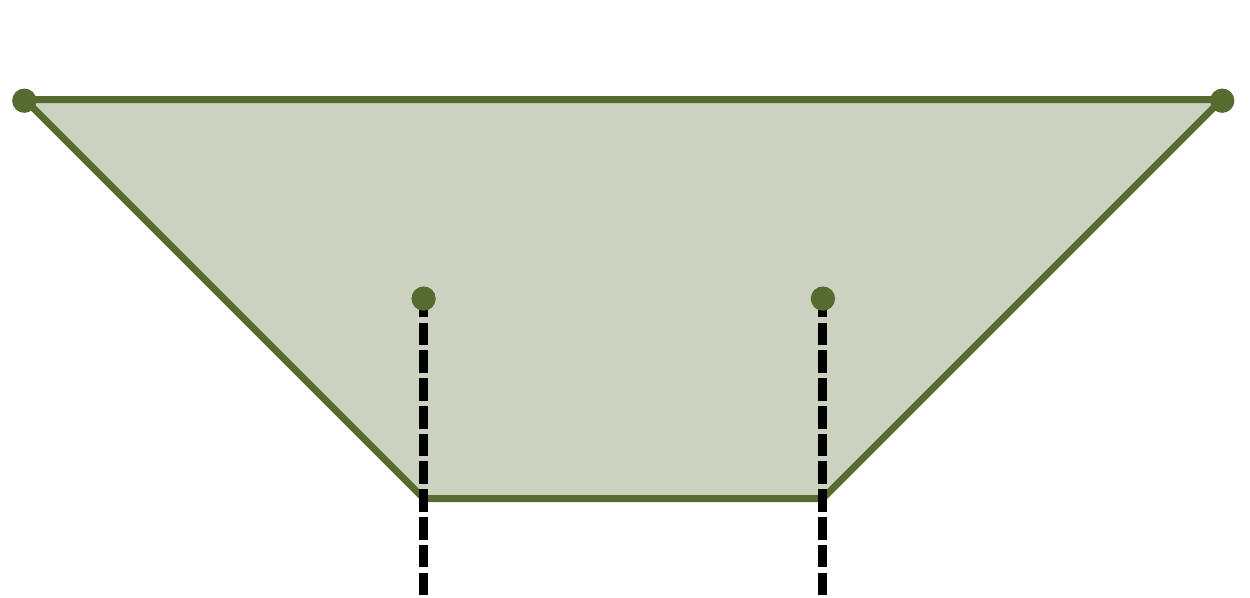}
        \caption{\small \textit{$(\epsilon_1,\epsilon_2)=(+1,-1)$}}
        \label{ff:2FFpolygons10}
    \end{subfigure}    
    \newline
    \begin{subfigure}[b]{0.4\textwidth}
        \includegraphics[width=.9\textwidth]{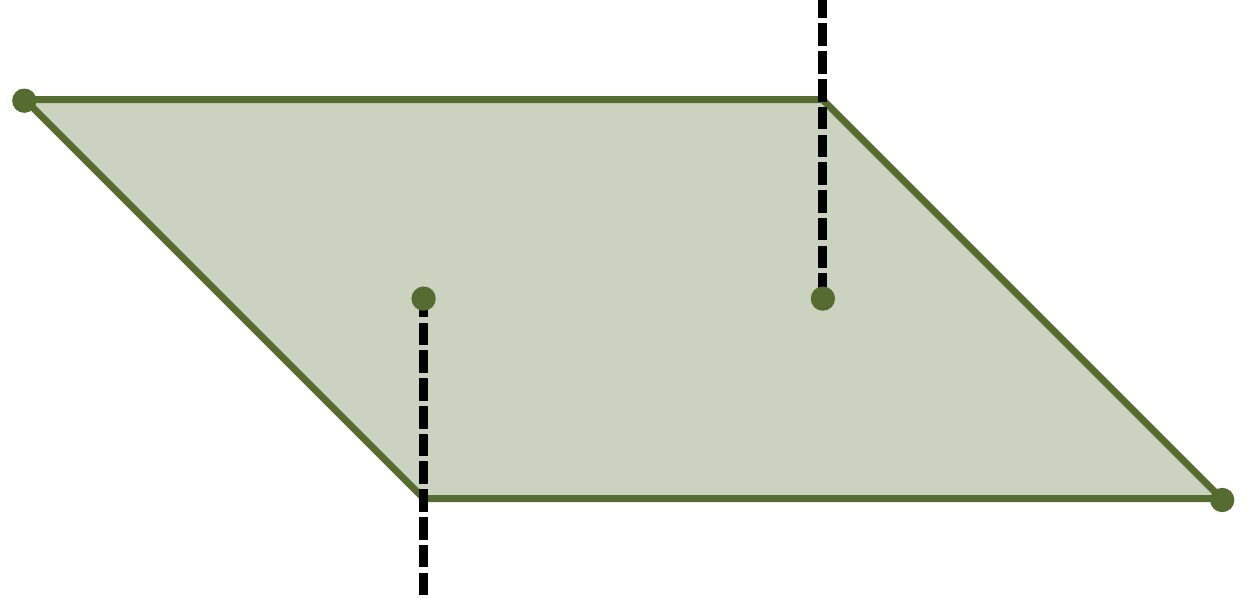}
        \caption{\small \textit{$(\epsilon_1,\epsilon_2)=(-1,+1)$}}
        \label{ff:2FFpolygons01}
    \end{subfigure}
    \hspace{0.1\textwidth}    
    \begin{subfigure}[b]{0.4\textwidth}
        \includegraphics[width=.9\textwidth]{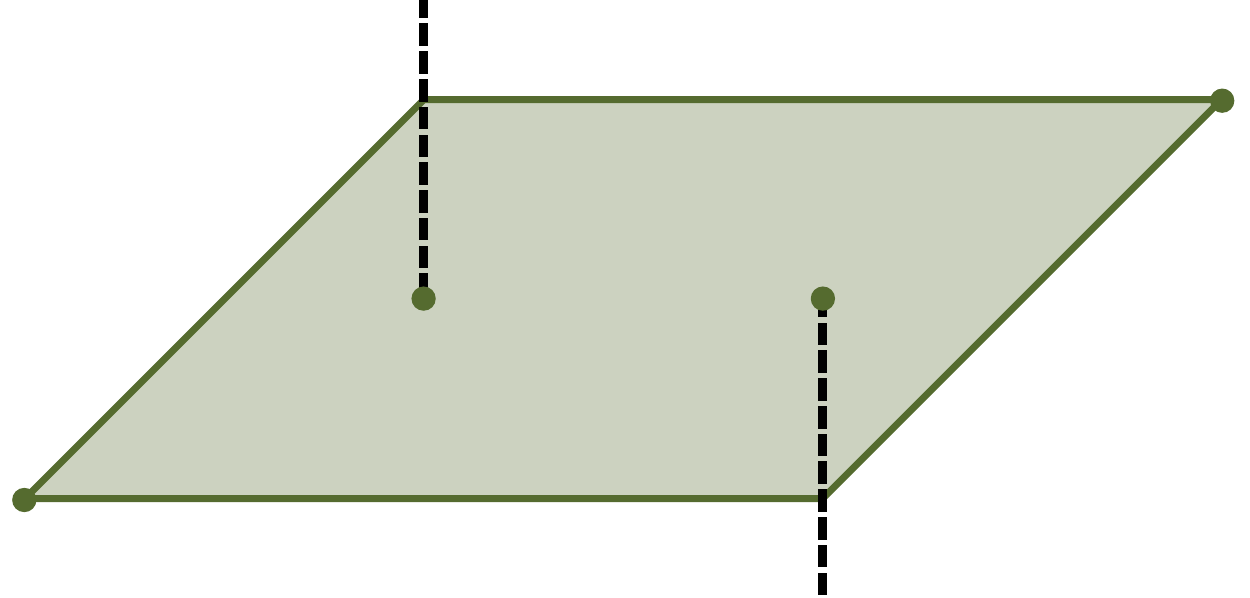}
        \caption{\small \textit{$(\epsilon_1,\epsilon_2)=(-1,-1)$}}
        \label{ff:2FFpolygons11}
    \end{subfigure}
\caption{\textit{\small{Some representatives of the polygon invariant of the system \eqref{sysdef} for different choices of signs $\epsilon=(\epsilon_1,\epsilon_2)$ with the corresponding cutting directions indicated. The horizontal values of the singularities are $L = \pm R_1 \pm R_2$.}}}
\label{ff:2FFpolygons}
\end{figure}

\section{The height invariant}
We compute now the height invariant of the system \eqref{sysdef} for the values of $s_1,s_2$ for which there are two focus-focus singularities. We start by rewriting the system in a more convenient form. Take cylindrical coordinates $(\theta_1,z_1,\theta_2,z_2)$ on $M$, so that the system \eqref{sysdef} becomes
\begin{equation}
\begin{cases}
L(\theta_1,z_1,\theta_2,z_2)\;:=R_1 z_1  + R_2 z_2, \\
H(\theta_1,z_1,\theta_2,z_2):=(1-2s_1)(1-s_2) z_1 + (1-2s_1)s_2 z_2 
\\ \hspace{3.4cm}+ 2(s_1+s_2-{s_1}^2 - {s_2}^2) \sqrt{(1-{z_1}^2)(1-{z_2}^2)}\cos (\theta_1-\theta_2),\\
\end{cases}
\label{eq:cyl}
\end{equation} with the symplectic form $\om = -(R_1\, \dee \theta_1 \wedge \dee z_1  + R_2\, \dee \theta_2 \wedge \dee z_2).$ We now perform the following affine transformation,
\begin{equation*}
\begin{array}{lll}
\qti_1 := -\theta_1 && \qti_2 := \theta_1-\theta_2 \\[0.4cm]
\pti_1 := R_1 z_1 + R_2 z_2 && \pti_2 := R_2 (z_2+1),
\end{array}
\label{change}
\end{equation*} which leads to $L(\qti_1,\pti_1,\qti_2,\pti_2) = \pti_1$ and 
%\todo{\ssc{...again $\times$...}}
\begin{align*}
H(\qti_1,\pti_1,\qti_2,\pti_2)&= \dfrac{1-2s_1}{R_1 R_2} \left( R_2 (\pti_1-\pti_2+R_2)(1-s_2) + s_2 R_1(\pti_2-R_2) \right) \\ & \quad +\dfrac{2(s_1+s_2-{s_1}^2 - {s_2}^2)}{R_1 R_2}  \\ & \qquad \times \sqrt{\pti_2 (\pti_2-2R_2)(\pti_2-\pti_1-R_1-R_2)(\pti_2-\pti_1+R_1-R_2)} \cos(\qti_2),
\end{align*} with symplectic form $\om = \dee \qti_1 \wedge \dee \pti_1 + \dee \qti_2 \wedge \dee \pti_2$. In these coordinates, we see that the function $H$ is independent of $\qti_1$. We simplify now our notation by scaling some of the variables and functions by a factor of $R_1$. We use caligraphic letters to refer to scaled functions and standard letters for unscaled functions:
\begin{equation}
\begin{array}{lllll}
q_1 := \qti_1 && q_2 := \qti_2 && \mcL := \frac{1}{R_1}L\\[0.4cm]
p_1 :=\frac{1}{R_1} \pti_1 && p_2 :=\frac{1}{R_1} \pti_2 && \mcH := H.
\end{array}
\end{equation} This way we can express our problem entirely in terms of $R:= \frac{R_2}{R_1}$. We will do now singular symplectic reduction by the $\mbS^1$-action generated by $L$. We will use two different sets of notations, one adapted to the singularity $N \times S$ and the other to the singularity $S \times N$. 

\subsection{Reduced system for the singularity $N \times S$}
\label{reducedNS}
We start by doing symplectic reduction on the level $\mcL=l+(1-R)$, which is singular in $l=0$, since the focus-focus point $N \times S$ lies on the level set $l=0$. Expressed in coordinates $(q_2,p_2)$, we obtain the reduced Hamiltonian
\begin{equation*}
\begin{aligned}
\mcH^{NS}_l(q_2,p_2) :=& \dfrac{1}{R} (1-2s_1)(R(1+l-2s_2-l s_2)+p_2(s_2-R+R s_2)) \\ 
&\:+ \dfrac{2(s_1+s_2-{s_1}^2 - {s_2}^2)}{R} \sqrt{p_2(p_2-l)(p_2-2R)(p_2-l-2)}\cos( q_2).
\end{aligned}
\end{equation*} 
From this equation we see that the physical region, i.e.\ the domain of definition of $l$ and $p_2$ is given by $l \in [-2,2R]$ and $p_2$ satisfying the inequalities $p_2\geq 0$, $p_2 \geq l$, $p_2 \leq 2R$ and $p_2 \leq l + 2$. The region is depicted in Figure \ref{fig:4X2physNS}. For simplicity, we write 
$$\mcH^{NS}_l(q_2,p_2)  = \mcA^{NS}_l(p_2) + \sqrt{\mcB^{NS}_l(p_2)} \cos(q_2),$$ where
\begin{align*}
\mcA^{NS}_l(p_2) &:= \dfrac{1}{R} (1-2s_1)(R(1+l-2s_2-l s_2)+p_2(s_2-R+R s_2)) \\
\mcB^{NS}_l(p_2) &:= \dfrac{4(s_1+s_2-{s_1}^2 - {s_2}^2)^2}{R^2} p_2(p_2-l)(p_2-2R)(p_2-l-2).
\end{align*}

\begin{figure}[ht]
\centering
\includegraphics[width=7.8cm]{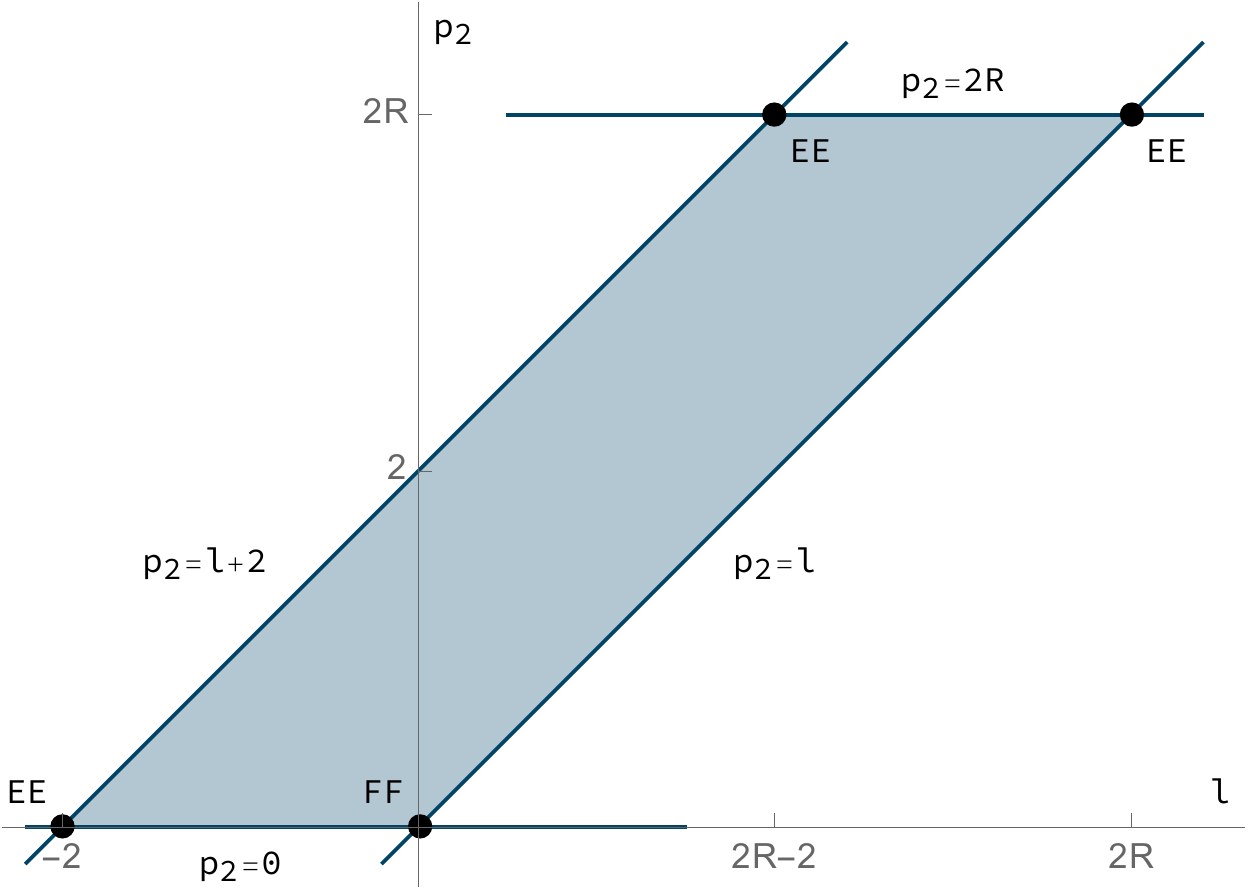}
\caption{\small{\textit{Physical region in the plane $(l,p_2)$ for the reduced model for $N \times S$, corresponding to the values for which $\mcB_l^{NS}(p_2)\geq 0$.}}}
\label{fig:4X2physNS}
\end{figure}

We now define the polynomial
\begin{align*}
	\mcP_l^{NS}(p_2) :=& \mcB_l^{NS}(p_2) -\left(h + (1-2s_1)(1-2s_2)-\mcA_l^{NS} (p_2)\right)^2  \\
	=& \dfrac{1}{R^2} \left(4 (-1 + (1 + l - {p_2})^2) {p_2} ({p_2} - 
      2 R) ((-1 + {s_1}) {s_1} + (-1 + {s_2}) {s_2})^2 \right. \\& \left.- (h R - (-1 + 
        2 {s_1}) (l R (-1 + {s_2}) + {p_2} (R - (1 + R) {s_2})))^2\right)
\end{align*} which is defined in such a way that the singularity $N \times S$ lies precisely on $(l,h)=(0,0)$. The polynomial is of degree 4 in $p_2$.

\subsection{Reduced system for the singularity $S \times N$}
\label{reducedSN}

We focus now on the singularity $S \times N$, so we do symplectic reduction on the level $\mcL=l+(1-R)$, which is singular in $l =0$ since the focus-focus point $S \times N$ lies on the level set $l=0$. We also change the coordinates $(q_2,p_2)$ to
$$ q_2 \mapsto -q_2,\qquad p_2 \mapsto 2R-p_2$$
in order to create a similar notation to the one of \S \ref{reducedNS} 
while preserving the symplectic form. In these new coordinates, the reduced Hamiltonian becomes
\begin{equation*}
\begin{aligned}
\mcH^{SN}_l(q_2,p_2) :=& \dfrac{1}{R} (1-2s_1)(R(-1+l+2s_2-l s_2)+p_2(s_2-R+R s_2)) \\ 
&\:+ \dfrac{2(s_1+s_2-{s_1}^2 - {s_2}^2)}{R} \sqrt{p_2(p_2+l)(p_2-2R)(p_2+l-2)}\cos( q_2).
\end{aligned}
\end{equation*} 

\begin{figure}[ht]
\centering
\includegraphics[width=7.8cm]{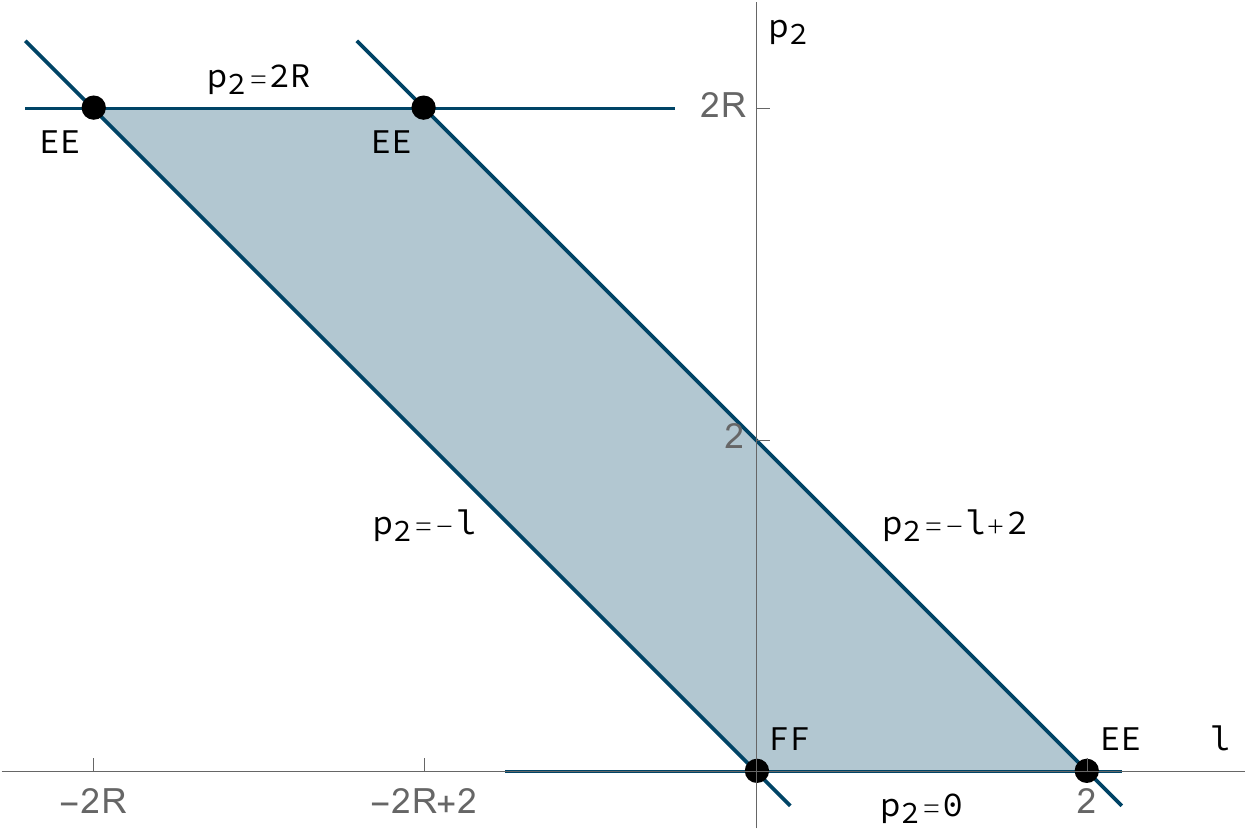}
\caption{\small{\textit{Physical region in the plane $(l,p_2)$ for the reduced model for $S \times N$, corresponding to the values for which $\mcB_l^{SN}(p_2) \geq 0$. Compared to the one corresponding to the reduced model of $N \times S$ in Figure \ref{fig:4X2physNS}, it is reflected on the vertical axis through the focus-focus value.}}}
\label{fig:4X2physSN}
\end{figure}

The physical region in this case will be given by $l \in [-2R,2]$ and $p_2$ satisfying the inequalities $p_2\geq 0$, $p_2 \geq -l$, $p_2 \leq 2R$ and $p_2 \leq -l + 2$. The region is depicted in Figure \ref{fig:4X2physSN}. We write 
$$\mcH^{SN}_l(q_2,p_2)  = \mcA^{SN}_l(p_2) + \sqrt{\mcB^{SN}_l(p_2)} \cos(q_2),$$ where
\begin{align*}
\mcA^{SN}_l(p_2) &:= \dfrac{1}{R} (1-2s_1)(R(-1+l+2s_2-l s_2)+p_2(s_2-R+R s_2)) \\
\mcB^{SN}_l(p_2) &:= \dfrac{4(s_1+s_2-{s_1}^2 - {s_2}^2)^2}{R^2} p_2(p_2+l)(p_2-2R)(p_2+l-2).
\end{align*}

We now define the polynomial
\begin{align*}
	\mcP_l^{SN}(p_2) &:= \mcB_l^{SN}(p_2) -\left(h - (1-2s_1)(1-2s_2)-\mcA_l^{SN} (p_2)\right)^2  \\
	&\,= \dfrac{1}{R^2} \left(4 {p_2} (-2 + l + {p_2}) (l + {p_2}) ({p_2} - 
    2 R) ((-1 + {s_1}) {s_1} + (-1 + {s_2}) {s_2})^2 \right.
    \\ & \;\;\;\;\, \left. - (h R + (-1 + 
      2 {s_1}) ((l + {p_2}) R - ({p_2} + (l + {p_2}) R) {s_2}))^2\right),
\end{align*} in such a way that the singularity $S \times N$ lies precisely on $(l,h)=(0,0)$. The polynomial is also of degree 4 in $p_2$.

\subsection{Computation of the height invariant}
\label{ss:comphei}

Now that we have the two reduced models, the next step is to compute the height invariant $h=(h_1,h_2)$ of the system \eqref{sysdef}. We recall from \S \ref{ss:sinv} that the height $h_r$ associated to the focus-focus singularity $m_r \in M$ is the symplectic volume of 
$$Y_{r}^- := \{ p \in M\; |\; L(p)=L(m_{r}) \text{ and } H(p)< H(m_{r})\},\quad r=1,2.$$

As suggested by Proposition \ref{proptrans4x2}, we are dealing with a very symmetric situation. In particular, the transformation $s_1 \mapsto 1-s_1$ brings the situation of the singularity $N \times S$ to the situation of the singularity $ S \times N$ and vice versa. In Figure \ref{4X2actionsNS}, we can see a plot of this volume for the singularity $N\times S$, so $r=1$ and, in Figure \ref{4X2actionsSN}, we can see the same for the case $r=2$.

\begin{theo}
\label{th:height}
The height invariant $h :=(h_1,h_2)$ associated to the system \eqref{sysdef} for the values of $(s_1,s_2)$ in which it has two focus-focus singularities is given by
\begin{align*}
h_1 &= -\dfrac{1}{2\pi} \mcF(s_1,s_2,R) + 2 u\left( (s_1-\tfrac{1}{2} )(s_2 - \tfrac{R}{R+1}) \right), \\[0.2cm]
h_2&= \hspace{0.3cm} \dfrac{1}{2\pi} \mcF(s_1,s_2,R)  +2 u\left( -(s_1-\tfrac{1}{2} )(s_2 - \tfrac{R}{R+1}) \right) = 2-h_1
\end{align*}
where $R:=\tfrac{R_2}{R_1}$, $u$ is the Heaviside step function and
\begin{align*}
\mcF&(s_1,s_2,R) := 2 R \arctan \left(\frac{\symbga_C}{\sqrt{\symbga_A} (2 {s_1}-1) (R ({s_2}-1)+{s_2})}\right) \\
&+ 2 \arctan\left(\frac{\symbga_D}{\sqrt{\symbga_A} (2 {s_1}-1) (R ({s_2}-1)+{s_2})}\right) \\
       &+ \frac{(2 {s_1}-1) (R ({s_2}-1)+{s_2}) }{2 \left({s_1}^2-{s_1}+{s_2}^2-{s_2}\right)}\log \left(\frac{-\sqrt{\symbga_B}}{2 (R+1) \left({s_1}^2-{s_1}+{s_2}^2-{s_2}\right)+\sqrt{\symbga_A}}\right).
\end{align*} The invariant is represented in Figure \ref{4X2height}. 

\begin{figure}[ht]
 \centering
  	\begin{subfigure}[b]{6.3cm}
        \includegraphics[width=\textwidth]{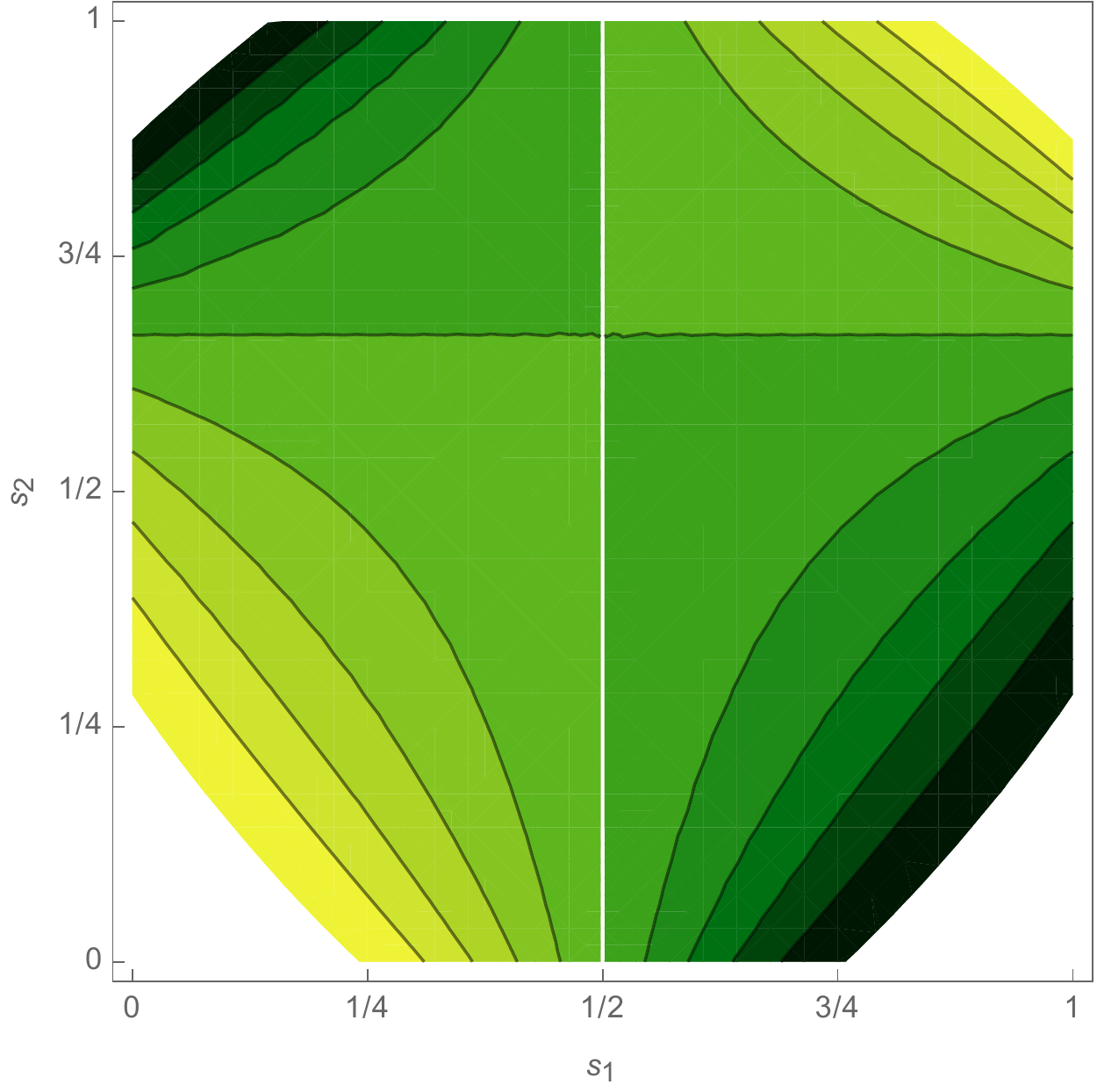}
        \caption{\small \textit{Singularity $N \times S$}}
    \end{subfigure}
    \hspace{1.5cm}
        \begin{subfigure}[b]{6.3cm}
        \includegraphics[width=\textwidth]{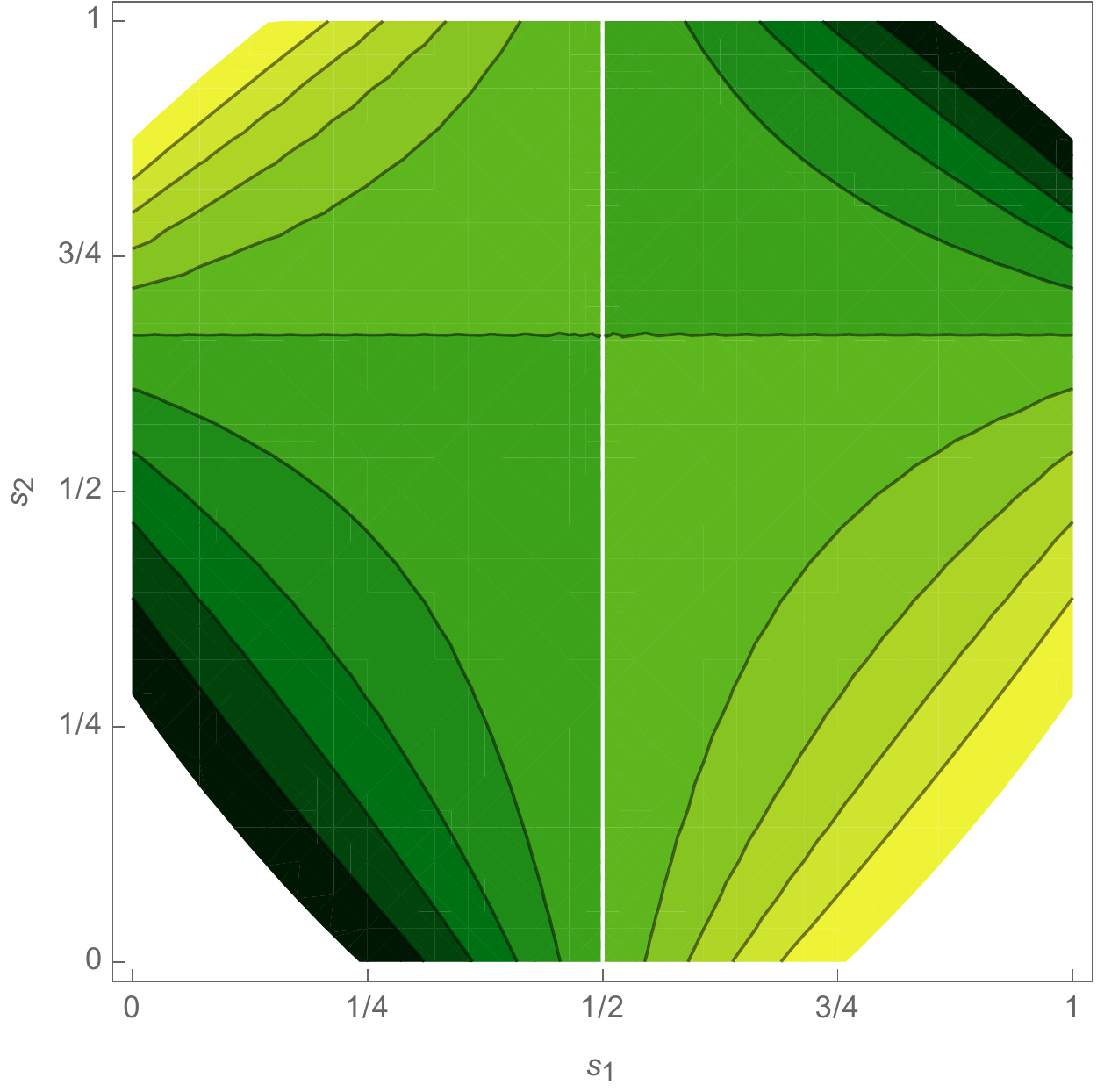}
        \caption{\small \textit{Singularity $S \times N$}}
    \end{subfigure}
\caption{\small \textit{Representation of the height invariant for $R_1=1$, $R_2=2$. Lighter colours represent higher values than darker colours.}}
 \label{4X2height}
\end{figure}

The coefficients $\symbga_A$, $\symbga_B$, $\symbga_C$ and $\symbga_D$ are given by
\begin{align*}
 \symbga_A := & -R^2 (1-2 {s_1})^2 ({s_2}-1)^2+2 R \left(8 {s_1}^4-16 {s_1}^3+4 {s_1}^2 \left(3 {s_2}^2-3 {s_2}+2\right) \right.\\
 &\left.-12 {s_1} ({s_2}-1) {s_2}+{s_2} \left(8 {s_2}^3-16 {s_2}^2+7 {s_2}+1\right)\right)-(1-2 {s_1})^2 {s_2}^2 \\[0.2cm]
 \symbga_B := &  R^2 \left(4 {s_1}^4-8 {s_1}^3+4 {s_1}^2 \left(3 {s_2}^2-4 {s_2}+2\right)-4 {s_1} \left(3 {s_2}^2-4 {s_2}+1\right)\right.\\
 &\left.+({s_2}-1)^2 \left(4 {s_2}^2+1\right)\right)-2 R \left(4 {s_1}^4-8 {s_1}^3+4 {s_1}^2 \left({s_2}^2-{s_2}+1\right) \right.\\
 &\left.-4 {s_1} ({s_2}-1) {s_2}+{s_2} \left(4 {s_2}^3-8 {s_2}^2+3 {s_2}+1\right)\right)+4 {s_1}^4-8 {s_1}^3\\
 &+4 {s_1}^2 \left(3 {s_2}^2-2 {s_2}+1\right)+4 {s_1} {s_2} (2-3 {s_2})+{s_2}^2 \left(4 {s_2}^2-8 {s_2}+5\right) \\[0.2cm]
 \symbga_C := & -4 R^2 {s_1}^2 {s_2}^2+8 R^2 {s_1}^2 {s_2}-4 R^2 {s_1}^2+4 R^2 {s_1} {s_2}^2-8 R^2 {s_1} {s_2}+4 R^2 {s_1}\\
 &-R^2 {s_2}^2+2 R^2 {s_2}-R^2+8 R {s_1}^4-16 R {s_1}^3+8 R {s_1}^2 {s_2}^2-8 R {s_1}^2 {s_2}\\&+8 R {s_1}^2-8 R {s_1} {s_2}^2+8 R {s_1} {s_2}+8 R {s_2}^4-16 R {s_2}^3+6 R {s_2}^2+2 R {s_2}\\
 &+4 \sqrt{{\symbga_B}} \left(-{s_1}^2+{s_1}-{s_2}^2+{s_2}\right)-8 {s_1}^4+16 {s_1}^3-20 {s_1}^2 {s_2}^2+16 {s_1}^2 {s_2}\\
 &-8 {s_1}^2+20 {s_1} {s_2}^2-16 {s_1} {s_2}-8 {s_2}^4+16 {s_2}^3-9 {s_2}^2 \\[0.2cm]
 \symbga_D  :=& -8 R^2 {s_1}^4+16 R^2 {s_1}^3-20 R^2 {s_1}^2 {s_2}^2+24 R^2 {s_1}^2 {s_2}-12 R^2 {s_1}^2+20 R^2 {s_1} {s_2}^2\\&-24 R^2 {s_1} {s_2}+4 R^2 {s_1}-8 R^2 {s_2}^4+16 R^2 {s_2}^3-9 R^2 {s_2}^2+2 R^2 {s_2}-R^2\\
 &+4 R \sqrt{{\symbga_B}} \left(-{s_1}^2+{s_1}-{s_2}^2+{s_2}\right)+8 R {s_1}^4-16 R {s_1}^3+8 R {s_1}^2 {s_2}^2-8 R {s_1}^2 {s_2}\\
 &+8 R {s_1}^2-8 R {s_1} {s_2}^2+8 R {s_1} {s_2}+8 R {s_2}^4-16 R {s_2}^3+6 R {s_2}^2+2 R {s_2}-4 {s_1}^2 {s_2}^2\\
 &+4 {s_1} {s_2}^2-{s_2}^2.
\end{align*}
\label{theo:height}
\end{theo}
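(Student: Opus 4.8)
The plan is to realise each height as a reduced symplectic area. By Definition \ref{df:heiinv}, $h_1$ is $\tfrac{1}{2\pi}$ times the volume of $Y_1^-$, and in the coordinates $(q_2,p_2)$ of \S\ref{reducedNS}, where the reduced form is $\dee q_2\wedge \dee p_2$ and the focus-focus value $N\times S$ sits at $(l,h)=(0,0)$, this equals $\tfrac{1}{2\pi}$ times the area of $\{(q_2,p_2)\in \mbS^1\times[0,2] : \mcH^{NS}_0(q_2,p_2) < (1-2s_1)(1-2s_2)\}$ (using $R>1$, so the physical strip at $l=0$ is $p_2\in[0,2]$, of total area $4\pi$). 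First I would integrate over $q_2$: for fixed $p_2$ the inequality reads $\cos q_2 < C(p_2)$ with $C(p_2):=\big((1-2s_1)(1-2s_2)-\mcA^{NS}_0(p_2)\big)/\sqrt{\mcB^{NS}_0(p_2)}$, so the $q_2$-extent equals $\pi+2\arcsin C(p_2)$ where $|C|<1$ and equals $2\pi$ or $0$ where $C\ge 1$ or $C\le -1$. Since $\mcP^{NS}_0=\mcB^{NS}_0(1-C^2)$, the sign of $\mcP^{NS}_0$ decides between these regimes and its roots in $(0,2)$ furnish the limits of integration.

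The structural fact that makes the answer elementary, which I would exploit next, is that at the critical level the quartic degenerates: $\mcB^{NS}_0(p_2)=\tfrac{4(s_1+s_2-s_1^2-s_2^2)^2}{R^2}\,p_2^2\,(2R-p_2)(2-p_2)$ has a \emph{double} root at the nodal point $p_2=0$, and likewise $\mcP^{NS}_0(p_2)=p_2^2\,Q(p_2)$ for a quadratic $Q$. Hence $\sqrt{\mcB^{NS}_0}$ and $\sqrt{\mcP^{NS}_0}$ involve only square roots of quadratics, and no elliptic integrals arise. Writing the area as $2\pi\,\lvert\{C\ge1\}\rvert+\int_{\{|C|<1\}}(\pi+2\arcsin C)\,\dee p_2$ and integrating the $\arcsin$ term by parts, I reduce everything to integrals of the form $\int \tfrac{\mathrm{rational}(p_2)}{p_2\sqrt{(2R-p_2)(2-p_2)}}\,\dee p_2$, whose antiderivatives are arctangents and logarithms. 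Evaluating these between the roots of $\mcP^{NS}_0$ and simplifying assembles the three terms of $\mcF$ together with the coefficients $\symbga_A,\symbga_B,\symbga_C,\symbga_D$; as a built-in check, $\symbga_A=-E/R_1^2$ with $E$ as in \eqref{eq:E}, so $\sqrt{\symbga_A}$ is real precisely where $\nff=2$.

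The Heaviside term $2u\big((s_1-\tfrac12)(s_2-\tfrac{R}{R+1})\big)$ comes from the case distinction forced by the $q_2$-integration: depending on the sign of $(s_1-\tfrac12)(s_2-\tfrac{R}{R+1})$, the lobe does or does not sweep the full circle on a subinterval adjacent to the elliptic endpoint $p_2=2$, adding a rigid $2$ to the area divided by $2\pi$; tracking this sign across the region $\nff=2$ yields exactly the stated jump. For the second component I would invoke the symmetry $\Psi_2$ of Proposition \ref{proptrans4x2}, which sends $s_1\mapsto 1-s_1$, preserves $H$, and interchanges the reduced models for $N\times S$ and $S\times N$; being a symplectomorphism it preserves reduced areas, giving $h_2(s_1,s_2,R)=h_1(1-s_1,s_2,R)$. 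Because $\symbga_A,\dots,\symbga_D$ depend on $s_1$ only through $s_1(1-s_1)$ and $(1-2s_1)^2$ while the prefactors $(2s_1-1)$ are odd, $\mcF$ is odd under $s_1\mapsto 1-s_1$; combined with $u(x)+u(-x)=1$ this produces the quoted formula for $h_2$ and the relation $h_2=2-h_1$.

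The main obstacle I expect is not the reduction to elementary primitives but the bookkeeping at the roots of $\mcP^{NS}_0$: one must determine which real roots lie in $(0,2)$, on which subintervals $|C|<1$ holds, and the correct branch of each arctangent and logarithm, so that the boundary contributions collapse into the compact closed forms $\symbga_C,\symbga_D$ (which already contain $\sqrt{\symbga_B}$). Equivalently, the delicate point is to prove that the $\{C\ge 1\}$ interval contributes precisely the Heaviside jump and no spurious additive constant, and that the arctangent arguments take the stated rational-in-radicals form at the endpoints; this is the step whose algebra is confirmed in \emph{Mathematica}.
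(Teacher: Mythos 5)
Your proposal is correct and follows essentially the same route as the paper's proof: height as reduced symplectic area on the singular level $l=0$, fibrewise $q_2$-integration, the key degeneration $\mcP_0^{NS}(p_2)=p_2^2\,Q(p_2)/R^2$ with $Q$ quadratic so that only elementary primitives (arctangents and logarithms) arise, integration by parts, the sign analysis of $(s_1-\tfrac{1}{2})(s_2-\tfrac{R}{R+1})$ producing the Heaviside term, and the $s_1\mapsto 1-s_1$ symmetry for $h_2$, which the paper carries out as the ``completely analogous'' computation on the $S\times N$ reduced model after having set up Proposition \ref{proptrans4x2}. Two details in your sketch would self-correct upon execution: the integrals produced by the integration by parts are of the form $\int V(p_2)/\sqrt{Q(p_2)}\,\dee p_2$ with $V$ having simple poles at $p_2=2$ and $p_2=2R$ (not at $p_2=0$, and under the square root sits the shifted quadratic $Q$, not $(2R-p_2)(2-p_2)$); and the rigid jump of $2$ in $h_1$ is the sum of the full-circle strip $2\pi(2-\ze_3)$ \emph{and} the boundary term $2\pi\ze_3$ from the integration by parts when $C(\ze_3)=-1$ rather than $+1$ --- exactly the endpoint bookkeeping your last paragraph flags as the delicate step.
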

\begin{proof}
We start by focussing first on the singularity $N \times S$, i.e.\ $r=1$. We know that $(L,H)(m_1) = (1-R,(1-2s_1)(1-2s_2))$ and we want to compute the symplectic volume of 
$$Y_{1}^- := \{ p \in M\; |\; L(p)=1-R \text{ and } H(p)< (1-2s_1)(1-2s_2)\},$$  which is the area represented in Figure \ref{4X2actionsNS}, divided by $2\pi$. In the notation of \S \ref{reducedNS}, the singularity lies at $(l,h)=(0,0)$. This means that 
$$\mcH^{NS}_0(q_2,p_2)  = \mcA^{NS}_0(p_2) + \sqrt{\mcB^{NS}_0(p_2)} \cos(q_2),$$ where in this case
\begin{align*}
\mcA^{NS}_0(p_2) &:= \dfrac{1}{R} (1-2s_1)(R(1-2s_2)+p_2(s_2-R+R s_2)) \\
\mcB^{NS}_0(p_2) &:= \dfrac{4(s_1+s_2-{s_1}^2 - {s_2}^2)^2}{R^2} {p_2}^2(p_2-2R)(p_2-2)
\end{align*} and $\mcP_0^{NS}(p_2) := \mcB_0^{NS}(p_2) -\left( (1-2s_1)(1-2s_2)-\mcA_0^{NS}(p_2) \right)^2$. The phase space is given by $-\pi \leq q_2\leq \pi$ and $0 \leq p_2 \leq 2$. The roots of $\mcP_0^{NS}(p_2)$ are
\begin{align*}
\ze_1 &= \ze_2=0 \\
\ze_3 & = 1+R - \frac{\sqrt{\symbga_B}}{2({s_1}-{s_1}^2 + s_2 - {s_2}^2)}\\
\ze_4 & = 1+R + \frac{\sqrt{\symbga_B}}{2({s_1}-{s_1}^2 + s_2 - {s_2}^2)},
\end{align*} and the physical region lies between $\ze_2$ and $\ze_3$. There are two trivial cases, $s_1= \tfrac{1}{2}$ and $s_2 = \tfrac{R}{R+1}$. In both cases, $\mcP_0^{NS}(p_2) := \mcB^{NS}_0(p_2)$ and therefore the roots are $\ze_1=\ze_2=0$, $\ze_3=2$ and $\ze_4=2R$. These trivial cases form the border between two different behaviours, i.e., we distinguish the following situations:
\begin{multicols}{2}
\begin{itemize}
	\item Case I: $s_1<\tfrac{1}{2}$ and $s_2 < \tfrac{R}{R+1}$
	\item Case II: $s_1<\tfrac{1}{2}$ and $s_2 > \tfrac{R}{R+1}$
	\item Case III: $s_1=\tfrac{1}{2}$ or $s_2 = \tfrac{R}{R+1}$
	\item Case IV: $s_1>\tfrac{1}{2}$ and $s_2 < \tfrac{R}{R+1}$
	\item Case V: $s_1>\tfrac{1}{2}$ and $s_2 > \tfrac{R}{R+1}$
\end{itemize}
\end{multicols}

\begin{figure}[ht]
 \centering
  	\begin{subfigure}[b]{4cm}
        \includegraphics[width=4cm]{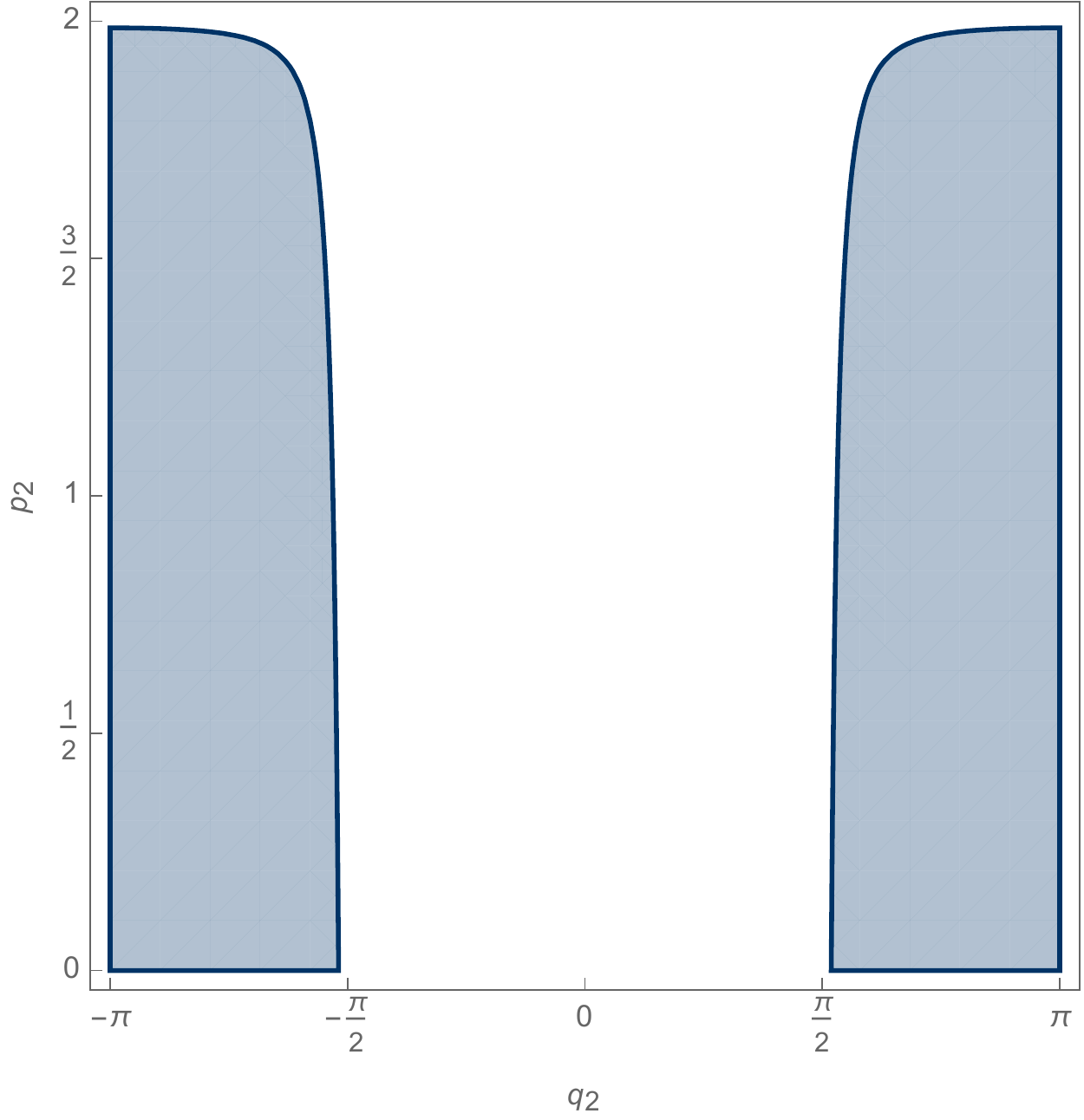}
        \caption*{$s_1 = \tfrac{1}{4}$, $s_2=\tfrac{3}{4}$}
    \end{subfigure}
    \hfill
        \begin{subfigure}[b]{4cm}
        \includegraphics[width=4cm]{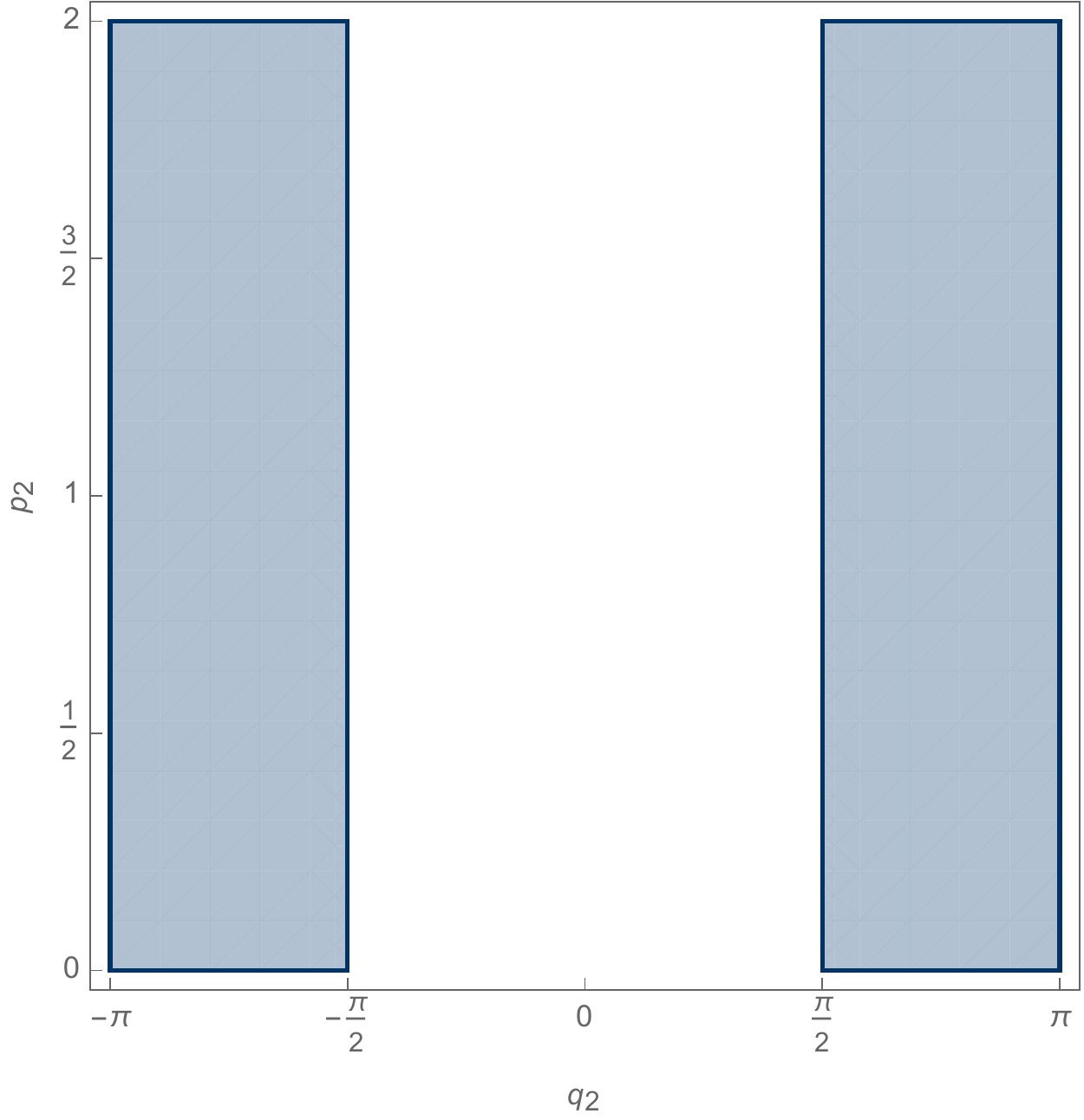}
        \caption*{$s_1 = \tfrac{1}{2}$, $s_2=\tfrac{3}{4}$}
    \end{subfigure}
    \hfill
    \begin{subfigure}[b]{4cm}
        \includegraphics[width=4cm]{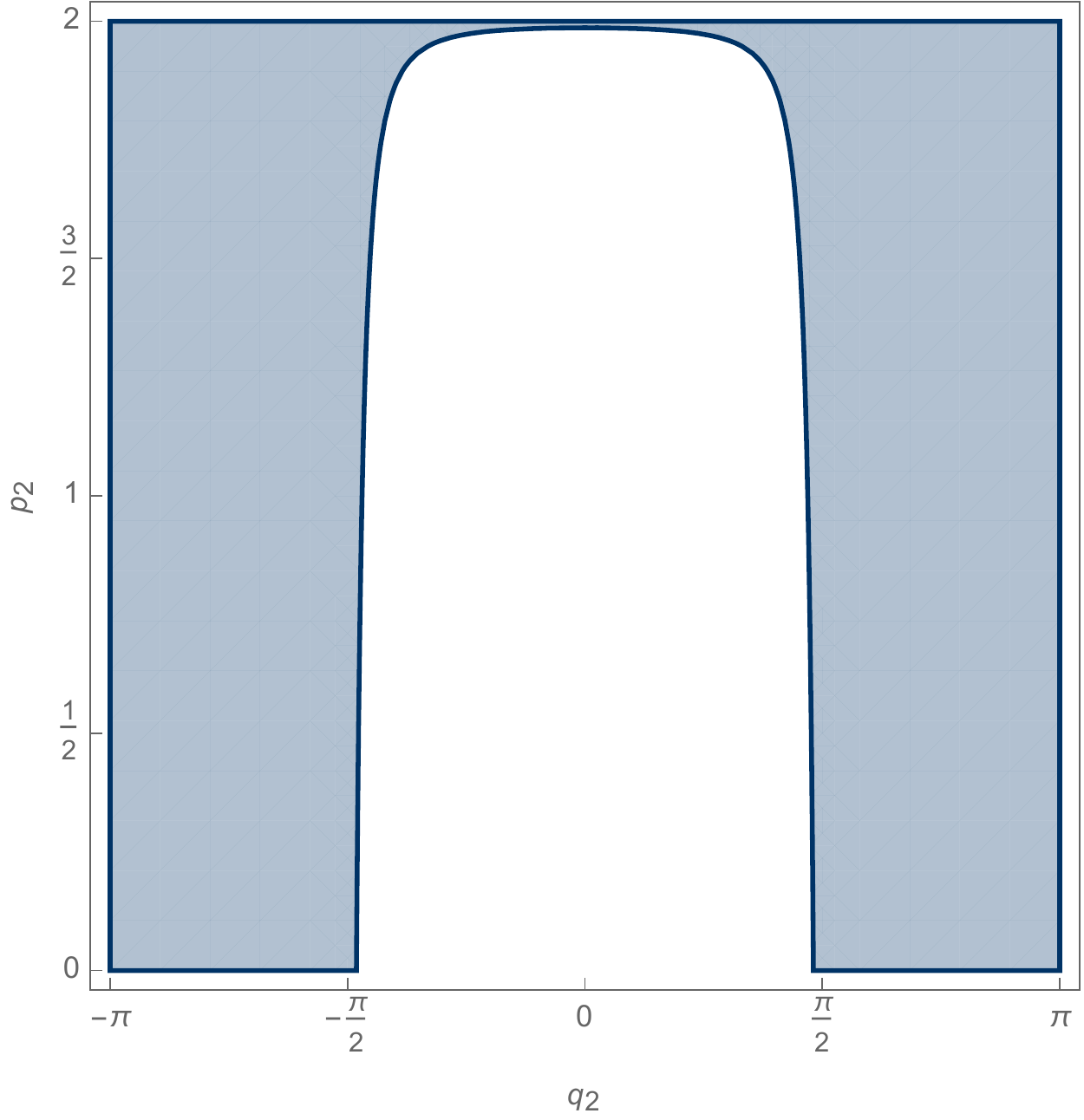}
        \caption*{$s_1 = \tfrac{3}{4}$, $s_2=\tfrac{3}{4}$}
    \end{subfigure}
    \\[0.4cm]
    \begin{subfigure}[b]{4cm}
        \includegraphics[width=4cm]{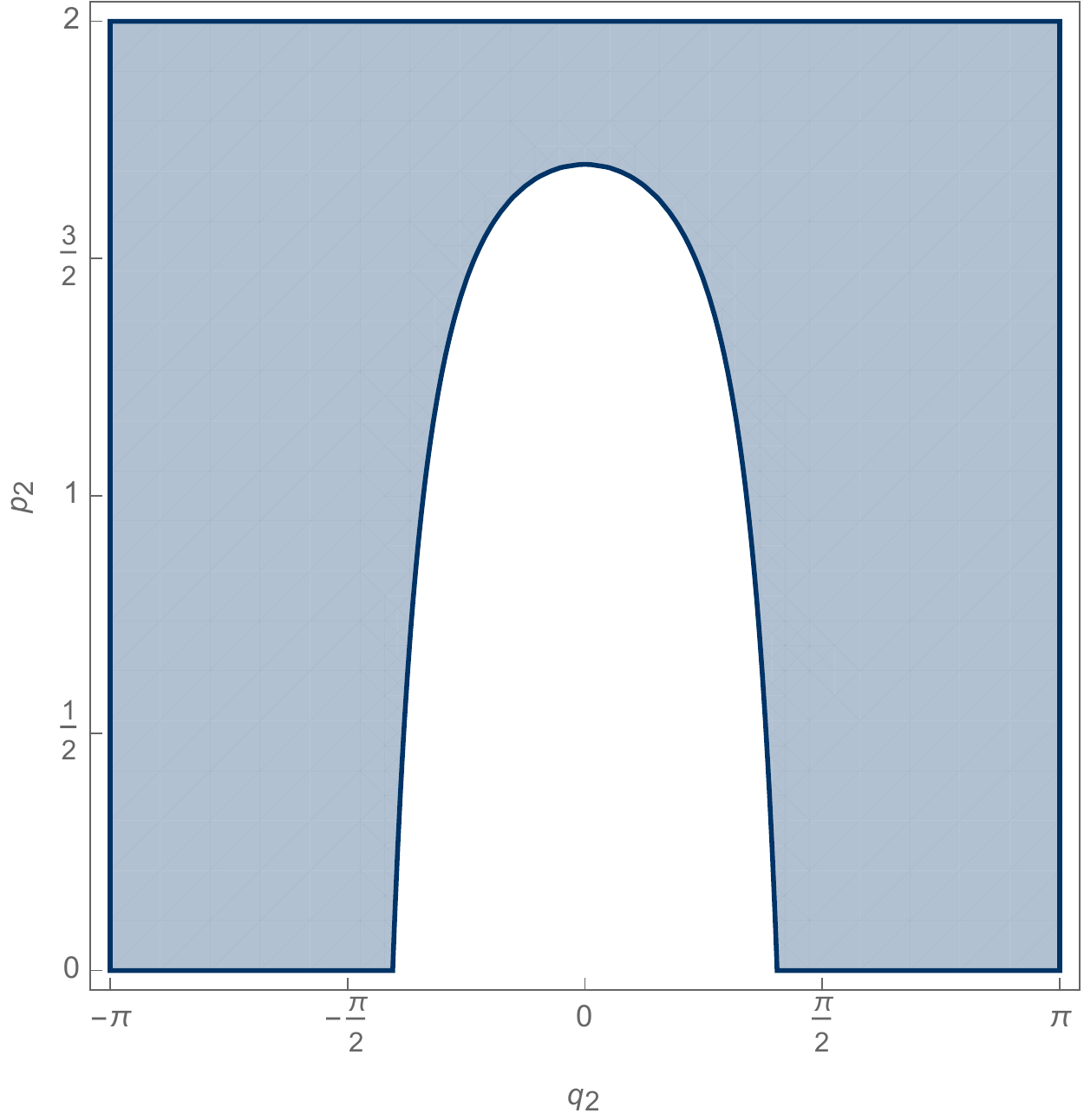}
        \caption*{$s_1 = \tfrac{1}{4}$, $s_2=\tfrac{1}{4}$}
    \end{subfigure}
    \hfill
        \begin{subfigure}[b]{4cm}
        \includegraphics[width=4cm]{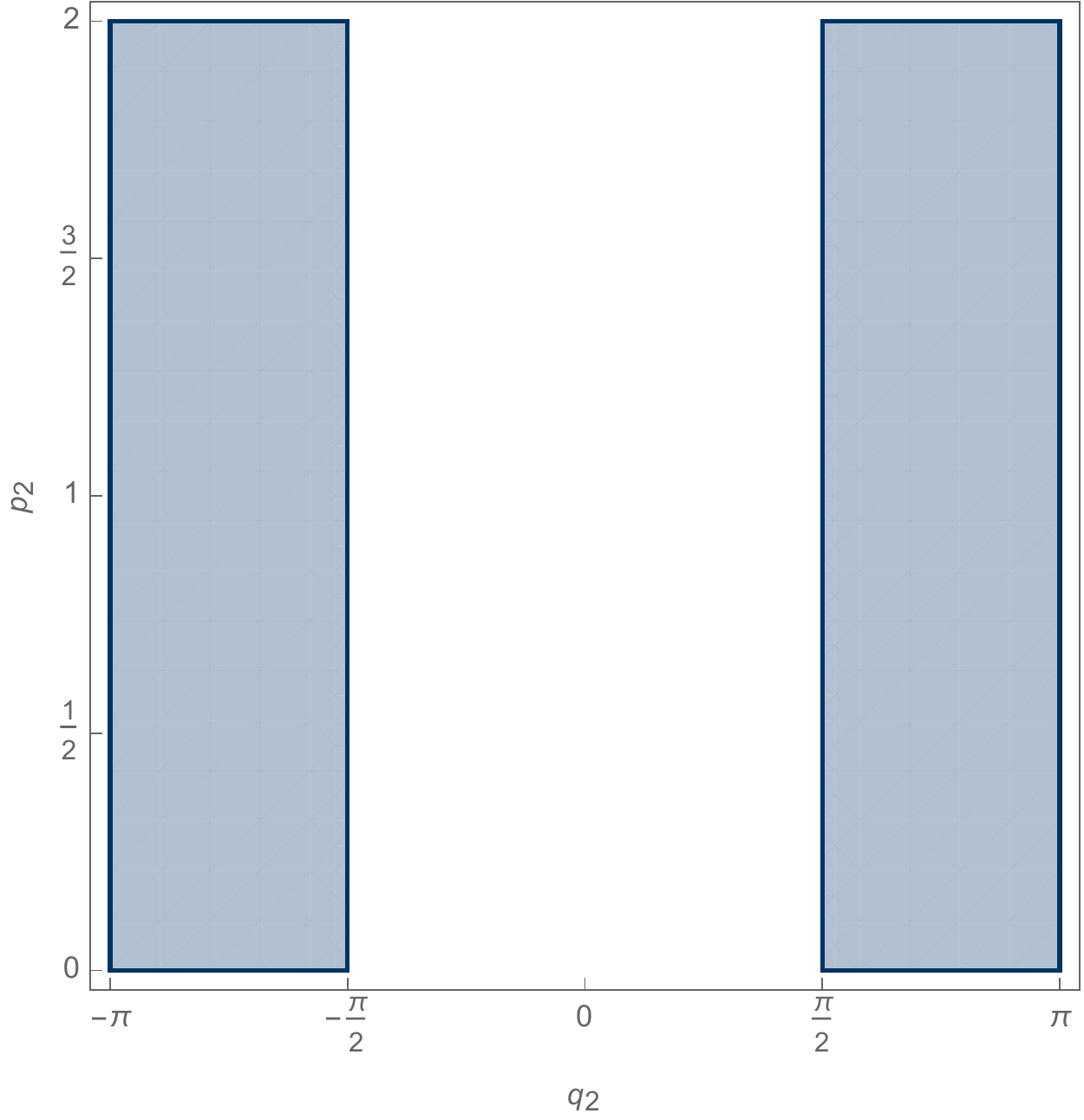}
        \caption*{$s_1 = \tfrac{1}{2}$, $s_2=\tfrac{1}{4}$}
    \end{subfigure}
    \hfill
    \begin{subfigure}[b]{4cm}
        \includegraphics[width=4cm]{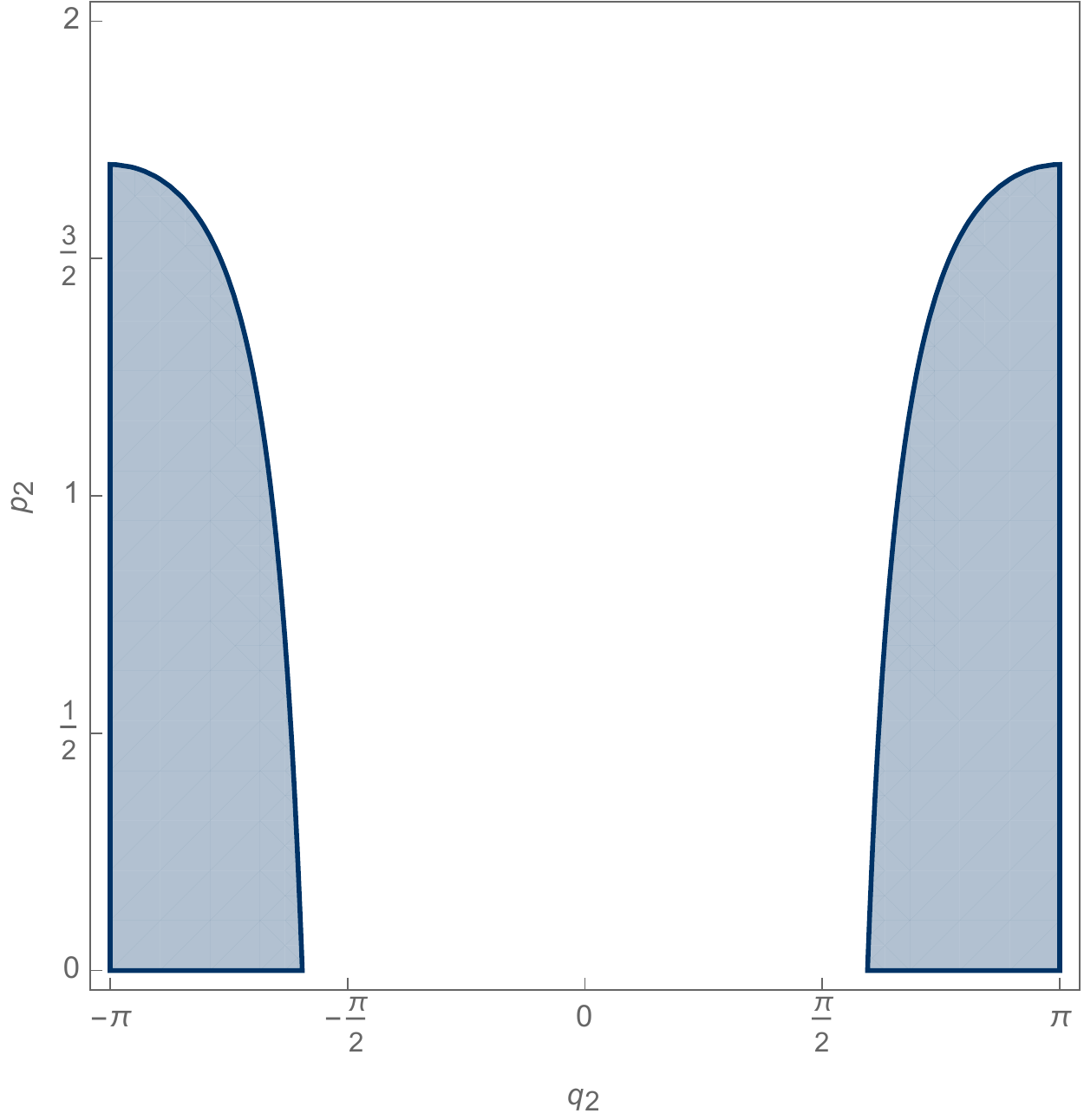}
        \caption*{$s_1 = \tfrac{3}{4}$, $s_2=\tfrac{1}{4}$}
    \end{subfigure}
\caption{\small \textit{Area corresponding to the height invariant of the singularity $N \times S$ for $R=2$.}}
 \label{4X2actionsNS}
\end{figure}
If we look at Figure \ref{4X2actionsNS}, we see that cases I and V, bottom-left and top-right respectively, are connected from above. Case III, in the centre, corresponds to the trivial transition situation. Cases II and IV, top-left and bottom-right respectively, are not connected from above. Therefore, for cases I and V we write
\begin{align*}
h_1 &= \dfrac{1}{2\pi} \left( 4\pi -  \oint_{h=0} \arccos \dfrac{ (1-2s_1)(1-2s_2)-\mcA^{NS}_0(p_2)}{\sqrt{\mcB^{NS}_0(p_2)}} \dee p_2   \right) \\&= \dfrac{1}{2\pi} \left( 4\pi-2 \int_{\ze_2}^{\ze_3}  \arccos \dfrac{ (1-2s_1)(1-2s_2)-\mcA^{NS}_0(p_2)}{\sqrt{\mcB^{NS}_0(p_2)}} \dee p_2 \right)
\end{align*} and for the cases II and IV we write 
\begin{align*}
h_1 &= \dfrac{1}{2\pi} \left( 4\pi -  \oint_{h=0} \arccos \dfrac{ (1-2s_1)(1-2s_2)-\mcA^{NS}_0(p_2)}{\sqrt{\mcB^{NS}_0(p_2)}} \dee p_2 -2\pi (2-\ze_3)  \right) \\&= \dfrac{1}{2\pi} \left( 4\pi-2 \int_{\ze_2}^{\ze_3}  \arccos \dfrac{ (1-2s_1)(1-2s_2)-\mcA^{NS}_0(p_2)}{\sqrt{\mcB^{NS}_0(p_2)}} \dee p_2 -2\pi (2-\ze_3)  \right).
\end{align*} 
\begin{figure}[ht]
 \centering
  	\begin{subfigure}[b]{4cm}
        \includegraphics[width=4cm]{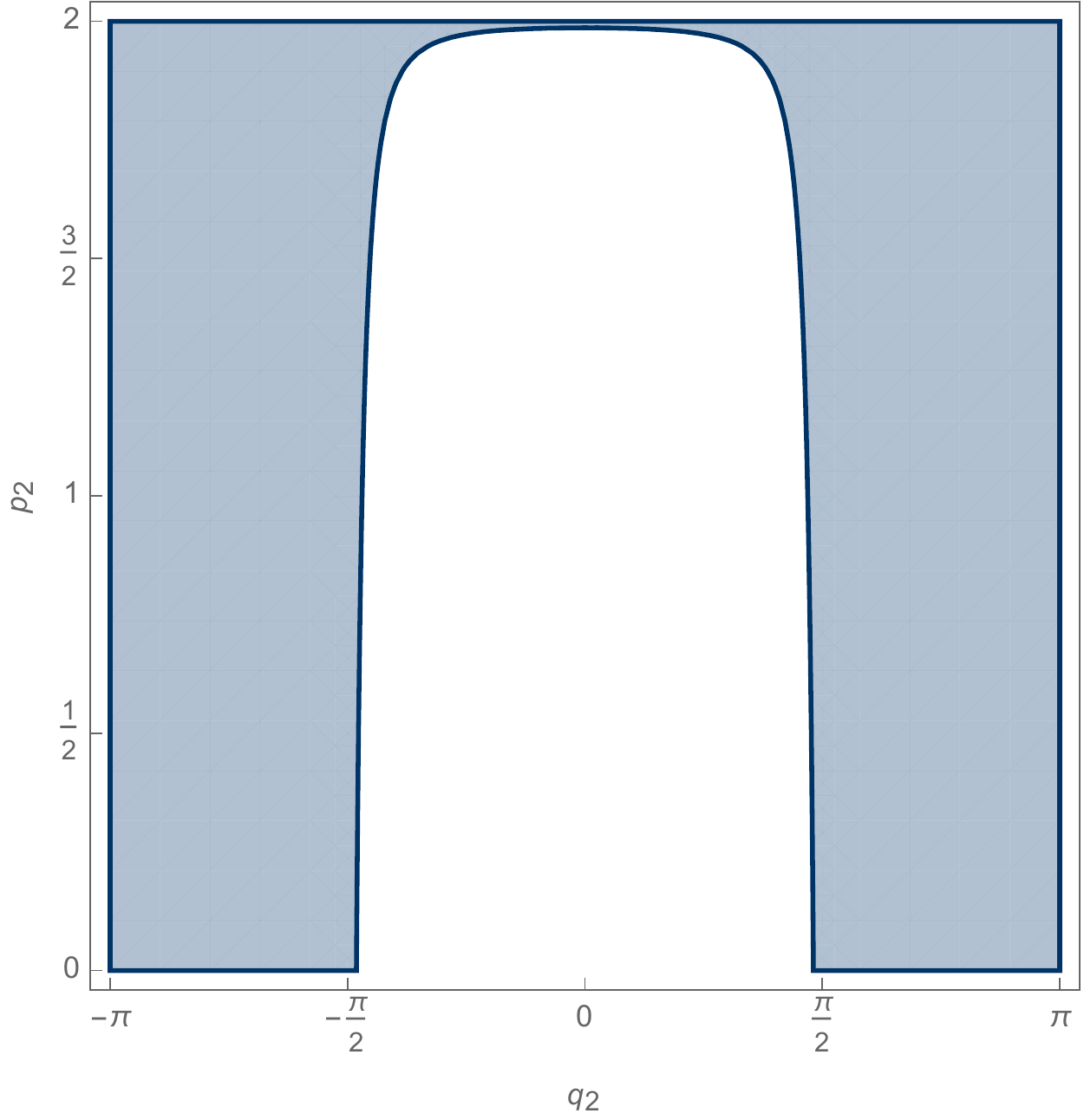}
        \caption*{$s_1 = \tfrac{1}{4}$, $s_2=\tfrac{3}{4}$}
    \end{subfigure}
    \hfill
        \begin{subfigure}[b]{4cm}
        \includegraphics[width=4cm]{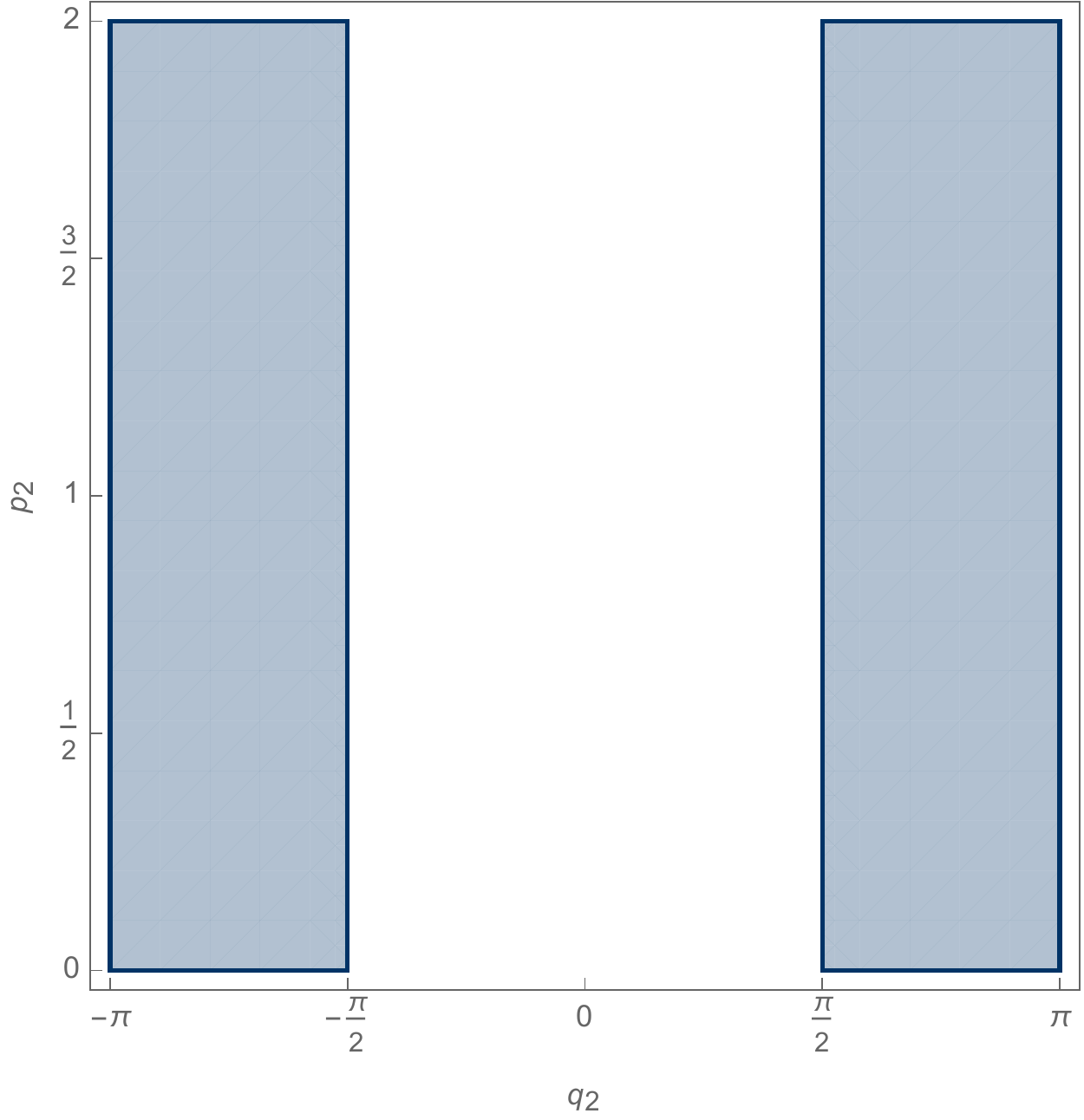}
        \caption*{$s_1 = \tfrac{1}{2}$, $s_2=\tfrac{3}{4}$}
    \end{subfigure}
    \hfill
    \begin{subfigure}[b]{4cm}
        \includegraphics[width=4cm]{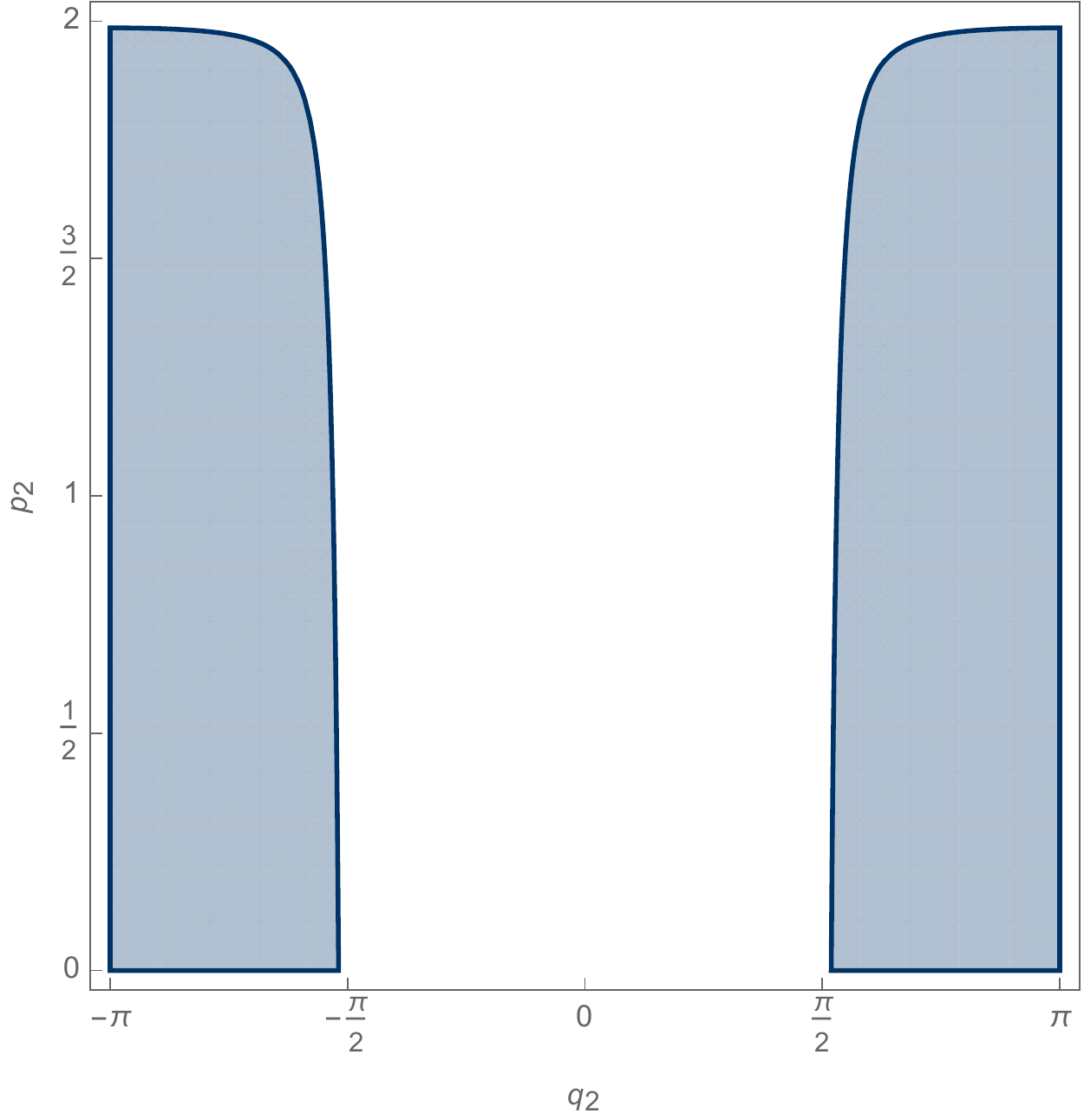}
        \caption*{$s_1 = \tfrac{3}{4}$, $s_2=\tfrac{3}{4}$}
    \end{subfigure}
    \\[0.4cm]
    \begin{subfigure}[b]{4cm}
        \includegraphics[width=4cm]{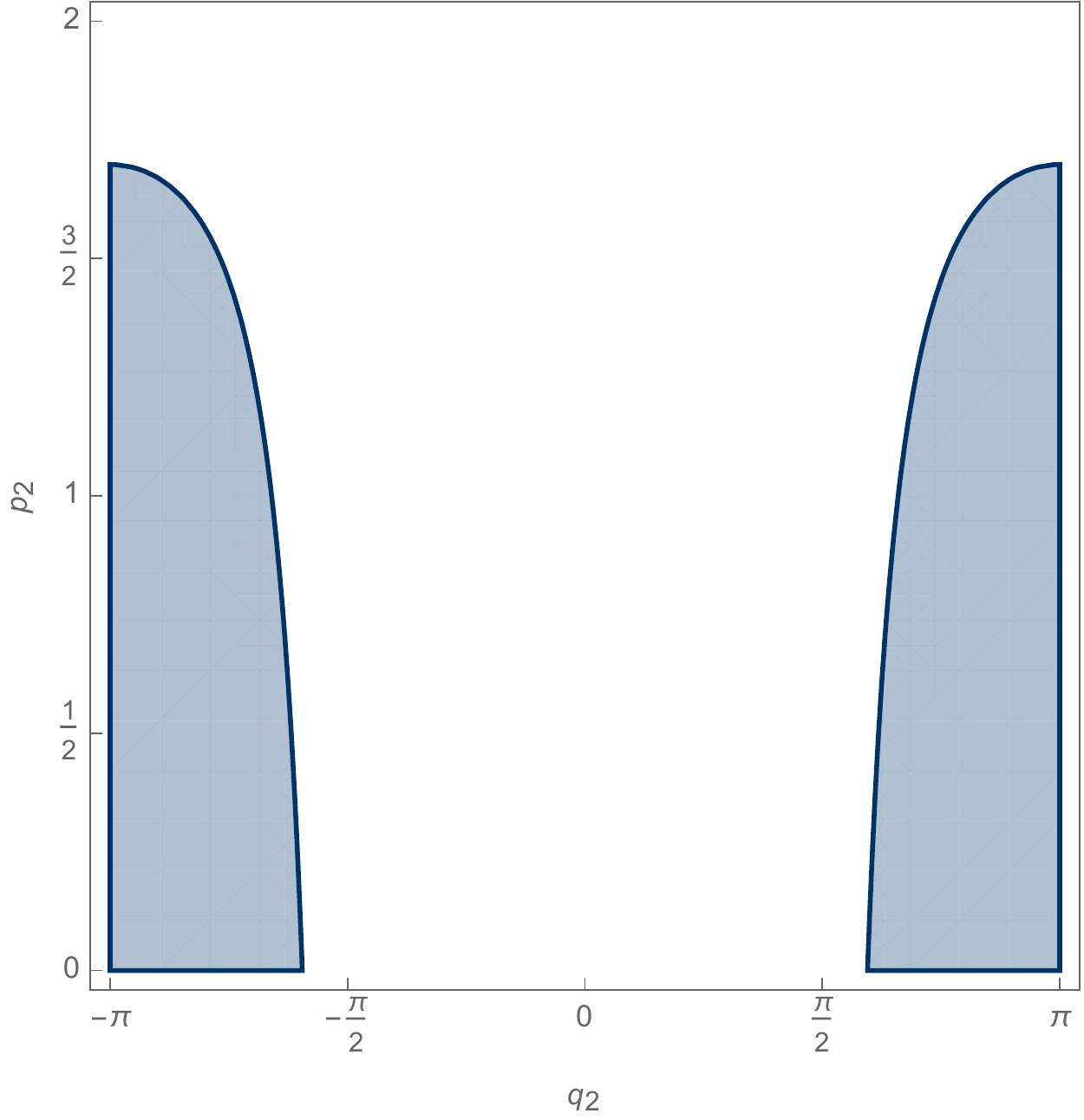}
        \caption*{$s_1 = \tfrac{1}{4}$, $s_2=\tfrac{1}{4}$}
    \end{subfigure}
    \hfill
        \begin{subfigure}[b]{4cm}
        \includegraphics[width=4cm]{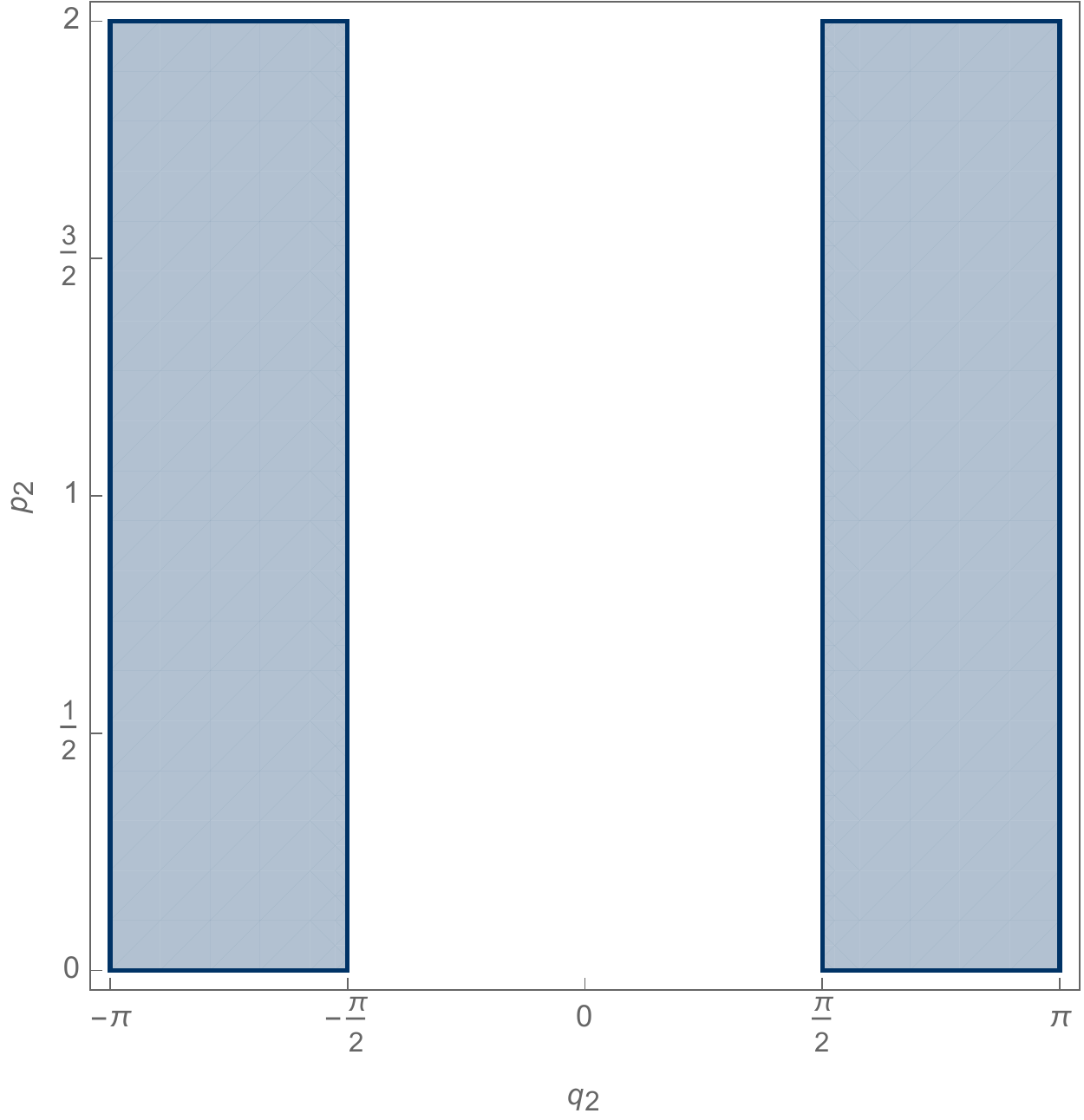}
        \caption*{$s_1 = \tfrac{1}{2}$, $s_2=\tfrac{1}{4}$}
    \end{subfigure}
    \hfill
    \begin{subfigure}[b]{4cm}
        \includegraphics[width=4cm]{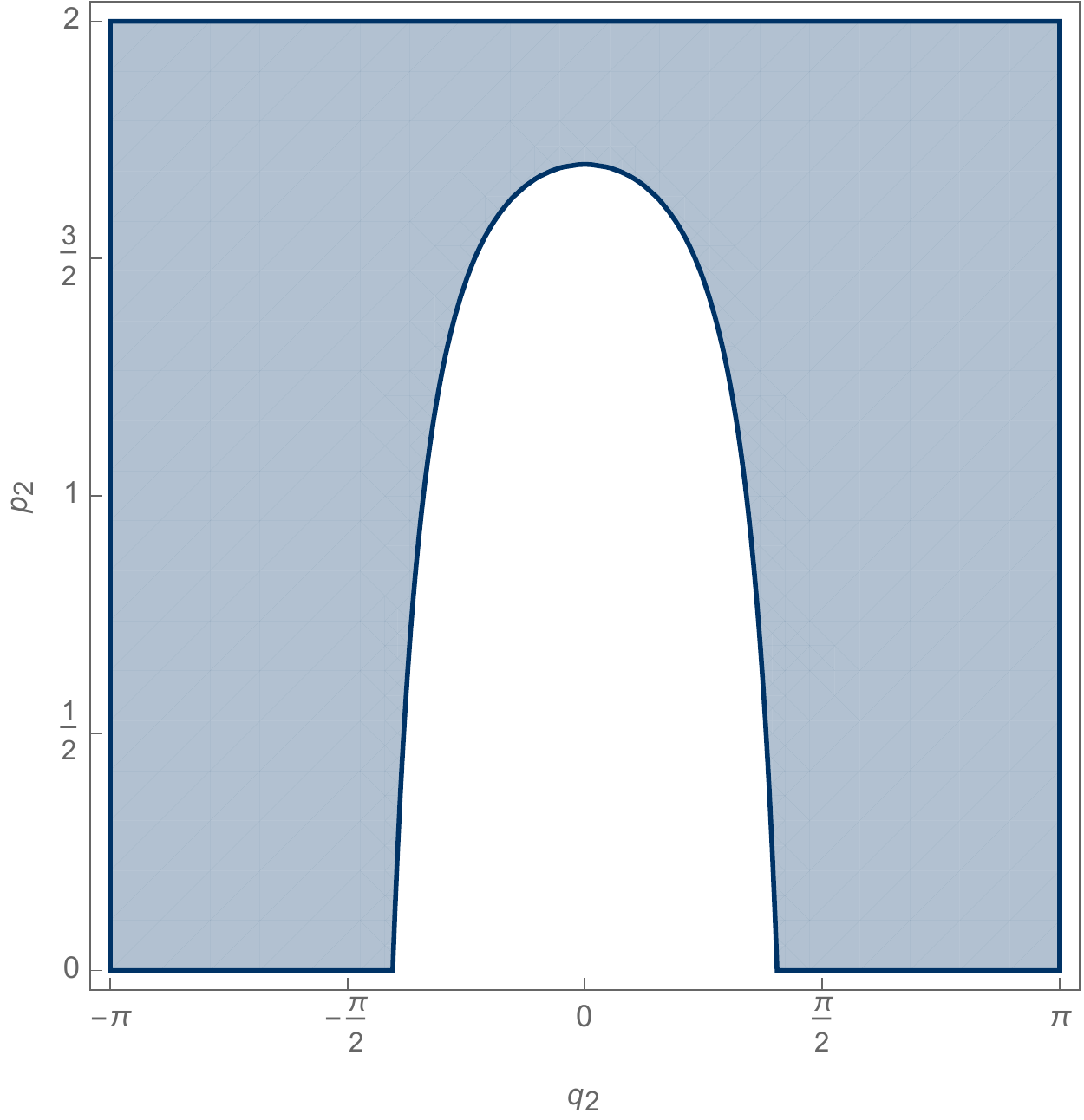}
        \caption*{$s_1 = \tfrac{3}{4}$, $s_2=\tfrac{1}{4}$}
    \end{subfigure}
\caption{\small \textit{Area corresponding to the height invariant of the singularity $S \times N$ for $R=2$.}}
 \label{4X2actionsSN}
\end{figure}

For the trivial case III we have
$$ h_1 = \dfrac{1}{2\pi} \left( \dfrac{1}{2} 4\pi \right) = 1.$$
As we did in for the general action integral in \S \ref{reducedNS}, we integrate by parts to compute this integral. For the cases I and V we obtain
$$ h_1 = \dfrac{1}{2\pi} \left( 4\pi - \mcF(s_1,s_2,R) \right)$$ and for the cases II and IV,
$$ h_1 = \dfrac{1}{2\pi} \left( -\mcF(s_1,s_2,R) \right),$$ where
\begin{equation}
\mcF(s_1,s_2,R) := \dfrac{2}{2\pi} \int_{\ze_2}^{\ze_3} \dfrac{V(p_2)}{\sqrt{Q(p_2)}} \dee p_2.
\label{int4X2}
\end{equation} The rational function $V(p_2)$ and the polynomial $Q(p_2)$ are given by
\begin{align*}
V(p_2) &=-(2 {s_1}-1) (R {s_2}-R+{s_2}) +\frac{-2 R {s_1} {s_2}+2 R {s_1}+R {s_2}-R-2 {s_1} {s_2}+{s_2}}{{p_2}-2} \\&+ \frac{-2 R^2 {s_1} {s_2}+2 R^2 {s_1}+R^2 {s_2}-R^2-2 R {s_1} {s_2}+R {s_2}}{{p_2}-2 R}\\
Q(p_2) &=4 ({p_2}-2) ({p_2}-2 R) (({s_1}-1) {s_1}+({s_2}-1) {s_2})^2\\&-(1-2 {s_1})^2 (R ({s_2}-1)+{s_2})^2.
\end{align*} Here it is important to observe that the polynomial $Q(p_2)$ is of degree 2 in $p_2$, so the integral \eqref{int4X2} is not elliptic but can be solved explicitly in terms of elementary functions. We will need the following definite integrals
\begin{align*}
\mcN_A(\al,\be,\symbga):=\int_0^{\tfrac{-\be-\sqrt{\be^2-4 \al \symbga}}{2\al}} \dfrac{\dee x}{\sqrt{\al x^2 + \be x + \symbga}} = \dfrac{1}{\sqrt{\al}} \log \left( \dfrac{- \sqrt{\be^2 - 4 \al \symbga}}{\be + 2\sqrt{\al \symbga}} \right) 
\end{align*} and 
\begin{align*}
\mcN_B(\al,\be,\symbga,\delta ):=&\int_0^{\tfrac{-\be-\sqrt{\be^2-4 \al \symbga}}{2\al}} \dfrac{\dee x}{(\delta-x)\sqrt{\al x^2 + \be x + \symbga}} \\=& \dfrac{1}{\sqrt{\symbga + \delta(\be + \al \delta)}} \log \left( \dfrac{-2\symbga - \be \delta + 2 \sqrt{\symbga^2 + \symbga \delta (\beta + \al \delta)}}{\delta \sqrt{\be^2-4 \al \symbga }} \right) \\
=& \dfrac{2}{\sqrt{-\symbga - \delta (\be + \al \delta)}} \arctan \left( \dfrac{2 \symbga + \delta \left( \be + \sqrt{\be^2-4\al \symbga} \right)}{2 \sqrt{-\symbga(\symbga + \delta(\be + \al \delta))}} \right).  
\end{align*} In the last step we have used the identity $\arctan(z) = \tfrac{i}{2} \left( \log(1-iz) - \log(1+iz) \right)$ with 
$$ z = -\dfrac{i\left( 2 \symbga + \delta \left( \be + \sqrt{\be^2-4\al \symbga} \right)\right)}{2 \sqrt{-\symbga(\symbga + \delta(\be + \al \delta))}}.$$
We rewrite now the integral \eqref{int4X2} as
\begin{align*}
\mcF(s_1,s_2,R) &=  \dfrac{1}{\pi} \int_0^{\tfrac{-\be-\sqrt{\be^2-4 \al \symbga}}{2\al}} \left( V_1 + \dfrac{V_2}{2-p_2} + \dfrac{V_3}{2R-p_2}\right) \dfrac{\dee p_2}{\sqrt{\al {p_2}^2 + \be p_2 + \symbga}}\\
&= \dfrac{1}{\pi} \left( V_1 \mcN_A(\al,\be,\symbga) + V_2 \mcN_B(\al,\be,\symbga,2) + V_3 \mcN_B(\al,\be,\symbga,2R) \right),
\end{align*} where
\begin{align*}
V_1 := & -(2 {s_1}-1) (R {s_2}-R+{s_2}) \\
V_2 := & -(-2 R {s_1} {s_2}+2 R {s_1}+R {s_2}-R-2 {s_1} {s_2}+{s_2}) \\
V_3 := & -(-2 R^2 {s_1} {s_2}+2 R^2 {s_1}+R^2 {s_2}-R^2-2 R {s_1} {s_2}+R {s_2})\\
\al := &\, 4 \left({s_1}^2-{s_1}+({s_2}-1) {s_2}\right)^2 \\
\be := & -8 (1 + R) (-{s_1} + {s_1}^2 + (-1 + {s_2}) {s_2})^2 \\
\symbga := & -R^2 (1-2 {s_1})^2 ({s_2}-1)^2+2 R \left(8 {s_1}^4-16 {s_1}^3+4 {s_1}^2 \left(3 {s_2}^2-3 {s_2}+2\right) \right. \\& \left. -12 {s_1} ({s_2}-1) {s_2}+{s_2} \left(8 {s_2}^3-16 {s_2}^2+7 {s_2}+1\right)\right)-(1-2 {s_1})^2 {s_2}^2.
\end{align*} By substituting $\mcN_A(\al,\be,\symbga)$ and $\mcN_B(\al,\be,\symbga,\delta)$ we obtain the desired result. The proof for the singularity $S \times N$ is completely analogous but taking into account that the cases II and V should be exchanged and the same for the cases I and IV. 
\end{proof}

\begin{co}
\label{co:rels}
The two components $(h_1, h_2)$ of the height invariant have an intricate dependence on the four parameters $s_1, s_1, R_1, R_2$ of the system but a very simple relation between each other, namely $h_2=2-h_1$.
\end{co}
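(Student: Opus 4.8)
The plan is to read both assertions of the corollary directly off the explicit formulas established in Theorem \ref{th:height}, since the relation $h_2 = 2-h_1$ is in fact already encoded in the structure of those formulas. The qualitative claim --- that each component depends intricately on $s_1,s_2,R_1,R_2$ --- needs no argument beyond pointing to the shape of $\mcF(s_1,s_2,R)$ together with the coefficients $\symbga_A,\symbga_B,\symbga_C,\symbga_D$ of Proposition \ref{prop:coeff}, which combine high-degree polynomials in $s_1,s_2,R$ with the radicals $\sqrt{\symbga_A},\sqrt{\symbga_B}$ sitting inside arctangent and logarithm terms. The substance of the corollary is therefore the simple relation between the two components, which I would establish by a one-line computation.

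First I would add the two expressions for $h_1$ and $h_2$ from Theorem \ref{th:height}. The two occurrences of $\mcF(s_1,s_2,R)$ enter with opposite signs and cancel, leaving
\[
h_1 + h_2 = 2\left[\, u\!\left((s_1-\tfrac12)(s_2-\tfrac{R}{R+1})\right) + u\!\left(-(s_1-\tfrac12)(s_2-\tfrac{R}{R+1})\right)\right].
\]
I would then invoke the elementary identity $u(x)+u(-x)=1$ for the Heaviside step function, which immediately yields $h_1+h_2=2$ and hence $h_2 = 2-h_1$. Since the heavy lifting --- the evaluation of the non-elliptic integral \eqref{int4X2} producing $\mcF$ --- is already carried out in the proof of Theorem \ref{th:height}, essentially nothing further is required, and the corollary is \emph{immediate}.

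The only point demanding care, and the closest thing to an obstacle, is the behaviour on the locus where the arguments of the step functions vanish, i.e.\ the trivial Case III of the proof of Theorem \ref{th:height}, where $s_1=\tfrac12$ or $s_2=\tfrac{R}{R+1}$. There I would adopt the symmetric convention $u(0)=\tfrac12$, so that $u(0)+u(0)=1$ still holds, and cross-check against the value computed directly in that case, namely $h_1=h_2=1$, which indeed satisfies $h_2=2-h_1$. Conceptually, I would also remark that this constancy of the sum reflects the symmetry $\Psi_2$ of Proposition \ref{proptrans4x2}, under which $s_1\mapsto 1-s_1$ interchanges the two focus-focus singularities $N\times S$ and $S\times N$; this is precisely why the two heights are governed by the \emph{same} function $\mcF$ (with opposite signs) and why their sum is forced to be a parameter-independent constant.
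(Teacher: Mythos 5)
Your proposal is correct and takes essentially the same route as the paper: the paper offers no separate argument for Corollary~\ref{co:rels}, since the relation $h_2=2-h_1$ is already written into the formulas of Theorem~\ref{th:height} and the corollary simply reads it off, exactly as you do. Your explicit steps --- the cancellation of the $\mcF$ terms, the identity $u(x)+u(-x)=1$, and the check of the degenerate locus $s_1=\tfrac12$ or $s_2=\tfrac{R}{R+1}$ against the directly computed value $h_1=h_2=1$ --- merely make precise what the paper leaves implicit.
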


We can finally extend our results to the case $R_1>R_2$:

\begin{co}
The transformation $\Psi_3$ extends Theorem \ref{th:semit}, Corollary \ref{co:nff}, Theorem \ref{th:poly} and Theorem \ref{th:height} to the case $R_1 > R_2$.
\end{co}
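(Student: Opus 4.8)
The plan is to use $\Psi_3$ to transport the entire analysis of the regime $R_2 > R_1$ onto the regime $R_1 > R_2$, so that no computation has to be redone. The transformation $\Psi_3$ is an involution of the parameter space that exchanges $R_1 \leftrightarrow R_2$ and sends $s_2 \mapsto 1-s_2$ while fixing $s_1$; by Proposition \ref{proptrans4x2} it is realised by a symplectomorphism of $(M,\om)$ satisfying $\Psi_3^*(L,H) = (L,H)$, and hence, by Corollary \ref{co:trans}, it is a semitoric isomorphism. In particular it preserves semitoricity, the number of focus-focus points, the polygon invariant class, and the height invariant (the latter two up to the relabelling induced on the focus-focus points, cf.\ Theorem \ref{th:class}).

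First I would observe that if a system \eqref{sysdef} has $R_1 > R_2$, then its $\Psi_3$-image is again a system of the form \eqref{sysdef} whose first and second radii are $R_2$ and $R_1$ respectively, so that its radius ratio equals $R_1/R_2 > 1$. The image therefore satisfies the standing hypothesis $R_2 > R_1$ under which Theorem \ref{th:semit}, Corollary \ref{co:nff}, Theorem \ref{th:poly} and Theorem \ref{th:height} were proved. Applying each of these results to the image and then pulling the conclusion back along the isomorphism $\Psi_3$ yields the corresponding statement for the original system. Concretely, the explicit data for $R_1 > R_2$ are obtained from the stated formulas by the substitution $(R_1,R_2,s_2)\mapsto(R_2,R_1,1-s_2)$, equivalently $R \mapsto 1/R$ and $s_2 \mapsto 1-s_2$.

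Two bookkeeping points must be handled with care, and these --- rather than any genuinely hard computation --- are the real content of the proof. The first is the labelling of the focus-focus points: since $\Psi_3$ swaps the two spheres, it maps the singularity $N\times S$ to $S\times N$ and vice versa, so it interchanges the two components of the height invariant. One must therefore match $h_1$ of the original system with the $S\times N$-component of the image and $h_2$ with its $N\times S$-component, and likewise permute the cutting directions $\epsilon=(\epsilon_1,\epsilon_2)$ among the polygon representatives of Figure \ref{ff:2FFpolygons}. The second is a consistency check on the explicit expressions: a short direct computation shows that $E$ in \eqref{eq:E} is invariant under $(R_1,R_2,s_2)\mapsto(R_2,R_1,1-s_2)$, so the degeneracy curve $E=0$ and the count $\nff$ take exactly the same form in both regimes; similarly, under this substitution $\tfrac{R}{R+1}$ becomes $1-\tfrac{R}{R+1}$, whence the Heaviside argument $(s_1-\tfrac{1}{2})(s_2-\tfrac{R}{R+1})$ changes sign, which is precisely what is needed for the two height components to be exchanged consistently with $h_2 = 2 - h_1$. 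The main obstacle is thus organisational rather than analytic: verifying that all conventions --- orientation, the choice of which sphere is reduced in Sections \ref{reducedNS}--\ref{reducedSN}, and the sign choices defining the polygon --- transform coherently under $\Psi_3$, so that the pulled-back invariants are genuinely the invariants of the original $R_1 > R_2$ system.
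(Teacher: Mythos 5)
Your proposal is correct and follows essentially the same route as the paper, which states this corollary without any written proof, relying precisely on Proposition \ref{proptrans4x2} and Corollary \ref{co:trans} to transport Theorems \ref{th:semit}, \ref{th:poly}, \ref{th:height} and Corollary \ref{co:nff} along the semitoric isomorphism induced by $\Psi_3$, whose image system satisfies the standing hypothesis $R_2>R_1$. Your extra bookkeeping --- the interchange of the $N\times S$ and $S\times N$ components, the invariance of $E$ under $(R_1,R_2,s_2)\mapsto(R_2,R_1,1-s_2)$, and the sign change of the Heaviside argument under $R\mapsto 1/R$, $s_2\mapsto 1-s_2$ --- is a correct elaboration of what the paper leaves implicit.
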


Note that the condition $R_1 = R_2$ results in a non-simple semitoric system, which lies outside the scope of the present work.

%%%%%%%%%%%%%%%%%%%%%%%%%%%%%%%%%%%%%%%%%%%%%%%%%%%%%%%%%%%%%%%%%%%%%%%%%
%%%%%%%%%%%%%%  new section  %%%%%%%%%%%%%%%%%%%%%%%%%%%%%%%%%%%%%%%%%%%%
%%%%%%%%%%%%%%%%%%%%%%%%%%%%%%%%%%%%%%%%%%%%%%%%%%%%%%%%%%%%%%%%%%%%%%%%%%%

\section*{Acknowledgements}

We would like to thank Joseph Palmer, Holger Dullin, and Marine Fontaine for helpful discussions and Wim Vanroose for sharing his computational resources. The authors have been partially funded by the FWO-EoS project G0H4518N and the UA-BOF project with Antigoon-ID 31722.

%%%%%%%%%%%%%%%%%%%%%%%%%%%%%%%%%%%%%%%%%%%%%%%%%%%%%%%%%%%%%%%%%%%%%%%%%
%%%%%%%%%%%%%%  new section  %%%%%%%%%%%%%%%%%%%%%%%%%%%%%%%%%%%%%%%%%%%%
%%%%%%%%%%%%%%%%%%%%%%%%%%%%%%%%%%%%%%%%%%%%%%%%%%%%%%%%%%%%%%%%%%%%%%%%%%%

\bibliographystyle{abbrv}
\bibliography{HEI-Refs}

\begin{thebibliography}{10}

\bibitem{ADH}
J.~Alonso, H.~R. Dullin, and S.~Hohloch.
\newblock Taylor series and twisting-index invariants of coupled
  spin-oscillators.
\newblock {\em Journal of Geometry and Physics}, 140:131 -- 151, 2019.

\bibitem{ADH2}
J.~Alonso, H.~R. Dullin, and S.~Hohloch.
\newblock Symplectic classification of coupled angular momenta.
\newblock {\em Nonlinearity}, 33(417), 2020.

\bibitem{AH}
J.~Alonso and S.~Hohloch.
\newblock Survey on recent developments in semitoric systems.
\newblock {\em RIMS Kokyuroku}, 2137, 2019.

\bibitem{Ar}
V.~I. Arnold.
\newblock A theorem of {L}iouville concerning integrable problems of dynamics.
\newblock {\em Sibirsk. Mat. Z.}, 4:471--474, 1963.

\bibitem{AEJ}
E.~Ass\'emat, K.~Efstathiou, M.~Joyeux, and D.~Sugny.
\newblock Fractional bidromy in the vibrational spectrum of {HOCl}.
\newblock {\em Phys. Rev. Lett.}, 104:113002, 2010.

\bibitem{At}
M.~F. Atiyah.
\newblock Convexity and commuting {H}amiltonians.
\newblock {\em Bull. London Math. Soc.}, 14(1):1--15, 1982.

\bibitem{BF}
P.~F. Byrd and M.~D. Friedman.
\newblock {\em Handbook of elliptic integrals for engineers and physicists}.
\newblock Springer-Verlag, Berlin-G\"ottingen-Heidelberg, 1954.

\bibitem{CWT}
M.~S. Child, T.~Weston, and J.~Tennyson.
\newblock Quantum monodromy in the spectrum of h2o and other systems: new
  insight into the level structure of quasi-linear molecules.
\newblock {\em Molecular Physics}, 96(3):371--379, 1999.

\bibitem{DMH2}
A.~{De Meulenaere} and S.~Hohloch.
\newblock A family of semitoric systems with four focus-focus singularities and
  two double pinched tori.
\newblock {\em preprint arXiv:1911.11883}, 2019.

\bibitem{Del}
T.~Delzant.
\newblock Hamiltoniens p\'eriodiques et images convexes de l'application
  moment.
\newblock {\em Bull. Soc. Math. France}, 116(3):315--339, 1988.

\bibitem{Du1}
J.~J. Duistermaat.
\newblock On global action-angle coordinates.
\newblock {\em Comm. Pure Appl. Math.}, 33(6):687--706, 1980.

\bibitem{El1}
L.~H. Eliasson.
\newblock {\em Hamiltonian systems with Poisson commuting integrals}.
\newblock PhD thesis, University of Stockholm, 1984.

\bibitem{El2}
L.~H. Eliasson.
\newblock Normal forms for {H}amiltonian systems with {P}oisson commuting
  integrals---elliptic case.
\newblock {\em Comment. Math. Helv.}, 65(1):4--35, 1990.

\bibitem{FWP}
N.~J. Fitch, C.~A. Weidner, L.~P. Parazzoli, H.~R. Dullin, and H.~J.
  Lewandowski.
\newblock Experimental demonstration of classical hamiltonian monodromy in the
  $1\ensuremath{\mathbin:}1\ensuremath{\mathbin:}2$ resonant elastic pendulum.
\newblock {\em Phys. Rev. Lett.}, 103:034301, 2009.

\bibitem{GS}
V.~Guillemin and S.~Sternberg.
\newblock Convexity properties of the moment mapping.
\newblock {\em Invent. Math.}, 67(3):491--513, 1982.

\bibitem{HP}
S.~Hohloch and J.~Palmer.
\newblock A family of compact semitoric systems with two focus-focus
  singularities.
\newblock {\em J. Geom. Mech.}, 10(3):331--357, 2018.

\bibitem{JC}
E.~T. {Jaynes} and F.~W. {Cummings}.
\newblock Comparison of quantum and semiclassical radiation theories with
  application to the beam maser.
\newblock {\em Proceedings of the IEEE}, 51(1):89--109, 1963.

\bibitem{LFPa}
Y.~{Le Floch} and J.~Palmer.
\newblock Semitoric families.
\newblock {\em preprint arXiv:1810.06915}, 2018.

\bibitem{LFPe}
Y.~{Le Floch} and {\'A}.~{Pelayo}.
\newblock {Symplectic geometry and spectral properties of classical and quantum
  coupled angular momenta}.
\newblock {\em Journal of Nonlinear Science}, 2018.

\bibitem{MV}
E.~Miranda and S.~V\~{u}~Ng\d{o}c.
\newblock A singular {P}oincar\'{e} lemma.
\newblock {\em Int. Math. Res. Not.}, (1):27--45, 2005.

\bibitem{MZ}
E.~Miranda and N.~T. Zung.
\newblock A note on equivariant normal forms of {P}oisson structures.
\newblock {\em Math. Res. Lett.}, 13(5-6):1001--1012, 2006.

\bibitem{Ne}
N.~N. Nekhoroshev.
\newblock Action-angle variables, and their generalizations.
\newblock {\em Tr. Mosk. Mat. Obs.}, 26:181--198, 1972.

\bibitem{PPT}
J.~Palmer, A.~Pelayo, and X.~Tang.
\newblock Semitoric systems of non-simple type.
\newblock {\em preprint arXiv:1909.03501}, 2019.

\bibitem{Pe}
{\'A}.~{Pelayo}.
\newblock Symplectic invariants of semitoric systems and the inverse problem
  for quantum systems.
\newblock {\em preprint arXiv:2002.05306}, 2020.

\bibitem{PT}
{\'A}.~{Pelayo} and X.~Tang.
\newblock V\~u {N}g\d{o}c's conjecture on focus-focus singular fibers with
  multiple pinched points.
\newblock {\em preprint arXiv:1803.00998 [math.SG]}, 2018.

\bibitem{PV1}
{\'A}.~{Pelayo} and S.~V\~u Ng\d{o}c.
\newblock Semitoric integrable systems on symplectic 4-manifolds.
\newblock {\em Invent. Math.}, 177(3):571--597, 2009.

\bibitem{PV4}
{\'A}.~{Pelayo} and S.~V\~u Ng\d{o}c.
\newblock Constructing integrable systems of semitoric type.
\newblock {\em Acta Math.}, 206(1):93--125, 2011.

\bibitem{PV3}
{\'A}.~{Pelayo} and S.~V\~u Ng\d{o}c.
\newblock Hamiltonian dynamics and spectral theory for spin-oscillators.
\newblock {\em Comm. Math. Phys.}, 309(1):123--154, 2012.

\bibitem{SZ}
D.~A. Sadovskii and B.~I. Zhilinskii.
\newblock Monodromy, diabolic points, and angular momentum coupling.
\newblock {\em Phys. Lett. A}, 256(4):235--244, 1999.

\bibitem{SZM}
D.~A. Sadovskii, B.~I. Zhilinskii, and L.~Michel.
\newblock Collapse of the {Z}eeman structure of the hydrogen atom in an
  external electric field.
\newblock {\em Phys. Rev. A}, 53:4064--4067, 1996.

\bibitem{Vu1}
S.~V\~u Ng\d{o}c.
\newblock On semi-global invariants for focus-focus singularities.
\newblock {\em Topology}, 42(2):365--380, 2003.

\bibitem{Vu2}
S.~V\~u Ng\d{o}c.
\newblock Moment polytopes for symplectic manifolds with monodromy.
\newblock {\em Adv. Math.}, 208(2):909--934, 2007.

\bibitem{VW}
S.~V\~{u}~Ng\d{o}c and C.~Wacheux.
\newblock Smooth normal forms for integrable {H}amiltonian systems near a
  focus-focus singularity.
\newblock {\em Acta Math. Vietnam.}, 38(1):107--122, 2013.

\bibitem{Ve}
J.~Vey.
\newblock Sur certains syst\`emes dynamiques s\'eparables.
\newblock {\em Amer. J. Math.}, 100(3):591--614, 1978.

\bibitem{WWM}
B.~P. Winnewisser, M.~Winnewisser, I.~R. Medvedev, M.~Behnke, F.~C. De~Lucia,
  S.~C. Ross, and J.~Koput.
\newblock Experimental confirmation of quantum monodromy: The millimeter wave
  spectrum of cyanogen isothiocyanate {NCNCS}.
\newblock {\em Phys. Rev. Lett.}, 95:243002, 2005.

\bibitem{Z2}
N.~T. Zung.
\newblock Symplectic topology of integrable {H}amiltonian systems. {II}.
  {T}opological classification.
\newblock {\em Compositio Math.}, 138(2):125--156, 2003.

\end{thebibliography}

{\small
  \noindent
  \\[-0.2cm]
  \textbf{Jaume Alonso}\\
  University of Antwerp\\
  Department of Mathematics\\
  Middelheimlaan 1\\
  B-2020 Antwerpen, Belgium\\
  {\em E\--mail}: \texttt{jaume.alonsofernandez@uantwerpen.be}\\[-0.1cm]
}

{\small
  \noindent
  \textbf{Sonja Hohloch}\\
  University of Antwerp\\
  Department of Mathematics\\
  Middelheimlaan 1\\
  B-2020 Antwerpen, Belgium\\
  {\em E\--mail}: \texttt{sonja.hohloch@uantwerpen.be}
}
\end{document}